\tikzset{my loop/.style =  {to path={
  \pgfextra{}
  [looseness=12,min distance=10mm]
  \tikz@to@curve@path},font=\sffamily\small
  }}  
\theoremstyle{plain}
\newtheorem{theorem}{Theorem}[section]
\newtheorem{lemma}[theorem]{Lemma}
\newtheorem{corollary}[theorem]{Corollary}
\newtheorem{proposition}[theorem]{Proposition}
\theoremstyle{definition}
\newtheorem{remark}[theorem]{Remark}
\newtheorem{definition}[theorem]{Definition}
\newtheorem{question}[theorem]{Question}
\DeclareMathOperator{\SL}{SL}
\DeclareMathOperator{\Hom}{Hom}
\DeclareMathOperator{\Aut}{Aut}
\newcommand{\C}{\mathbb{C}}
\newcommand{\s}{\mathfrak{s}}
\title{
Eigenperiods and the moduli of points in the line
}
\author{Haohua Deng}
\address[H.D.]{Department of mathematics, Duke University, Durham, NC, 27708}
\email{haohua.deng@duke.edu}
\author{Patricio Gallardo}
\address[P.G.]{Department of mathematics, University of California at Riverside, Riverside, CA, 92521}
\email{pgallard@ucr.edu}
\begin{document}
\begin{abstract}
We study the period map of configurations of $n$ points on the projective line constructed via a cyclic cover branching along these points. By considering the decomposition of its Hodge structure into eigenspaces, we establish the codimension of the locus where the eigenperiod map is still pure. Furthermore, we show that the period map extends to the divisors of a specific moduli space of weighted stable rational curves, and that this extension satisfies a local Torelli map along its fibers.
\end{abstract}
\maketitle
\section{Introduction}
Understanding period maps and Torelli-type results that relate geometric and Hodge-theoretic compactifications of moduli spaces of varieties is a central problem within algebraic geometry.  A classic example of such results is due to Deligne and Mostow \cite{DM86}. They showed that certain geometric compactifications of the moduli space for up to 12 partially labeled points in $\mathbb{P}^1$, constructed via Geometric Invariant Theory (GIT), are isomorphic to a Hodge-theoretic compactification of the image of their period domain - the Baily-Borel compactification. 
Recently, the Deligne-Mostow isomorphisms have gained much attention, with efforts to extend such isomorphisms to other geometric and Hodge-theoretic  compactifications of such moduli spaces of points \cite{gallardo2021geometric},
to understand their birational geometry
\cite{HM22}, \cite{hulek2024compactifications},  
as well as applications to other moduli spaces, such as curves and cubic surfaces \cite{Loo24}, \cite{casalaina2023birational}.

Yet, whether the Deligne-Mostow results can be partially generalized to an arbitrary number of points remains unknown. To understand the difficulties, let us recall the construction of the period map for $n$ points in $\mathbb{P}^1$. It begins with a cyclic cover $C$ of degree $d$ branching along these points. The Galois group of this finite cover lifts to both the cohomology of and the Hodge structure on $C$. Thus, the action of the Galois group induces an eigenspace decomposition on $H^1(C)$ indexed by an integer $k$.  One can restrict the Hodge structure of $C$ to the $k$-th eigenspace to construct a potential eigenperiod map characterized by the data $(d,k)$. Two important problems arise: for an arbitrary number of points the potential eigenperiod map is not dominant and the monodromy representation may not even be discrete. Nowadays, it is understood that many interesting period maps are dominant into their period domains.  McMullen described conditions on $(d,k)$ ensuring that the associated monodromy representation is discrete, see Theorem \ref{thm:Mcmullen} and Table \ref{tab:ancestorCases}. Therefore, it is possible to define a family of eigenperiod maps that contain and generalize the Deligne-Mostow cases, as seen in Proposition \ref{prop:EigenPeriodM0n}.

This paper is devoted to studying two partial extensions of eigenperiod maps mentioned above. 
This is possible because there is a geometric smooth normal crossing compactification of the moduli of $n$ labeled points on the line known as $\overline{M}_{0,n}$. The first partial extension comes from the classical fact in Hodge theory that we may extend the eigenperiod map to boundary divisors with finite monodromy. The second partial extension is known as Kato--Usui's compactification
\cite{KU08}, which aligns with the toroidal compactification introduced by \cite{AMRT10} in the classical case.

We begin with a proposition that should be clear to an expert in Hodge theory. 

\begin{proposition}
\label{prop:extension}
Let $n \geq 5$ and $k/d$ be as specified in Table \ref{tab:ancestorCases}, and suppose that $ \mathrm{gcd}(d,n-1)=1$ holds. There exists an eigenperiod map coming from the family of cyclic cover of $\mathbb{P}^1$ branched over marked points, see Proposition \ref{prop:EigenPeriodM0n},      
\[
\Phi_{n,d,k}:M_{0,n} \to
\Gamma_{n,d,k}\backslash D_{n,d,k}
\]
such that 
\begin{itemize}
    \item[(i)]  There exists a maximal partial compactification $M_{n,d,k}^{\mathrm{pr}} \subset \overline{M}_{0,n}$ over which $\Phi_{n,d,k}$ extends, that is 
    \[
    \Phi_{n,d,k}:M_{n,d,k}^{\mathrm{pr}} \to
    \Gamma_{n,d,k}\backslash D_{n,d,k}.
    \]
    \item[(ii)]
    The space $M_{n,d,k}^{\mathrm{pr}}$ is characterized by the following property: For any $p\in \overline{M}_{0,n}$, $p\in M_{n,d,k}^{\mathrm{pr}}$ if and only if for any $1$-dimensional contractible neighboorhood $p\in U\subset \overline{M}_{0,n}$ with $U-\{p\}\subset M_{0,n}$, the variation of Hodge structure associated to $\Phi_{n,d,k}|_{U-\{p\}}$ has finite monodromy. 
    
\end{itemize}
\end{proposition}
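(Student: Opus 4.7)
The approach rests on a classical fact from Hodge theory: Griffiths' removable singularities theorem in its normal-crossing form (closely related to Borel's extension theorem) asserts that a period map $\Phi : X \setminus D \to \Gamma \backslash \mathcal{D}$, with $X$ smooth and $D$ a simple normal crossing divisor, extends holomorphically across a point $p \in D$ if and only if every small loop in $X \setminus D$ around $p$ has finite order monodromy. Since $\overline{M}_{0,n}$ is smooth with simple normal crossing boundary and the eigenperiod map $\Phi_{n,d,k}$ was constructed in Proposition \ref{prop:EigenPeriodM0n} from the $k$-th eigenspace of the Galois action on $H^1$ of the family of cyclic covers, this theorem applies directly.

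The plan is to define $M_{n,d,k}^{\mathrm{pr}}$ as the set of $p \in \overline{M}_{0,n}$ such that for every irreducible boundary divisor passing through $p$, the corresponding local monodromy of $\Phi_{n,d,k}$ acts with finite order on the $k$-th eigenspace. First I would compute these local monodromies explicitly, via a Picard--Lefschetz analysis of the cyclic cover as marked points collide along a boundary stratum $D_I$; the resulting order depends only on $(d, k, |I|)$. The set $M_{n,d,k}^{\mathrm{pr}}$ is open in $\overline{M}_{0,n}$ and contains $M_{0,n}$ by construction, and invoking the removable singularities theorem at each of its boundary points produces the extension in (i). Maximality follows from the converse direction of the same theorem: any point to which the map extends must have finite local monodromy.

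For the characterization (ii), suppose $p$ lies on the intersection of boundary divisors $D_1, \ldots, D_r$ with commuting local monodromies $T_1, \ldots, T_r$ on the $k$-th eigenspace. A $1$-dimensional contractible $U \subset \overline{M}_{0,n}$ with $U \setminus \{p\} \subset M_{0,n}$ meets each $D_i$ at $p$ with some multiplicity $a_i \geq 1$, and the generator of $\pi_1(U \setminus \{p\}) \cong \mathbb{Z}$ has monodromy $T_1^{a_1} \cdots T_r^{a_r}$; conversely, any tuple of positive integers can be realized this way by an appropriate choice of holomorphic disk. If this product has finite order for every admissible tuple, then comparing the value at $(1,\ldots,1)$ with the value at $(1,\ldots,1) + e_i$ and using commutativity shows that each $T_i$ has finite order, which gives the desired equivalence with the defining condition of $M_{n,d,k}^{\mathrm{pr}}$.

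The main obstacle is the monodromy computation described in the second paragraph: making the action on the $k$-th eigenspace explicit enough to recognize when it is finite amounts to a number-theoretic condition in $(d, k, |I|)$, and the hypothesis $\gcd(d, n-1) = 1$ is exactly what one needs to control the boundary divisor where $n-1$ marked points collide. Everything else is a bookkeeping application of the removable singularities theorem and the elementary fact that commuting operators whose arbitrary products of nonnegative exponents all have finite order must themselves have finite order.
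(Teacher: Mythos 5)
Your proposal is correct and follows essentially the same route as the paper: define $M_{n,d,k}^{\mathrm{pr}}$ by finiteness of the local monodromies around the boundary divisors of the simple normal crossing compactification $\overline{M}_{0,n}$ and invoke Griffiths' extension theorem (\cite[Theorem 9.5]{Gri70}) for both the extension and its maximality. Your explicit verification of (ii) via disks meeting the divisors with multiplicities $a_i$ and the observation that finiteness of all products $T_1^{a_1}\cdots T_r^{a_r}$ forces finiteness of each commuting $T_i$ is a detail the paper leaves implicit, and the Picard--Lefschetz computation you flag as the main obstacle is not actually needed for this proposition (it enters only in Theorem \ref{thm:codimensions}).
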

We remark the necessity of the condition $\mathrm{gcd}(d, n-1)=1$: It secures the existence of a branched covering map $\mathcal{C}\rightarrow \mathbb{P}^1$ of generic degree $d$, and the ramification loci is the union of $n$ distinct points on $\mathbb{P}^1$, each of which has ramification degree $d-1$. Such a branched covering map does not exist if $\mathrm{gcd}(d, n-1)>1$, as by projective automorphism we may arrange one of the $n$ points to be the infinity and each of the $n-1$ finite branched points has ramification degree $d-1$, then the infinity will have ramification degree less than $d-1$. See \cite[Sec. 3]{McM13}.

The locus $M_{n,d,k}^{\mathrm{pr}}$ plays important roles. From the moduli point of view, a point $p\in \overline{M}_{0,n}$ gives a stable curve in $\overline{M}_{g}$ via the cyclic $d$-cover, see \cite{fedorchuk2011cyclic}. The resultant family of stable curves over $\overline{M}_{0,n}$ induces a moduli map
\begin{center}
    $\mathcal{F} : \overline{M}_{0,n}\rightarrow \overline{M}_{g}$.
\end{center}
The locus on $\overline{M}_{0,n}$ whose image under $\mathcal{F}$ produces stable curves of compact type, i.e., whose dual graph is a tree, is exactly $\cap_k M_{n,d,k}^{\mathrm{pr}}$. The spaces $M_{n,d,k}^{\mathrm{pr}}$ are also relevant to Hodge theory because the limiting mixed Hodge structures (LMHS) obtained from the local period maps $\Phi_{n,d,k}$ over $\Delta$ around $p \in M_{n,d,k}^{\mathrm{pr}}$ are pure as a direct consequence of finite local monodromy. 
We say that
$M_{n,d,k}^{\mathrm{pr}}$ is the $(d,k)$-pure locus, or simply pure locus if no confusion were caused.

In our first main Theorem below, we will give precise combinatorial conditions for a point $p\in \overline{M}_{0,n}$ to locate in $M_{n,d,k}^{\mathrm{pr}}$.  
Additionally, we calculate the codimension of the complement of the ``pure" loci as established in Proposition  \ref{prop:extension} within a pertinent GIT compactification.

For the next statement recall that the $\overline{M}_{0,n}$ is a smooth normal crossing compactification of $M_{0,n}$ whose irreducible divisors are labeled as $D_{I}$ with $I \subset \{1, \ldots, n\}$ with $2\leq |I|\leq \lfloor\frac{n}{2}\rfloor$. A generic point in $D_I$ parametrizes a stable curve with two irreducible components supporting the points $p_i$ with $i \in I$ and $p_{j}$ with $j \in I^c$ - the complement of $I$.
\begin{theorem}\label{thm:codimensions}
Let $M_{n,d,k}^{\mathrm{pr}}$ be as in Proposition \ref{prop:extension}, then 
\begin{align*}
M_{n,d,k}^{\mathrm{pr}} = 
\overline{M}_{0,n}
\setminus
\bigcup_{ I \notin \mathcal{I}^{pr} } 
D_{I}
&& 
\mathcal{I}^{pr} := 
\{ I \; : \;
k\min\{ |I|, |I^c| \} \not\equiv 0 \ \text{mod} \ (d) 
\; \}\subset 2^{\mathcal{I}}.
\end{align*}

\noindent Moreover, let
$
\overline{M}^{\mathrm{\mathrm{\mathrm{GIT}}}}_{0, \mathbf{w}} $ be the GIT compactification relative to the weights
$ \mathbf{w} = (\frac{2}{n}, ..., \frac{2}{n})$
if $d|n$, and 
$
\mathbf{w} = 
(\frac{1}{n-1}+\epsilon, ..., \frac{1}{n-1}+\epsilon, 1-(n-1)\epsilon )
$
if  $d \nmid n$ and 
$\mathrm{gcd}(d,n-1)=1$. 
Let  
$
\pi: \overline{M}_{0,n} \longrightarrow \overline{M}^{\mathrm{\mathrm{\mathrm{GIT}}}}_{0, \mathbf{w}} 
$ be the reduction morphism, define
\begin{align*}
H(n,d,k):=\mathrm{codim}\left( 
\overline{M}^{\mathrm{\mathrm{\mathrm{GIT}}}}_{0, \mathbf{w}} 
\setminus
\pi(M_{n,d,k}^{\mathrm{pr}})
\right),
\end{align*}
then it holds that for $d|n$, 
\begin{align*}
H(n,d,k) = \left\{ \begin{array}{rcl}
 \frac{d}{\mathrm{gcd}(k,d)}-1 & \mbox{\textrm{if}}
& \textrm{gcd}(k,d)>\frac{2d}{n} \\ n-3 & \mbox{\textrm{if}} & \textrm{gcd}(k,d)=\frac{2d}{n} \\ \infty & \mbox{\textrm{if}} & \textrm{gcd}(k,d)<\frac{2d}{n}
\\
\end{array}\right..    
\end{align*}
On the other hand, for $d\nmid n$ and $\mathrm{gcd}(d,n-1)=1$, 
\begin{align*}
H(n,d,k) = \left\{ \begin{array}{rcl}
 \frac{d}{\mathrm{gcd}(k,d)}-1 & \mbox{\textrm{if}}
& \textrm{gcd}(k,d)>\frac{d}{n-2} \\ n-3 & \mbox{\textrm{if}} & \textrm{gcd}(k,d)=\frac{d}{n-2} \\ \infty & \mbox{\textrm{if}} & \textrm{gcd}(k,d)<\frac{d}{n-2}
\\
\end{array}\right.    
\end{align*}
Note that if the codimension is infinite, it implies that $\pi(M_{n,d,k}^{\mathrm{pr}})=\overline{M}^{\mathrm{\mathrm{\mathrm{GIT}}}}_{0, \mathbf{w}}$.
\end{theorem}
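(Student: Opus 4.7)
The plan is to prove Theorem \ref{thm:codimensions} in two stages.

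For the first assertion (the combinatorial description of $M_{n,d,k}^{\mathrm{pr}}$), I invoke Proposition \ref{prop:extension}(ii) to reduce the question to a local monodromy computation at a generic point of each boundary divisor $D_I$. Near such a point the family of cyclic covers degenerates to a nodal cover with $\gcd(d, s_A)$ nodes, where $s_A \equiv \sum_{i\in I} a_i \pmod{d}$ is the total weight on the $I$-side; the Galois $\mathbb{Z}/d\mathbb{Z}$-action cyclically permutes these nodes. Decomposing the Picard--Lefschetz monodromy formula into $\chi_k$-isotypic components, one sees that the unipotent part of the monodromy on the $\chi_k$-eigenspace vanishes precisely when $d\nmid k\,s_A$. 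Using the normalization of weights $a_i=1$ for $i\le n-1$ and (when $d\nmid n$) $a_n\equiv -(n-1)\pmod{d}$, this translates into the combinatorial condition $k\min(|I|,|I^c|)\not\equiv 0\pmod{d}$ characterizing $\mathcal{I}^{\mathrm{pr}}$, and hence $M_{n,d,k}^{\mathrm{pr}}=\overline{M}_{0,n}\setminus\bigcup_{I\notin\mathcal{I}^{\mathrm{pr}}} D_I$.

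For the second assertion, set $e:=d/\gcd(k,d)$; the excluded boundary divisors are exactly those $D_I$ with $e\mid\min(|I|,|I^c|)$, and the smallest admissible cardinality is $|I|=e$. The complement $\overline{M}^{\mathrm{GIT}}_{0,\mathbf{w}}\setminus\pi(M_{n,d,k}^{\mathrm{pr}})$ consists of those $q$ whose entire $\pi$-fiber lies in $\bigcup_{I\notin\mathcal{I}^{\mathrm{pr}}} D_I$. Using Hassett's description of the reduction morphism $\pi$ for the specified weights, I would analyze which excluded divisors are contracted and compute the dimensions of their images. In the symmetric case ($d\mid n$, $\mathbf{w}=(2/n,\dots,2/n)$), a divisor $D_I$ with $|I|<n/2$ is contracted to a stratum of codimension $|I|-1$ parametrizing an $|I|$-point cluster; at $|I|=n/2$ the cluster is strictly semistable and $S$-equivalence collapses all such divisors to a single point of codimension $n-3$. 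In the asymmetric case ($d\nmid n$), the analysis is analogous but uses the asymmetric instability: divisors with $n\notin I$ have the light side contracted (codimension $|I|-1$ in the image), while divisors with $n\in I$ have the $I^c$-side contracted (codimension $n-|I|-1$), and at the threshold $|I|=2$ with $n\in I$ the image collapses to dimension zero.

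Combining these, the three cases follow. When $\gcd(k,d)$ strictly exceeds the weight-dependent threshold ($2d/n$ in the symmetric case, $d/(n-2)$ in the asymmetric case), the minimal codimension is achieved by the $|I|=e$ cluster and equals $H=e-1$. At the threshold, the corresponding image collapses to a single point (by $S$-equivalence in the symmetric case, or by asymmetric contraction of the unstable side in the asymmetric case), producing codimension $n-3$. Above the threshold no admissible excluded $D_I$ exists, so $\pi(M_{n,d,k}^{\mathrm{pr}})=\overline{M}^{\mathrm{GIT}}$ and $H=\infty$. Finally one verifies that larger clusters ($|I|=2e,3e,\dots$) and intersections of several excluded divisors contribute only in strictly larger codimension, so they do not lower the minimum.

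The main obstacle is the threshold case, especially in the asymmetric setting. Here one must carefully describe the special structure of $\pi$ arising from the single heavy weight, identify the full $\pi$-preimage of the low-dimensional image inside the nested boundary stratification of $\overline{M}_{0,n}$, and verify that this preimage is covered entirely by excluded boundary divisors. This bookkeeping --- rather than any single local Hodge-theoretic computation --- is the delicate piece of the argument.
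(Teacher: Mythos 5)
Your overall strategy is sound, and the two halves of your argument relate to the paper's proof differently. For the purity criterion you take a genuinely different route: you work at the generic point of $D_I$ in the nodal ($\overline{M}_{0,n}$) model and decompose the Picard--Lefschetz formula for the $\gcd(d,s_I)$ cyclically permuted nodes of the cover into $\chi_k$-isotypic pieces, whereas the paper works in the colliding-points (GIT/Hassett) model, computes the $(1,1)$-part of the $k$-eigenspace of the vanishing cohomology of the quasi-homogeneous singularity $y^d+x^l$ via the Jacobian ring and the function $l(\beta)$ (Lemma \ref{lemma:residue}, Corollary \ref{cor:GrWlb}, Corollary \ref{maincorollary}), and then converts $h^{1,1}_k(V_f)=0$ into purity of the LMHS via the vanishing-cycle and Clemens--Schmid sequences (Lemmas \ref{lemma:OnlyOneSing} and \ref{lemma:criteriaPure}). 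Both routes are legitimate and give $d\nmid k\,s_I$ as the finiteness condition; if you pursue yours, you must still handle the relation $\sum_i\delta_i=0$ coming from connectedness of the cover (it only kills the invariant character, so it is harmless for $k\neq 0$, but it should be said) and check nondegeneracy of the intersection form on the $\chi_k$-part of the span of vanishing cycles. For the codimension computation your argument is essentially the paper's: identify the excluded divisors, note that $\pi$ sends $D_I$ to the stratum where the light part collides (codimension $|{\cdot}|-1$), and treat the strictly semistable threshold separately.

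One point needs repair. In the asymmetric case $d\nmid n$ your own setup gives the condition $d\mid k\,s_I$ with $s_I\equiv|I|$ if $n\notin I$ and $s_I\equiv -|I^c|$ if $n\in I$; in either case the relevant quantity is $k$ times the cardinality of the part of the partition \emph{not containing} the heavy point $n$, which ranges up to $n-2$. This does \emph{not} translate into $k\min\{|I|,|I^c|\}\equiv 0\pmod d$ as you assert: for example $n=8$, $d=3$, $k=1$, $I=\{6,7,8\}$ gives $s_I\equiv -5\equiv 1\pmod 3$ (finite monodromy), while $k\min\{|I|,|I^c|\}=3\equiv 0$. Your underlying computation is the one consistent with the paper's own treatment of Case 2(3) of the codimension formula, where the excluded divisors at the threshold are those $D_I$ with $I^c=\{i,n\}$ and the relevant multiplicity is $l=n-2=|I|>n/2$; the literal $\min$ form of the first display is only equivalent to the monodromy condition when $d\mid n$. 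So either state the criterion in terms of the light part of the partition, or justify the reduction to $\min$ only in the symmetric case; as written, the ``translation'' step is a gap, and it propagates into your identification of the smallest admissible excluded cluster in the asymmetric codimension count.
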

To prove Theorem \ref{thm:codimensions}, we analyze in Section ~\ref{sec:PureLocus} the eigenspectra of the degenerations parametrized by the partial compactification $M_{n,d,k}^{\mathrm{pr}}$. This perspective extends Steenbrink's theory on the spectrum of hypersurface singularities, as seen in \cite{KL20}, \cite{castor2022remarks}. The proof is then completed by studying  the behavior of the LMHS arising from degenerations of eigenperiod maps, as detailed in Section
\ref{sec:ExtensionPureLocus}. 

To describe our next main result, we recall that by Proposition \ref{prop:extension} and Kato-Usui theory \cite{KU08},  there exists a finite-index normal neat subgroup $\Gamma^{\mathrm{nm}}_{n,d,k}\leq \Gamma_{n,d,k}$, and a normalization of a finite cover $\overline{M}^{\mathrm{nm}}_{0,n} \to \overline{M}_{0,n}$ such that the lift of $\Phi_{n,d,k}$ admits a Kato-Usui-Toroidal type extension:
\begin{align*}
    \xymatrix{
    \Phi^{\mathrm{nm}}_{n,d,k}:   \overline{M}^{\mathrm{nm}}_{0,n}
    \ar@{.>}[r]& 
    \left( \Gamma^{\mathrm{nm}}_{n,d,k}\backslash D_{n,d,k} \right) 
    \bigsqcup_{N\in \Gamma^{\mathrm{nm}}_{n,d,k}\backslash \Sigma}(\Gamma_N\backslash B(N)).
    }
\end{align*}
in codimension one (Here, $\Gamma_N$ denotes the stabilizer of $N$ in $\Gamma^{\mathrm{nm}}_{n,d,k}$). Therefore, it is most interesting to understand the extended period map better.

Instead of using $\overline{M}_{0,n}$, we recall that there is a smooth geometric compactification for $n \geq  4$, commonly known as Hassett's moduli spaces of weighted stable curves \cite{hassett2003moduli}. For the particular case of $n =  2r$, there is a unique divisor $\mathcal{D}$ in  
$\overline{M}_{0,\mathbf{\frac{2}{n}} + \epsilon}/S_n$ that generically parametrizes a weighted stable curve described as follows: 
It is the union of two $\mathbb{P}^1$ components, each with $r$ distinct marked points glued at another nodal singularity. 
In an open locus $\mathcal{D}^{\circ} \subset \mathcal{D}$ on which the gluing point does not collide with any marked point on either component, we obtain a reduction map 
\begin{align*}
\mathcal{R}:& \mathcal{D}^{\circ} \longrightarrow \mathrm{Sym}^2(M_{0,r}/S_r)
\end{align*}
by disregarding the gluing points on each $\mathbb{P}^1$, see Equation \eqref{eq:R}.  By
the same argument as used for Proposition \ref{prop:extension}, we have a rational map 
$$
\xymatrix{
\overline{M}_{0,\mathbf{\frac{2}{n}} + \epsilon}^{nm}
\ar@{.>}[r]& 
(\Gamma^{\mathrm{nm}}_{n,d,k}\backslash D_{n,d,k})_{\Sigma}
}
$$
where $\overline{M}_{0,\mathbf{\frac{2}{n}} + \epsilon}^{nm}$ is a finite cover of $\overline{M}_{0,\mathbf{\frac{2}{n}} + \epsilon}/S_n$.  Our next result extends this rational map to the interior of the divisors $\mathcal{D}$, and shows that such an extension satisfies a local Torelli-type theorem. 
We denote the union of $M_{0,n}$ and all codimension one strata within $\overline{M}_{0,\mathbf{\frac{2}{n}} + \epsilon}$ 
as $\overline{M}^{\circ}_{0,\mathbf{\frac{2}{n}} + \epsilon} \subset \overline{M}_{0,\mathbf{\frac{2}{n}} + \epsilon}$. 
The selection of a neat subgroup $\Gamma^{\mathrm{nm}}_{n,d,k} \subset 
\Gamma_{n,d,k}$ induces a lift $\mathcal{D}^{\mathrm{nm},\circ}$ and $\mathcal{R}^{\mathrm{nm}}$ for both the divisor $\mathcal{D}^{\circ}$ and the map $\mathcal{R}$. 
In our case, $\Sigma$ is a polyhedral fan in $\mathrm{Lie}(U(r,s))$ with rays generated by the monodromy logarithm $N$ around each divisor $\mathcal{D}$. 
\begin{theorem}\label{thm:KU}
Let $(n,d,k)$ be as in Proposition \ref{prop:extension}(i) and with $n$ even. Then, there exists a finite-index normal neat subgroup $\Gamma^{\mathrm{nm}}_{n,d,k}\leq \Gamma_{n,d,k}$, and a normalization of a finite cover $\overline{M}^{\mathrm{nm},\circ}_{0,\mathbf{\frac{2}{n}} + \epsilon} \longrightarrow \overline{M}_{0,\mathbf{\frac{2}{n}} + \epsilon}^{\circ}/S_n$ such that:
\begin{enumerate}
    \item  The eigenperiod map
    $$
    \phi_{n,d,k} :M_{0,n}/S_n \longrightarrow 
    \Gamma_{n,d,k}\backslash D_{n,d,k}
    $$
    has a unique lift which admits a Kato-Usui type extension:
    \begin{align*}
    \xymatrix{
    \phi^{\circ}_{n,d,k}:   \overline{M}^{\mathrm{nm},\circ}_{0,\mathbf{\frac{2}{n}} + \epsilon}
    \ar[r]& 
    \left( \Gamma^{\mathrm{nm}}_{n,d,k}\backslash D_{n,d,k} \right) 
    \bigsqcup_{N\in \Gamma^{\mathrm{nm}}_{n,d,k}\backslash \Sigma}(\Gamma_N\backslash B(N))
    }
    \end{align*}
which is regular, where $\Gamma_N$ denotes the stabilizer of $N$ in $\Gamma^{\mathrm{nm}}_{n,d,k}$.
    \item For $k=1$, the restriction of $\phi^{\circ}_{n,d,k}$ to any fiber $\left( \mathcal{R}^{\mathrm{nm}} \right)^{-1}(x)$ is an immersion.
    i.e. the map is locally injective. 
\end{enumerate}
\end{theorem}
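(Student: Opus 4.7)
The plan divides into the two assertions of the theorem.

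For (1), I would first combine Proposition~\ref{prop:extension} with Theorem~\ref{thm:codimensions} to reduce the extension problem to a single boundary divisor. With weights $\mathbf{\frac{2}{n}}+\epsilon$ and $n=2r$, the only codimension-one stratum of $\overline{M}^{\circ}_{0,\mathbf{\frac{2}{n}}+\epsilon}/S_n$ is the image of $D_I$ for $|I|=r$, giving a single divisor $\mathcal{D}$. If $kr \equiv 0 \pmod{d}$, then $\mathcal{D} \subset M^{\mathrm{pr}}_{n,d,k}$ by Theorem~\ref{thm:codimensions} and Proposition~\ref{prop:extension} already yields a regular extension. Otherwise, I would compute the monodromy of the $k$-eigen VHS around $\mathcal{D}$, pass to a finite-index normal neat subgroup $\Gamma^{\mathrm{nm}}_{n,d,k} \leq \Gamma_{n,d,k}$ (via Borel's lemma) making it unipotent, and let $N$ be its logarithm. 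Taking $\Sigma$ to be the $\Gamma^{\mathrm{nm}}_{n,d,k}$-invariant fan generated by the rays $\mathbb{R}_{\geq 0}N$ (over the $S_n$-orbit of $\mathcal{D}$), the nilpotent orbit theorem produces a limiting filtration $F^{\bullet}_{\infty}$ such that $(F^{\bullet}_{\infty}, N)$ is a nilpotent orbit defining a point in $\Gamma_N \backslash B(N)$. Regularity in the Kato-Usui topology then follows from admissibility of geometric VHSs together with strong compatibility of $\Sigma$, which is automatic for one-dimensional cones.

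For (2), fix $x = \{[C'_1], [C'_2]\} \in \mathrm{Sym}^2(M_{0,r}/S_r)$. A point of the fiber $\mathcal{F} = (\mathcal{R}^{\mathrm{nm}})^{-1}(x)$ records the gluing point on each of the two fixed $r$-pointed $\mathbb{P}^1$-components, so locally $\mathcal{F}$ is a product of two one-parameter families. The image $\phi^{\circ}_{n,d,1}(\mathcal{F})$ lies in the boundary stratum $\Gamma_N \backslash B(N)$, and to analyze the derivative I would use a Galois-equivariant Clemens-Schmid sequence to identify the weight-$1$ graded piece of the LMHS on the $k=1$ eigenspace $H^1(C_t)_1$ with $H^1(\tilde{C}_1)_1 \oplus H^1(\tilde{C}_2)_1$, where each $\tilde{C}_i$ is the cyclic $d$-cover of $\mathbb{P}^1$ branched at the $r$ fixed points together with the varying gluing point.

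It then remains to show that $(H^1(\tilde{C}_1)_1, H^1(\tilde{C}_2)_1)$, viewed as a function of the two gluing points, varies locally injectively. By independence of the two sides, this reduces to local Torelli for the $k=1$ eigenperiod map of the cyclic $d$-cover of $\mathbb{P}^1$ branched at $r+1$ points, applied to each component separately. In the McMullen/Deligne-Mostow range relevant here this eigenperiod map is an open immersion onto a complex ball, so its differential is injective everywhere; restricting to the one-parameter subfamily where $r$ of the $r+1$ points are fixed still yields an immersion, since the moving-point direction is transverse to the $\mathrm{PGL}_2$-orbits. The most delicate step, and the one I expect to be the main obstacle, is the Galois-equivariant identification of the graded pieces of the LMHS: one must keep track of possible weight-$0$ and weight-$2$ contributions arising when the gluing node is itself a ramification point (which happens precisely when $d \nmid r$) and confirm that these extension classes do not cancel the variation in the weight-$1$ graded piece.
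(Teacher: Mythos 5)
Your treatment of part (1) matches the paper's: pass to a finite-index neat subgroup, normalize the induced finite cover, take $\Sigma$ to be the fan of rays generated by the local monodromy logarithms and their $\mathrm{Ad}(\Gamma^{\mathrm{nm}}_{n,d,k})$-conjugates, and invoke Kato--Usui together with the nilpotent orbit theorem; strong compatibility is automatic for one-dimensional cones. No issues there.

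Part (2), however, contains a genuine gap. You identify the weight-one graded piece of the LMHS with $H^1(\tilde{C}_1)_1\oplus H^1(\tilde{C}_2)_1$ where $\tilde{C}_i$ is the cyclic $d$-cover of $\mathbb{P}^1$ branched at the $r$ fixed points \emph{together with the varying gluing point}, and then reduce to local Torelli for the eigenperiod map of covers branched at $r+1$ points. This misidentifies what actually varies along a fiber of $\mathcal{R}$. In the relevant situation ($d\mid r$, which the paper assumes in this part), the node of $X=X_1\cup X_2$ is \emph{not} a branch point: the cover is \'etale over it, the central fiber has $d$ nodes lying over it, and each normalization component $C_i$ is the $d$-cover of $\mathbb{P}^1$ branched only at the $r$ fixed marked points. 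Hence $\mathrm{Gr}^W_1 H^1_{\mathrm{lim}}(C_t)_k\cong H^1(C_1)_k\oplus H^1(C_2)_k$ is \emph{constant} along the fiber of $\mathcal{R}$ --- this is precisely why the paper observes that the induced map $\Phi^0$ to $B^0(N_0)$ is constant. The entire variation lives in the extension data of the LMHS (the $I^{-1,0}$ piece of the adjoint Deligne splitting), not in the graded quotients, so a local Torelli statement for a pure eigenperiod map of $(r+1)$-pointed covers is not the right target and would not detect the derivative of $\phi^{\circ}_{n,d,1}$ along $(\mathcal{R}^{\mathrm{nm}})^{-1}(x)$.

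The missing idea is the following: the extension class is computed by the Abel--Jacobi image of the $\rho$-eigen relative $1$-cycles $\gamma^i_{\pm}$ obtained by averaging, with $d$-th root of unity coefficients, a path from a fixed base point to one preimage $q_{1,i}$ of the node. One then shows that the map $q_{1,i}\mapsto\int_{\gamma^i_{q_1}}\omega$ is locally injective, which reduces to the statement that the eigen-canonical system $\Omega^1(C_i)_{k=\pm 1}$ is basepoint-free; the paper verifies this for $k=1$ directly from McMullen's explicit basis of eigenforms in Equation \eqref{eqnholodiff}. Your proposal never engages with the extension data or with basepoint-freeness of the eigenforms, so the core of the local injectivity argument is absent. (Your closing remark about weight-$0$ and weight-$2$ contributions when $d\nmid r$ points at a real subtlety, but in the case actually treated these weight-$2$ classes are not a correction term to be controlled --- they are the whole content of the variation.)
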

Our result includes all the Deligne-Mostow cases \cite{gallardo2021geometric} and some cases with $n>12$ that are not covered by Deligne-Mostow. The first statement is a consequence of the techniques employed in \cite[Sec. 3.3]{gallardo2021geometric}. The second part of the theorem follows from a study of the associated Abel-Jacobi map.

\subsection*{Acknowledgements}
We would like to thank Matt Kerr, Gregory Pearlstein, Nolan Schock, and Yilong Zhang for their insightful discussions and comments. We also thank the anonymous referee for careful and insightful comments that significantly improved the article. We are grateful for the working environments at the Department of Mathematics at Washington University in St. Louis, University of California, Riverside, and  Duke University, where this research was conducted.  
The second author is partially supported by the National Science Foundation under Grant No. DMS-2316749.

\setcounter{tocdepth}{2}
\tableofcontents

\section{Preliminaries}

\subsection{Geometric compactifications}
\label{sec:GeometricCompactifications}
We describe two families of compactifications for the moduli space $M_{0,n}$ of $n$ labelled points in the projective line: The one constructed via Geometric Invariant Theory (GIT) and the one parametrizing stable pairs as defined in the Minimal Model Program (MMP). 
We refer to them as geometric compactifications because every point of such compactifications parametrizes configurations of points in a curve.  We will start with the GIT compactification. We refer the reader to \cite{mumford1994geometric} for the general theory.
\begin{theorem}\cite[Chapter 4.4]{mumford1994geometric}
\label{thm:GITcompactification}
Given a weight vector $\mathbf{w}:=(w_1, \ldots, w_n) \in \mathbb{Q}^n$ such that $1 \geq w_i >0$ and $w_1+ \cdots + w_n=2$, 
there is an irreducible projective variety  
\begin{align*}
\overline{M}_{0,\mathbf{w}}^{\mathrm{GIT}}
:=
(\mathbb{P}^1)^n / \!\! /_{\mathbf{w}} \SL_2
\end{align*}
known as the GIT compactification that satisfies the following properties: 
\begin{itemize}
    \item It has a Zariski open subset known as the stable locus, that  contains $M_{0,n}$, and 
    parametrizes configurations  $(p_1, \ldots, p_n)$ of $n$ labelled points in $\mathbb{P}^1 $ such that  
    $\sum_{p_i =p} w_i < 1$ for all $p \in \mathbb{P}^1$.
    \item  For even $n$ and weights $w_i = \frac{2}{n}$ for all $i$, the complement of the stable locus is a finite collection of points. Each of such points is associated to a degeneration where $p_{i_1} = \cdots = p_{i_{\frac{n}{2}}}$ and 
    $p_{i_{\frac{n}{2}+1}}= \cdots =  p_{i_n}$.
\end{itemize}
\end{theorem}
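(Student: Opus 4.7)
The plan is to realize $\overline{M}_{0,\mathbf{w}}^{\mathrm{GIT}}$ as the GIT quotient of $(\mathbb{P}^1)^n$ by the diagonal $\mathrm{SL}_2$-action, linearized by the $\mathbb{Q}$-line bundle $L_{\mathbf{w}} := \mathcal{O}(w_1)\boxtimes\cdots\boxtimes\mathcal{O}(w_n)$. Clearing denominators replaces $L_{\mathbf{w}}$ by a genuine ample line bundle $L_{\mathbf{w}}^{\otimes N}$ (for $N$ a common denominator), and Mumford's construction then produces an irreducible projective variety, with irreducibility descending from $(\mathbb{P}^1)^n$. The normalization $\sum_i w_i = 2$ will control the numerical computations below.

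For the characterization of the stable locus, I would apply the Hilbert--Mumford numerical criterion. Every nontrivial one-parameter subgroup of $\mathrm{SL}_2$ is conjugate to $\lambda(t) = \mathrm{diag}(t, t^{-1})$, whose two fixed points on $\mathbb{P}^1$ I label $0$ and $\infty$. A direct weight computation on the fibers of $L_{\mathbf{w}}$ at the $\lambda$-fixed configurations yields
\[
\mu^{L_{\mathbf{w}}}\bigl((p_1,\dots,p_n),\lambda\bigr) \;=\; 2 \;-\; 2\!\!\sum_{p_i=\infty} w_i.
\]
Conjugating $\lambda$ transports $\infty$ to an arbitrary $p \in \mathbb{P}^1$, so (semi)stability is equivalent to $\sum_{p_i = p} w_i < 1$ (resp. $\le 1$) for every $p \in \mathbb{P}^1$. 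In particular, any configuration of $n$ pairwise distinct points vacuously satisfies the strict inequality, so $M_{0,n}$ embeds in the stable locus.

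For the second bullet, specialize to $n = 2r$ and $w_i = 1/r$. Semistability now allows at most $r$ coincident marked points, and strict semistability occurs precisely when some $p \in \mathbb{P}^1$ carries exactly $r$ of the $p_i$. Thus every strictly semistable configuration has either a single cluster of size $r$ plus $r$ residual (semistable) points, or two disjoint clusters each of size $r$. Acting by the one-parameter subgroup whose attractive fixed point sits at the cluster sweeps the residual points into the repelling fixed point, degenerating the first type to the second; the two-cluster configurations then have closed $\mathrm{SL}_2$-orbit, their stabilizer being the one-dimensional torus fixing both cluster positions. Hence the $S$-equivalence classes of strictly semistable orbits are enumerated by the unordered partitions $\{I, I^c\}$ of $\{1,\dots,n\}$ with $|I|=r$, yielding $\tfrac12\binom{2r}{r}$ isolated closed points in the complement of the stable locus inside $\overline{M}_{0,\mathbf{w}}^{\mathrm{GIT}}$.

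The main subtlety is the closed-orbit analysis underpinning the $S$-equivalence computation: one must verify both that one-cluster-plus-residual configurations fail to be closed (so they are identified in the quotient with a two-cluster configuration) and that the two-cluster configurations are themselves closed. Both reduce to explicit $\mathrm{SL}_2$-degeneration arguments along the relevant diagonal tori, but care is required when passing from labelled orbits to unordered partitions $\{I, I^c\}$ in order to enumerate the resulting finite set of closed points without overcounting.
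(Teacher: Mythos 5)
Your proposal is correct and is essentially the standard Hilbert--Mumford analysis that the paper itself does not reprove but simply cites from Mumford's Chapter 4.4 (the weighted-points-on-$\mathbb{P}^1$ example): linearize by $\mathcal{O}(w_1)\boxtimes\cdots\boxtimes\mathcal{O}(w_n)$, compute the numerical function to get the (semi)stability condition $\sum_{p_i=p}w_i<1$ (resp. $\le 1$), and identify the closed strictly semistable orbits with the two-cluster configurations, giving the finite set of boundary points. The only cosmetic slip is that the one-parameter subgroup degenerating a one-cluster configuration to a two-cluster one should have its \emph{repelling} fixed point at the cluster (equivalently, take the limit in the other direction), so that the residual points are swept to the opposite fixed point rather than into the cluster; this does not affect the argument.
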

\begin{remark}
In general, the complement of the stable locus in the GIT quotient is known as 
the GIT boundary.  Every point of the stable locus is associated with a unique $\SL_2$-orbit (that is a configuration of points up to isomorphism.) This is not true for the GIT boundary where several non-isomorphic orbits maybe associated with the same point in the quotient.
\end{remark}

We need the following moduli of partially labeled points to use results from  \cite[Sec 2]{McM13}. If $\mathbb{C}^{\langle n \rangle} $ is the space of monic complex polynomials of degree $n$ with non-vanishing discriminant,
then the moduli space of $n$ distinct non-labelled points in the complex line up to 
$\mathrm{Aut}(\mathbb C)$ is given by
\begin{align}\label{eq:PointsAffineLine}
M_{0,n}^* := \mathbb{C}^{\langle n \rangle} /
\mathrm{Aut}(\mathbb C)
&&
\mathrm{Aut}(\mathbb C):= \{ (a,b) 
\in \mathbb{G}_m \rtimes \mathbb{C}
\; | \; z \mapsto az +b\}.
\end{align}
A labelled version of this space  is known as $T_{1,n}$ and it is was introduced for arbitrary dimension in  \cite{chen2009pointed}.
Let $T_{1,n}^{\circ} \subset T_{1,n}$ be the open parametrizing configuration of $n$ distinct labelled points. 
By \cite[Prop 3.4.3]{chen2009pointed}, we know that $T_{1,n}^{\circ} \cong M_{0,n+1}$
with the map  $\mathbb{C} \to \mathbb{P}^1$
induced by  $z_i \mapsto [1:z_i]$ and the $(n+1)$ point being $[0:1]$. 
Therefore, by their definition we have
$$
M_{0,n}^* \cong T_{1,n}^{\circ}/S_n
\cong
M_{0,n+1}/S_n
$$
To obtain a GIT compactification, let 
$\mathbf{v} = (v_1, \ldots, v_n, v_{n+1}) \in \mathbb{Q}^{n+1}$ be a vector of weights such that $1 \geq v_i > 0$ and $v_1 + \ldots + v_{n+1}=2$. If we suppose that $v_1=\ldots = v_n$, then we have the diagram:
\begin{align*}
\xymatrix{
M_{0,n+1} \ar[d] \ar[r] &  \overline{M}^{\mathrm{GIT}}_{\mathbf{v}} \ar[d]
 \\
M_{0,n}^* \ar[r] & \overline{M}^{\mathrm{GIT}}_{\mathbf{v}}/S_n
  }  
\end{align*}
The conditions on the weights imply they can be written as 
\begin{align*}
v_1 = \cdots = v_n = a
&&
v_{n+1} = 2-na
\text{ with $a > 0$ }
\end{align*}
From all possible choices, we highlight the case $v_{n+1}=1-n\epsilon$ and $v_1=...=v_n=\frac{1}{n} + \epsilon$
with  $\frac{1}{n(n-1)}  > \epsilon$.
For the same argument that in \cite[Lemma 4.13]{gallardo2017wonderful},  we have that 
$\overline{M}^{\mathrm{GIT}}_{\mathbf{v}} \cong \mathbb{P}^{n-2}$ and the GIT stable locus with respect to this choice of weights will parametrize configurations where any proper subset of the first $n$ points can collide within each other but never with the point at infinity.

Next, we describe the stable pair compactifications. They were introduced by Hassett in \cite{hassett2003moduli} for arbitrary genus. However, we will focus in the genus zero case.  Let $n$ be a positive integer and let  $\mathbf{b} \in \mathbb{Q}^n$ be a rational weight vector such that $2 < \sum b_i$, and $0 < b_i \leq 1$.
\begin{definition}\cite{hassett2003moduli}
A weighted stable rational curve for the weights $\mathbf{b} = (b_1, \ldots, b_n)$ is a pair
$\left( C, \sum_{i=1}^n b_ix_i \right)$ such that
\begin{itemize}
    \item $C$ is at worst nodal connected projective curve
of arithmetic genus zero,
\item $x_i$ are smooth points of $C$,
    \item If $x_{i_1} = \cdots = x_{i_k}$, then $b_{i_1} + \cdots b_{i_k} \leq 1$
    \item The divisor $K_C + \sum_{i=1}^n b_ix_i$ is ample.
\end{itemize}
\end{definition}
\begin{theorem}\cite[Theorem 2.1]{hassett2003moduli}
There exist a finite smooth projective moduli space 
$\overline{M}_{0,\mathbf{b}}$ parametrizing weighted stable rational curves of weight $\mathbf{b}$, and containing $M_{0,n}$ as a Zariski open locus. For $\mathbf{b} = (1, \ldots, 1)$, this space is the simple normal crossing compactification of $M_{0,n}$
known as the Deligne-Mumford-Knudsen compactification $\overline{M}_{0,n}$.
\end{theorem}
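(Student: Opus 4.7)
The plan is to construct $\overline{M}_{0,\mathbf{b}}$ and verify the listed properties by combining the deformation theory of pointed nodal curves with a reduction/contraction procedure starting from the Deligne-Mumford-Knudsen space $\overline{M}_{0,n}$. First, I would set up the moduli problem: define the functor $\mathcal{M}_{0,\mathbf{b}}$ sending a scheme $S$ to isomorphism classes of flat proper morphisms $\pi:\mathcal{C}\to S$ equipped with $n$ sections $x_i:S\to\mathcal{C}$ such that each geometric fiber $(\mathcal{C}_s, x_1(s),\ldots,x_n(s))$ is a $\mathbf{b}$-weighted stable rational curve. The first task is to show this functor is represented by a separated Deligne-Mumford stack of finite type. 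Automorphism groups are finite because the ampleness of $K_C+\sum b_ix_i$ forces $\Aut(C,\sum b_ix_i)$ to be finite, and unobstructedness of deformations follows from the vanishing of the relevant $\Ext^2$ for nodal curves with markings in the smooth locus.

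Second, to produce an actual projective coarse moduli space, I would exploit the existence of $\overline{M}_{0,n}$ (the case $\mathbf{b}=(1,\ldots,1)$, which we take as known from Knudsen's construction). Given any weight vector $\mathbf{b}$, construct a reduction morphism $\rho_{\mathbf{b}}:\overline{M}_{0,n}\to \overline{M}_{0,\mathbf{b}}$ as follows: for a stable pointed curve $(C,x_1,\ldots,x_n)\in\overline{M}_{0,n}$, iteratively contract any irreducible component $E\cong\mathbb{P}^1$ on which $\deg\bigl((K_C+\sum b_ix_i)|_E\bigr)\leq 0$; since we are in genus zero, each contracted component is a smooth rational curve and the contraction is induced by a linear system, producing a new flat family of $\mathbf{b}$-stable curves (marked points on a contracted component get identified, and the weight inequalities are designed precisely so that the new configuration is $\mathbf{b}$-stable). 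The image is projective as the image of a projective variety, and one verifies it corepresents $\mathcal{M}_{0,\mathbf{b}}$ by checking the universal property on families and by using the valuative criterion of properness on the stack.

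Third, smoothness of $\overline{M}_{0,\mathbf{b}}$ follows from the explicit description of its local deformation theory: at a point $(C,\sum b_ix_i)$ the tangent space is $\Ext^1(\Omega^1_C(\sum_{i\in I}x_i),\mathcal{O}_C)$ where $I$ records a representative of each collision class, and the obstructions lie in $\Ext^2$, which vanishes because $C$ is a one-dimensional Cohen-Macaulay scheme. The openness of $M_{0,n}\subset \overline{M}_{0,\mathbf{b}}$ is immediate, since a configuration of $n$ distinct points on a smooth $\mathbb{P}^1$ is $\mathbf{b}$-stable for every weight vector. Finally, for $\mathbf{b}=(1,\ldots,1)$ the condition ``$b_{i_1}+\cdots+b_{i_k}\leq 1$ for coincident marked points'' collapses to ``no two marked points coincide,'' and ``$K_C+\sum x_i$ ample'' recovers the usual stability, so the construction recovers the normal-crossing compactification $\overline{M}_{0,n}$. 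The main obstacle is verifying projectivity of the image in a canonical way: this rigorously requires either producing an explicit ample line bundle (e.g., a suitable twist of $\omega_{\pi}+\sum b_ix_i$) on the universal family and invoking Koll\'ar's ampleness lemma, or directly checking that the contraction morphism from $\overline{M}_{0,n}$ lands in a projective subscheme of a suitable Hilbert/Chow scheme.
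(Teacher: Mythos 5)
First, note that the paper does not prove this statement at all: it is quoted directly from Hassett as \cite[Theorem 2.1]{hassett2003moduli}, so there is no internal argument to compare against, and your proposal has to be measured against Hassett's construction itself. At that level your outline follows the standard route and is close to what Hassett actually does (moduli functor of weighted pointed curves, finiteness of automorphisms from ampleness of $K_C+\sum b_ix_i$, reduction morphisms from $\overline{M}_{0,n}$, a Koll\'ar-type ampleness argument for projectivity), so the overall strategy is sound.

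There are, however, two concrete problems. The first is your description of the tangent space as $\Ext^1(\Omega^1_C(\sum_{i\in I}x_i),\mathcal{O}_C)$ with $I$ ``a representative of each collision class.'' Coincident marked points are still distinct sections of the universal family and deform independently of one another; indeed the whole point of the weighted spaces is that a colliding cluster can be separated by an arbitrarily small deformation. The deformation space at every point of $\overline{M}_{0,\mathbf{b}}$ has dimension $n-3$, whereas your formula would give a smaller-dimensional space wherever points collide, which is incompatible both with smoothness of an irreducible space of dimension $n-3$ and with the reduction morphism $\rho_{\mathbf{b}}$ being birational. The second is the construction of $\rho_{\mathbf{b}}$ in families: contracting the components with $\deg\bigl((K_C+\sum b_ix_i)|_E\bigr)\le 0$ fiber by fiber does not by itself produce a flat family over the base, nor a morphism of schemes. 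One has to realize the contraction as $\mathrm{Proj}$ of the relative log canonical algebra of the universal curve and check that its formation commutes with base change (cohomology and base change, vanishing of the relevant $R^1\pi_*$), which is where the real work in Hassett's argument lies; your sketch acknowledges the linear system but not the base-change issue. Finally, a smaller point: the theorem asserts a \emph{fine} moduli space (the word ``finite'' in the statement is evidently a typo), so finiteness of automorphism groups is not enough; you need triviality, which in genus zero follows because every component of a $\mathbf{b}$-stable curve carries at least three distinct special points (any colliding cluster has total weight at most $1$, while a tail must carry marked points of total weight greater than $1$).
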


\begin{proposition}
\label{thm:Hassett}
\cite{hassett2003moduli}
Let 
$\mathbf{a}=(a_1, \ldots, a_n)$ and $\mathbf{c}=(c_1, \ldots, c_n)$ such that $a_i \geq c_i$ for all $n \geq i \geq 1$.  Then, there exist a birational morphism 
$$
\rho_{\mathbf{a},\mathbf{c}}: 
\overline{M}_{0, \mathbf{a}} \longrightarrow
\overline{M}_{0, \mathbf{c}}
$$
which is called the reduction morphism. This morphism is an isomorphism in the locus within $\overline{M}_{0, \mathbf{c}}$
 parametrizing stable curves that are stable with respect to both weights $\mathbf{a}$ and $\mathbf{c}$.
\end{proposition}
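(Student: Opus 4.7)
My plan is to construct $\rho_{\mathbf{a},\mathbf{c}}$ on the level of the universal family over $\overline{M}_{0,\mathbf{a}}$ via a canonical contraction, and then invoke the universal property of $\overline{M}_{0,\mathbf{c}}$. Let $\pi : \mathcal{U} \to \overline{M}_{0,\mathbf{a}}$ denote the universal curve with sections $\sigma_1, \ldots, \sigma_n$, and consider the relative $\mathbf{c}$-weighted log dualizing sheaf $\mathcal{L} := \omega_{\pi}\bigl(\sum_i c_i \sigma_i\bigr)$. Because $c_i \leq a_i$ and the $\mathbf{a}$-weighted relative dualizing sheaf was ample on every fiber, $\mathcal{L}$ is fiberwise nef and fails to be ample only on a controlled family of irreducible components where the new $c$-weight total (marked points plus nodes) has dropped to $\leq 2$. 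For $N \gg 0$ divisible, $\mathcal{L}^{\otimes N}$ is relatively basepoint-free, and I would define
\[
\mathcal{U}' \;:=\; \Proj_{\overline{M}_{0,\mathbf{a}}} \bigoplus_{m \geq 0} \pi_* \mathcal{L}^{\otimes mN}.
\]
The composition $\sigma_i' := (\text{contraction}) \circ \sigma_i$ gives sections making $(\pi',\sigma_i')$ a family whose fibers are precisely the $\mathbf{c}$-stable curves obtained from $\mathbf{a}$-stable curves by collapsing every $\mathbb{P}^1$-component on which $\deg \mathcal{L} \leq 0$; the ampleness and collision conditions in the definition of $\mathbf{c}$-stability are automatic from this construction.

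Next, I would appeal to the universal property of $\overline{M}_{0,\mathbf{c}}$ as a fine moduli space to conclude that the family $(\pi',\sigma_i')$ induces a morphism $\rho_{\mathbf{a},\mathbf{c}} : \overline{M}_{0,\mathbf{a}} \to \overline{M}_{0,\mathbf{c}}$. Birationality is immediate: over the open dense $M_{0,n} \subset \overline{M}_{0,\mathbf{a}}$, the marked points are distinct on a smooth $\mathbb{P}^1$, no component is ever contracted, and $\rho_{\mathbf{a},\mathbf{c}}$ restricts to the identity on $M_{0,n}$. For the isomorphism claim, suppose $[C] \in \overline{M}_{0,\mathbf{c}}$ parametrizes a pair that is also $\mathbf{a}$-stable. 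Then $\omega_C\bigl(\sum a_i x_i\bigr)$ is ample and, since $c_i \leq a_i$, no marked-point collision allowed by $\mathbf{c}$ is forbidden by $\mathbf{a}$; checking component by component shows that $\omega_C\bigl(\sum c_i x_i\bigr)$ is ample as well, so no contraction occurs in the construction above. The same curve therefore represents a point of $\overline{M}_{0,\mathbf{a}}$, yielding a two-sided inverse on the mutually stable locus and hence an isomorphism there.

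The main obstacle I foresee is the fiberwise verification underlying the Proj step: one must check that each component where $\deg \mathcal{L} \leq 0$ carries, with respect to the $\mathbf{c}$-weight, at most two special points (marked points counted with weight plus node incidences), so that contraction produces at worst a smooth point or a node rather than a worse singularity, and that the allowed collisions of sections under the contraction still obey $\sum_{x_i' = p} c_i \leq 1$. One also needs $\pi_* \mathcal{L}^{\otimes mN}$ to commute with base change, which reduces to the vanishing of $R^1\pi_*\mathcal{L}^{\otimes mN}$; this follows from nefness and the genus-zero arithmetic on each fiber. Once this combinatorial-cohomological bookkeeping is in place, the remaining claims (existence of the morphism, birationality, and the isomorphism on the mutually stable locus) are formal consequences of the moduli functor.
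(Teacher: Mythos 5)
The paper offers no proof of this statement at all: it is quoted verbatim from Hassett \cite{hassett2003moduli} (his Theorem~4.1), so there is no internal argument to compare against. Your strategy --- contract the universal curve via the section ring of the $\mathbf{c}$-weighted log dualizing sheaf and then invoke the universal property of the fine moduli space --- is indeed the strategy of the cited source, and the birationality and isomorphism-locus arguments at the end are fine in outline.

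However, there is a genuine gap at the step you treat as automatic. The sheaf $\mathcal{L}=\omega_{\pi}\bigl(\sum_i c_i\sigma_i\bigr)$ is \emph{not} fiberwise nef, and $\mathcal{L}^{\otimes N}$ is \emph{not} relatively basepoint-free, precisely in the situations where $\rho_{\mathbf{a},\mathbf{c}}$ has to do something. On a component $E\cong\mathbb{P}^1$ of an $\mathbf{a}$-stable fiber meeting the rest of the curve in $k$ nodes and carrying markings of total $\mathbf{c}$-weight $w_E$, one has $\deg\mathcal{L}|_E=k-2+w_E$; for an end component ($k=1$) with $w_E<1$ (e.g.\ $a_i=1$, $c_i=\epsilon$ on two markings of $E$) this is strictly negative. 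Consequently every global section of every power of $\mathcal{L}$ vanishes identically on $E$, so $E$ lies in the base locus of $|\mathcal{L}^{\otimes N}|$ for all $N$, and $R^1\pi_*\mathcal{L}^{\otimes mN}$ does not vanish (the normalization sequence surjects $H^1(C,\mathcal{L}^{\otimes m})$ onto $H^1(E,\mathcal{L}^{\otimes m}|_E)\neq 0$). So the two facts you defer to --- basepoint-freeness and cohomology-and-base-change via $R^1=0$ --- both fail for the naive bundle, and the $\Proj$ construction does not ``formally'' produce a flat family of $\mathbf{c}$-stable curves with a contraction from $\mathcal{U}$. Repairing this is the actual content of Hassett's argument: one must either show that the section ring is still finitely generated, flat over the base, and computed by the restriction of $\mathcal{L}$ to the positive subcurve (on which it is semiample), or factor the reduction into elementary wall-crossings in the weight domain where the collapsed locus is controlled, or replace $\mathcal{L}$ by a suitable modification. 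As written, your proof would break exactly on the boundary strata that the reduction morphism is supposed to contract.
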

Next, we recall the relationship between the compactifications introduced in Theorem \ref{thm:GITcompactification} and Theorem \ref{thm:Hassett}.
\begin{proposition}
\label{prop:KSBAtoGIT}
\cite[Sec 8]{hassett2003moduli}
Given $\mathbf{w} + \epsilon:=(w_1 + \epsilon, \ldots, w_n+ \epsilon)$ were $\epsilon \ll 1$ is a positive rational number and $\mathbf{w}$ as in Theorem \ref{thm:GITcompactification}.
Then, there is a morphism 
$ \varphi_{\mathbf{w}}:
\overline{M}_{0, \mathbf{w} + \epsilon}
\longrightarrow \overline{M}_{0, 
\mathbf{ w}}^{\mathrm{GIT}}$
such that: 
\begin{itemize}
    \item[1.] If there are subsets of indexes $I \subsetneq \{1, \cdots, n\}$ with $\sum_{i\in I} w_i = 1$, then $\varphi_{\mathbf{w}}$ is a blow up of the GIT compactification at the GIT boundary $p_{I,I^{c}}$. 
    \item[2.] If there is no such subset, then $\varphi_{\mathbf{w}}$ is an isomorphism. 
\end{itemize}
\end{proposition}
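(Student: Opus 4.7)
The plan is to construct $\varphi_{\mathbf{w}}$ as a log-canonical reduction of the universal weighted family provided by Proposition~\ref{thm:Hassett}, and then to analyze it component by component using adjunction. Let $(\pi:\mathcal{C}\to \overline{M}_{0,\mathbf{w}+\epsilon},\ \sum_i(w_i+\epsilon)s_i)$ denote the universal family and form the relative log line bundle $L := \omega_{\pi}+\sum_i w_i s_i$. Adjunction on a $\mathbb{P}^1$-component $E$ of a geometric fiber meeting the rest of that fiber in $k$ nodes and supporting the sections indexed by $J\subseteq\{1,\ldots,n\}$ yields
$$\deg L|_E = -2 + k + \sum_{i\in J} w_i.$$
Weighted $(\mathbf{w}+\epsilon)$-stability forces $-2+k+\sum_{i\in J}(w_i+\epsilon)>0$, so $\deg L|_E\geq 0$ with equality only if $k=1$ and $\sum_{i\in J}w_i=1$. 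Thus $L$ is relatively nef with finitely many degree-zero components per fiber; contracting those rational bridges fiberwise produces a flat family of $\mathbf{w}$-semistable $n$-pointed $\mathbb{P}^1$'s, which by the universal property of the GIT quotient descends to a morphism $\varphi_{\mathbf{w}}:\overline{M}_{0,\mathbf{w}+\epsilon}\to\overline{M}_{0,\mathbf{w}}^{\mathrm{GIT}}$.

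In Case (2), where no proper $I\subsetneq\{1,\ldots,n\}$ satisfies $\sum_{i\in I}w_i=1$, the adjunction computation gives $\deg L|_E>0$ on every component, so no contraction occurs and $L$ is fiberwise ample. Simultaneously the GIT inequality $\sum_{p_i=p}w_i<1$ is strict for every possible collision pattern, so the strictly semistable locus is empty and every $\mathbf{w}$-semistable configuration is $\mathbf{w}$-stable. Both spaces then represent the same functor on closed points; they are normal projective varieties of the same dimension, and the induced $\varphi_{\mathbf{w}}$ is bijective on closed points, hence an isomorphism by Zariski's main theorem.

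In Case (1), for each proper subset $I$ with $\sum_{i\in I}w_i=1$, the GIT boundary point $p_{I,I^c}$ corresponds to the closed strictly semistable orbit in which the $I$-marked points collide at one point of $\mathbb{P}^1$ and the $I^c$-marked points collide at another. In $\overline{M}_{0,\mathbf{w}+\epsilon}$, this collision is resolved by the boundary divisor $D_I$ whose generic fiber is a pair of $\mathbb{P}^1$'s joined at a node with marked points partitioned by $I$ and $I^c$. On each such fiber, each component satisfies $\deg L|_E = -2 + 1 + 1 = 0$, so both components are contracted by the procedure of the first paragraph to the strictly semistable point $p_{I,I^c}$, giving $\varphi_{\mathbf{w}}(D_I) = \{p_{I,I^c}\}$. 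Together with the Case (2) argument applied on the complement of the finite set $\{p_{I,I^c}\}$, we see that $\varphi_{\mathbf{w}}$ is a birational morphism from a smooth variety to a normal projective variety, an isomorphism off this finite set, and with each $D_I$ a Cartier divisor mapping to the distinct point $p_{I,I^c}$. The universal property of blowing up then identifies $\varphi_{\mathbf{w}}$ with the blow-up of $\overline{M}_{0,\mathbf{w}}^{\mathrm{GIT}}$ at the collection of points $\{p_{I,I^c}\}$.

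The main technical obstacle I anticipate is the descent of the contracted family through the GIT quotient along the strictly semistable locus, where $\overline{M}_{0,\mathbf{w}}^{\mathrm{GIT}}$ is only a categorical (rather than fine) moduli and the universal property must be applied with care. Following Hassett, I would handle this one-parameter family at a time via semistable reduction, verifying that the contraction of the rational bridges always lands in the unique closed orbit of each $S$-equivalence class and that the resulting classifying map to the GIT quotient is independent of the test family chosen.
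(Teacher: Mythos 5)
This proposition is quoted from the literature: the paper gives no proof, citing \cite[Sec 8]{hassett2003moduli} for the existence of $\varphi_{\mathbf{w}}$ and, in the remark immediately following, \cite{young2011moduli} and \cite{Moon2011} for the identification of $\varphi_{\mathbf{w}}$ with a blow-up (of the Kirwan ideal sheaf). Your construction of the morphism via the relatively nef log bundle $L=\omega_\pi+\sum_i w_i s_i$ and the adjunction computation $\deg L|_E=-2+k+\sum_{i\in J}w_i$ is essentially Hassett's argument, and the degree bookkeeping is correct. One soft spot you should not gloss over: over a generic point of $D_I$ with $\sum_{i\in I}w_i=1$ the fiber has \emph{both} components of $L$-degree zero, so ``contracting the degree-zero components'' collapses the entire fiber and does not produce a family of pointed $\mathbb{P}^1$'s there; the morphism must instead be obtained from the semi-ample bundle itself (comparing $\mathrm{Proj}\bigoplus_m\pi_*L^{\otimes m}$ with the GIT section ring) or by S-equivalence on one-parameter families, as you anticipate in your final paragraph. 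That part is repairable.

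The genuine gap is in your last step. From ``$\varphi_{\mathbf{w}}$ is a birational morphism, an isomorphism off a finite set, and each $D_I$ is a Cartier divisor mapping to the distinct point $p_{I,I^c}$'' you cannot conclude via the universal property that $\varphi_{\mathbf{w}}$ \emph{is} the blow-up at the reduced points. The universal property only yields a factorization $\overline{M}_{0,\mathbf{w}+\epsilon}\to \mathrm{Bl}_{p}\overline{M}_{0,\mathbf{w}}^{\mathrm{GIT}}\to\overline{M}_{0,\mathbf{w}}^{\mathrm{GIT}}$, and that only after you check that the scheme-theoretic fiber $\varphi_{\mathbf{w}}^{-1}(p_{I,I^c})$ equals the reduced divisor $D_I$, i.e.\ that the pullback of the maximal ideal is $\mathcal{O}(-D_I)$ rather than a thickening; you must then still show the induced map to the blow-up contracts nothing and that the blow-up is normal, before Zariski's main theorem applies. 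This is not a formality: $\overline{M}_{0,\mathbf{w}}^{\mathrm{GIT}}$ is singular at $p_{I,I^c}$ (locally a cone over a Segre variety, by the Luna slice at the strictly semistable orbit), and divisorial contractions to singular points which are isomorphisms elsewhere are frequently \emph{not} blow-ups of the reduced point (e.g.\ weighted blow-ups, minimal resolutions of quotient singularities). The local Luna-slice/Kirwan analysis identifying the exceptional divisor $D_I\cong\overline{M}_{0,\mathbf{w}|_I+\epsilon,1}\times\overline{M}_{0,\mathbf{w}|_{I^c}+\epsilon,1}$ with the projectivized normal cone is exactly the missing content, and it is what the references in the paper's remark supply.
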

We remark that we only state the result that we used. It was proved in \cite{young2011moduli} and  \cite{Moon2011} that the morphism in Proposition \ref{prop:KSBAtoGIT} is a  blow-up 
of the ideal sheaf given by
Kirwan’s partial desingularization, see
\cite{kirwan1985partial}.

For the partially labeled case described in Equation \eqref{eq:PointsAffineLine},
we should be careful in using the same weight at the non-labelled points, and keeping the labelled and non-labelled points separated at the degenerations. That is, we consider the rational weights $(b_1, \ldots, b_{n+1}) \in \mathbb{Q}^{n+1}$ such that
$b_1 = \ldots = b_n$,  $b_{n+1} + b_{i} > 1$
$b_1 + \ldots +b_{n+1} > 2$ and 
$1 \geq b_1 > 0$. We have 
$M_{0,n}^* \cong M_{0,n+1}/S_n \subset \overline{M}_{0,\mathbf{b}}/S_n$
with the permutation group $S_n$ acting on the first $n$ labelled points.

\subsection{Eigenperiod map for $n$ points in the line}
\label{sec:hodgetheoryBackground}
Let $\mu := (p_1, \ldots, p_n)$ be a collection of $n$ distinct labelled points in $\mathbb{P}^1$ and let us choose an affine chart such that $p_i= [1:x_i]$ for all $i$. Given a fixed integer $d \geq 2$, we can associated to our collection of points $\mu$ a smooth curve 
$C_{\mu,d}$ of weighted degree $nd$ in the weighted projective plane $\mathbb{P}[d:d:n]$  defined by the closure of the affine curve
\begin{align}\label{eq:CurveFromPoints}
    C_{\mu,d}:= 
    \overline{\{ y^d-(z_0-x_1z_1)...(z_0-x_nz_1) =0\}},
    & &
\deg(z_0) = \deg(z_1) = d, \; \deg(y)=n.
\end{align}
if $d|n$, and 
\begin{align}\label{eq:CurveFromPoints2}
    C_{\mu,d}:= 
    \overline{\{ y^d-z_1(z_0-x_1z_1)...(z_0-x_{n-1}z_1) =0\}},
    & &
\deg(z_0) = \deg(z_1) = d, \; \deg(y)=n.
\end{align}
if $d\nmid n$ and $\mathrm{gcd}(d,n-1)=1$. In the second case, $C_{\mu,d}$ admits a branched point of degree $d$ at the infinity. 

Geometrically, $C_{\mu,d}$ can be realized as the $d^{th}$-cyclic cover of $\mathbb{P}^1$ branched along  $(p_1, \cdots, p_n )$. Then, 
the isomorphism class of the curve is defined up an action of $\SL_2 \times S_n$ on the points, and  by varying the points
$(p_1, \ldots, p_n)$ we obtain a flat family of smooth curves $\mathcal{C}_{\mu, d}\xrightarrow{\pi} M_{0,n}$.
This family has been studied in the context of the moduli of curves, see \cite{fedorchuk2011cyclic}, and it defines a polarized variation of Hodge structure (PVHS) over $M_{0,n}$ such that for a fixed $\mu_0 \in M_{0,n}$, we have 
$\left( H^{1}(C_{\mu_0,d}), F^{\bullet}_{\mu_0}, Q_{\mu_0} \right)$.

Let $A_d$ be the cyclic group generated by  the $d$-th primitive root of unity $\lambda := e^{\frac{2 \pi i}{d}}$. 
This group acts on $C_{\mu,d}$ by $\rho: y\rightarrow \lambda y$ and on $(H^{1}(C_{\mu,d}),Q_{\mu})$ by Hodge isometries via a linear representation 
$\rho^*: A_d \longrightarrow  \Aut(H^{1}(C_{\mu,d}),Q_{\mu})$.
The action of $A_d$ induces an eigenspace decomposition 
\begin{align*}
H^{1}(C_{\mu,d}, \mathbb{C})_{\chi} 
=
\lbrace
x \in 
H^{1}(C_{\mu,d}, \mathbb{C})
\;  | \; 
g \cdot x = \chi(g) x, \; \forall g \in \rho^*(A_d)
\rbrace,
&&
\chi \in \Hom\left(A_d, \C \right)
\end{align*}
Since we can identify the character $\chi$ of $A_d$ with either $\lambda^k$ or its complex conjugated, then we labeled them by 
$k$ within $1 \leq k \leq \lfloor \frac{d}{2}\rfloor$.
We denote as $ H^1_{k}(C_{\mu,d}) $ 
and $ H^1_{\overline{k}}(C_{\mu,d}) $
the eigen-spaces with respect to the eigen-value $\lambda^k$ and $\overline{\lambda^k}$ respectively. We have  the eigenspace decomposition
\begin{align*}
H^1(C_{\mu,d}, \mathbb{C})
=
\bigoplus_{
1\leq k\leq \big\lfloor \frac{d}{2}\big\rfloor    
} \left( 
H^1_{k}(C_{\mu,d},  \mathbb{C}) \bigoplus 
H^1_{d-k}(C_{\mu,d},\mathbb{C}) \right).
\end{align*}
By considering the summands 
$H^{p,q}(C_{\mu,d}, \mathbb{C})$ with  $(p,q)=(1,0)$ or $(0,1)$ in the Hodge decomposition of $H^{1}(C_{\mu,d}, \mathbb{C})$, we obtain the eigenpieces
\begin{align*}
H^{p,q}_k(C_{\mu,d}, \mathbb{C})
&:=
H^{p,q}(C_{\mu,d}, \mathbb{C}) \cap 
H^{1}_k(C_{\mu,d}, \mathbb{C})
\end{align*}
with eigen-Hodge numbers defined as
\begin{align}\label{eq:EigenHodgeNumbers}
\mathbf{h}_k=(h^{p,q}_k),   
&&
h^{p,q}_k = \dim H^{p,q}_k(C_{\mu,d}, \mathbb{C})
\end{align}
To define our eigenperiod space, we consider the flag 
$$
F^{p}_{k} := 
\bigoplus_{s \geq p} H^{s,p}_k(C_{\mu,d}, \mathbb{C})
$$
and denote  as $\mathbf{f}_k= (\dim F_k^{p} \;| \; p=0, 1)$ its dimension vector. Now fix a collection of $n$ distinct points $\mu_0$ in $\mathbb{P}^1$, the Grassmannian
$ \mathrm{Grass}( \mathbf{f}_k, H^{1}_k(C_{\mu_0,d}, \mathbb{C})) $ contains as a non-compact locus  defined as
\begin{align}\label{eq:Eigenspace}
D_{n,d,k}:=
\lbrace
F^{\bullet}_{k} \in \mathrm{Grass}( \mathbf{f}_k, 
H^{1}_k(C_{\mu_0,d}, \mathbb{C})
)
\; \big| \;
Q(F_k^1,F_k^1)=0, \;\;
Q(Cv, \overline{v}) > 0 \; 
\rbrace 
\end{align}
where $C$ is the Weil operator and acts on $H^{p,q}_k(C_{\mu_0,d}, \mathbb{C})$ via $i^{p-q}\cdot \mathrm{Id}$. An entirely analogous definition works for $D_{n,d,\overline{k}}$. 

\begin{definition}
\label{def:EigenPeriod}
For fixed $k$ and $d$ as above, the (universal) eigenperiod map of $n$ points in the line is the holomorphic map 
\begin{align*}
\widehat{\phi}_{n,d,k}:\widehat{M}_{0,n} \longrightarrow D_{n,d,k}
\end{align*}
where $\widehat{M}_{0,n}$ is the universal cover of $M_{0,n}$, $D_{n,d,k}$ is defined by Equation \eqref{eq:Eigenspace}, and the map $\widehat{\phi}_{n,d,k}$ is obtained from analytic continuation. The values of $k$ and $d$ are known as the cover data and denoted as $k/d$. 
\end{definition}
Finally, we need the following explicit basis of $H^1(C_{\mu,d}, \mathbb{C})$.
\begin{lemma}\cite[Chap. 3]{McM13}
For $n$ and $d$ fixed, 
the following holomorphic forms as well as their complex conjugates give a basis of $H^1(C_{\mu,d}, \mathbb{C})$:
\begin{equation}\label{eqnholodiff}
\left\{
\frac{z^{j-1}dz}{y^k} \ \bigg| \  
0 <\frac{j}{n}<\frac{k}{d} < 1, \;
j,k \in \mathbb{N}
\right\}
\end{equation}
Moreover, the eigen-Hodge numbers, defined in Equation \ref{eq:EigenHodgeNumbers}, are given by 
 \begin{align*}
    h^{1,0}_{k}=h^{0,1}_{\bar{k}}=\lceil n(k/d)-1 \rceil, 
   && h^{0,1}_{k}=h^{1,0}_{\bar{k}}=\lceil n(1-k/d)-1 \rceil, 
   &&
 1\leq k\leq \bigg\lfloor \frac{d}{2}\bigg\rfloor .   
\end{align*}
\end{lemma}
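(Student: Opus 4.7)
The plan is to realize each $\omega_{j,k} := z^{j-1}\,dz/y^k$ as a meromorphic differential on $C_{\mu,d}$, compute its divisor by local uniformizers at the ramification points and at the points above $\infty$, and read off the pair of inequalities as the precise holomorphicity condition. Linear independence will then follow from the Galois--character decomposition, and a dimension count against the Riemann--Hurwitz genus will upgrade the collection to a basis.

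Concretely, I would first treat the case $d\mid n$, where $\pi\colon C_{\mu,d}\to \mathbb{P}^1$ is totally ramified over $x_1,\dots,x_n$ and unramified over $\infty$. At the ramification point $P_i$ above $x_i$, the function $y$ is a local uniformizer since $y^d = u(z-x_i)$ with $u$ a unit at $P_i$; this yields $\mathrm{ord}_{P_i}(dz)=d-1$, $\mathrm{ord}_{P_i}(y)=1$, hence $\mathrm{ord}_{P_i}(\omega_{j,k})=d-1-k$, non-negative iff $k/d<1$. At each of the $d$ unramified points $Q_s$ above $\infty$, one has $\mathrm{ord}_{Q_s}(z)=-1$, $\mathrm{ord}_{Q_s}(dz)=-2$, and since $y^d=\prod(z-x_i)$ has pole of order $n$ at $Q_s$, $\mathrm{ord}_{Q_s}(y)=-n/d$. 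Thus $\mathrm{ord}_{Q_s}(\omega_{j,k})= kn/d - j - 1$, non-negative iff $j<kn/d$, i.e.\ $j/n<k/d$. The case $d\nmid n$ with $\gcd(d,n-1)=1$ is parallel after replacing the $d$ unramified points over $\infty$ by a single totally ramified point; a local uniformizer $t$ with $\mathrm{ord}_t(z)=-d$ and $\mathrm{ord}_t(y)=-(n-1)$ is constructed by solving $bd-a(n-1)=1$ (possible by $\gcd(d,n-1)=1$), and the same pair of inequalities re-emerges after the corresponding order computation.

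For linear independence, the Galois generator acts by $y\mapsto \lambda y$, so $\omega_{j,k}\mapsto \lambda^{-k}\omega_{j,k}$, placing forms with distinct $k$ in distinct character spaces; for fixed $k$, the $\omega_{j,k}$ pull back from the linearly independent meromorphic forms $z^{j-1}dz$ on $\mathbb{P}^1$ via the degree-$d$ map. To conclude these constitute a basis of $H^{1,0}(C_{\mu,d})$, I would evaluate the total count $\sum_{k=1}^{d-1}(\lceil nk/d\rceil - 1)$ and verify it matches the Riemann--Hurwitz genus $g=(n-2)(d-1)/2$ obtained from $2g-2 = -2d + n(d-1)$. Together with the complex conjugate forms, this fills out the full $H^1(C_{\mu,d},\mathbb{C})$ of dimension $2g$.

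The eigen-Hodge numbers then follow by bookkeeping. Since $\omega_{j,k}$ is holomorphic of character $\lambda^{-k}$, it lies in $H^{1,0}_{\bar k}$, yielding $h^{1,0}_{\bar k}=\lceil nk/d-1\rceil$. Complex conjugation exchanges $H^{1,0}_{\bar k}$ with $H^{0,1}_{k}$, giving $h^{0,1}_{k}=h^{1,0}_{\bar k}$, and the identity $h^{1,0}_{k}=\lceil n(1-k/d)-1\rceil$ follows by reading off the count for index $d-k$ in place of $k$. The step requiring the most care is this final bookkeeping: handling ceiling/floor conventions uniformly in the two parity regimes of $nk/d$, and fixing the labeling convention for $H^1_k$ versus $H^1_{\bar k}$ consistently with the sign of the character $\lambda^{-k}$ carried by $\omega_{j,k}$ (so that, e.g., the eigenspaces of $\lambda^k$ and $\overline{\lambda^k}$ are not accidentally swapped in the final identification).
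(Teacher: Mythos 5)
The paper gives no proof of this lemma (it is imported from McMullen), so your strategy --- compute $\mathrm{ord}$ of $\omega_{j,k}$ at the ramification points and over $\infty$, separate characters, and match the total against the Riemann--Hurwitz genus --- is the right one, and it goes through cleanly when $d\mid n$. The gap is in the case $d\nmid n$, $\gcd(d,n-1)=1$. Your own local data at the unique totally ramified point $P$ over $\infty$, namely $\mathrm{ord}_P(z)=-d$, $\mathrm{ord}_P(dz)=-d-1$, $\mathrm{ord}_P(y)=-(n-1)$, give
\[
\mathrm{ord}_P\Bigl(\tfrac{z^{j-1}dz}{y^k}\Bigr)=k(n-1)-dj-1,
\]
which is nonnegative iff $j/(n-1)<k/d$ (using $\gcd(d,n-1)=1$), not iff $j/n<k/d$; so ``the same pair of inequalities re-emerges'' is exactly the step that fails. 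The two conditions genuinely differ (for $n=8$, $d=6$, $k=5$ the first gives $j\le 5$, the second $j\le 6$), and the Riemann--Hurwitz check you propose would have caught it: $\sum_{k=1}^{d-1}\bigl(\lceil (n-1)k/d\rceil-1\bigr)=(n-2)(d-1)/2=g$ for the curve of \eqref{eq:CurveFromPoints2}, whereas $\sum_{k=1}^{d-1}\lceil nk/d-1\rceil$ equals $17>15=g$ in that example. The formula $\lceil nk/d-1\rceil$ is McMullen's count for the model $y^d=\prod_{i=1}^{n}(z-x_i)$ with $n$ finite branch points and the induced (possibly partial) ramification over $\infty$; if you want to prove the lemma as stated you must run the divisor computation on that model (where, e.g., for $\gcd(d,n)=e$ there are $e$ points over $\infty$ of index $d/e$ and the condition $j/n<k/d$ does come out), and then address how that model relates to the paper's normalization \eqref{eq:CurveFromPoints2}; as written, your setup and your conclusion do not match.

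A second, smaller point: your bookkeeping lands on the $k\leftrightarrow\bar k$ swap of the displayed identities. With the honest pullback $\rho^*\omega_{j,k}=\lambda^{-k}\omega_{j,k}$ you conclude $h^{1,0}_{\bar k}=\lceil nk/d-1\rceil$, while the lemma and Table \ref{tab:ancestorCases} require $h^{1,0}_{k}=\lceil nk/d-1\rceil$. You flag the convention issue but do not resolve it; the resolution is that the representation $A_d\to\Aut(H^1)$ must be $g\mapsto (g^{-1})^{*}$ in order to be covariant, and with that convention $\omega_{j,k}$ lies in $H^{1,0}_{k}$ as the lemma asserts.
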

To describe the eigenperiod map, let $\mathbb{C}^{ \langle n \rangle}$ be the moduli space of $n$ unlabelled distinct points on $\mathbb{C}$. We need to discuss some aspects on the monodromy representation of the fundamental group of $\mathbb{C}^{ \langle n \rangle}$ which is known as the braid group $B_n$. Let fix $\lambda^{k}$ with $\frac{1}{n} < \frac{k}{d} < 1$
where $\lambda := e^{\frac{2 \pi i}{d}}$ is the $d$-th primitive root of unity,
and let's consider the $\mathbb{Z}[\lambda^k]$- module defined as
$
\Lambda_{n,d,k}:= H^1_k(C_{\mu, d}, \mathbb{Z}[\lambda^k]) .
$
The unitary automorphisms preserving this module form the group
\begin{align}
\mathrm{U}(\Lambda_{n,d,k}) \subset  \mathrm{U}\left( H^1_k(C_{\mu, d}) 
\right) \cong \mathrm{U}(r,s).    
\end{align}
where $\mathrm{U}(r,s)$ is the unitary group with mixed signature $(r,s)$.
The fundamental groups of the moduli spaces introduced in the previous sections are related by the following result.
\begin{lemma}
\label{lemma:FundGroups}
Let $d \geq 2$ and $n \geq 5$ be integers such that gcd$(d,n-1)=1$, then the following diagram commutes
\[
\xymatrix{ 
B_{n-1}:=\pi_{1}\left( \mathbb{C}^{n-1}/S_{n-1} \right)
\ar[d] \ar[r] 
& \mathrm{U}(r,s) \ar[d] \\
\mathrm{Mod}^*_{0,n-1}
:= \pi_1\left(
M_{0,n}/S_{n-1}
\right)
\ar[r]
&  \mathrm{PU}(r,s)
}
\]
If moreover $d | n$, we also have the commutative diagram:
\[
\begin{tikzcd}
B_n:= \pi_1\left( \mathbb{C}^n/S_n \right) \arrow[r] \arrow[d]
& \mathrm{U}(r,s) \arrow[d] \\
\mathrm{Mod}_{0,n} 
:= \pi_1\left(
M_{0,n}/S_n
\right)
\arrow[r]
&  \mathrm{PU}(r,s)
\end{tikzcd}.
\]
\end{lemma}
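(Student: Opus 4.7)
The plan is to construct the horizontal maps as monodromy representations of the polarized variation of Hodge structure on the $\lambda^k$-eigenspace $H^1_k$, and to verify commutativity by computing the monodromy of a generator of $\ker(B_{n-1}\twoheadrightarrow \mathrm{Mod}^*_{0,n-1})$ and showing it acts as a scalar.

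First, I would regard $\mathcal{C}_{\mu,d}$ from Equation \eqref{eq:CurveFromPoints2} as a flat $S_{n-1}$-equivariant family over the ordered configuration space $(\mathbb{C}^{n-1})^\circ$ of $n-1$ distinct points in $\mathbb{C}$. Descending to $\mathbb{C}^{\langle n-1\rangle}/S_{n-1}$ and restricting the associated PVHS to the $\lambda^k$-eigenspace yields a unitary local system whose monodromy is the top horizontal $B_{n-1}\to \mathrm{U}(\Lambda_{n,d,k})\subset \mathrm{U}(r,s)$. The right vertical is the canonical projection $\mathrm{U}(r,s)\to \mathrm{PU}(r,s)$. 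For the left vertical, the natural quotient
\[
\mathbb{C}^{\langle n-1\rangle}/S_{n-1} \longrightarrow M_{0,n}/S_{n-1},
\]
obtained by adjoining $\infty$ as the $n$-th marked point and modding out by $\mathrm{Aut}(\mathbb{C})$, is a fiber bundle with connected fiber $\mathrm{Aut}(\mathbb{C})=\mathbb{G}_m\rtimes \mathbb{C}$. Since $\pi_1(\mathrm{Aut}(\mathbb{C}))\cong\mathbb{Z}$ is generated by the $\mathbb{G}_m$-loop $\gamma(t)\colon z\mapsto e^{2\pi i t}z$, the long exact sequence of homotopy groups identifies $B_{n-1}\twoheadrightarrow\mathrm{Mod}^*_{0,n-1}$ as the quotient by $\langle\gamma\rangle$.

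The heart of the proof is showing that the monodromy of $\gamma$ acts by a scalar on $H^1_k$. Along the deformation $x_i\mapsto e^{2\pi i t}x_i$, the right-hand side of the defining equation $y^d = z_1(z_0-x_1z_1)\cdots(z_0-x_{n-1}z_1)$ scales by $e^{2\pi i n t}$, forcing the equivariant lift to act on $y$ by $y\mapsto e^{2\pi i n t/d}y$. At $t=1$ this is the Galois element $\rho^n\in A_d$, which by definition of the eigenspace decomposition acts on $H^1_k$ as multiplication by the scalar $\lambda^{kn}$. Hence the image of $\gamma$ lies in the center of $\mathrm{U}(r,s)$, i.e.\ in the kernel of $\mathrm{U}(r,s)\to \mathrm{PU}(r,s)$, so the projectivized representation descends to the desired bottom horizontal $\mathrm{Mod}^*_{0,n-1}\to \mathrm{PU}(r,s)$ and the first square commutes.

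The second diagram (case $d\mid n$) is entirely analogous: one replaces the configuration of $n-1$ points in $\mathbb{C}$ with the configuration of $n$ unordered points in $\mathbb{P}^1$, and uses Equation \eqref{eq:CurveFromPoints} in place of Equation \eqref{eq:CurveFromPoints2}; the scaling computation producing the Galois scalar is formally identical. The main obstacle I anticipate is the bookkeeping in the equivariant lift: one must verify that a single-valued branch of $y\mapsto e^{2\pi i n t/d}y$ is well defined along the entire loop and that it yields precisely $\rho^n$, which requires careful tracking of the branches of the cyclic cover over the path in the configuration space.
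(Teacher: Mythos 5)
Your proposal is correct in outline for the first square, and it is more self-contained than the paper's own proof, which consists essentially of a citation to McMullen's diagram (2.5) together with a discussion of when the branched cover of $\mathbb{P}^1$ with ramification degree $d-1$ at each of the $n$ points exists. You reconstruct the content of that citation: the kernel of $B_{n-1}\twoheadrightarrow \mathrm{Mod}^*_{0,n-1}$ is generated by the full twist, whose monodromy is a deck transformation and hence a scalar on each eigenspace. Two small corrections there. First, in the affine chart of Equation \eqref{eq:CurveFromPoints2} the right-hand side scales by $e^{2\pi i (n-1)t}$, not $e^{2\pi i n t}$ (the factor $z_1$ is inert under $z\mapsto e^{2\pi i t}z$, which fixes $0$ and $\infty$), so the equivariant lift closes up to $\rho^{n-1}$ rather than $\rho^{n}$; this is harmless since any power of $\rho$ acts on $H^1_k$ by a scalar, but note that $\rho^{n-1}$ is the element that is guaranteed nontrivial by $\gcd(d,n-1)=1$. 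Second, $\mathrm{Aut}(\mathbb{C})$ does not act freely on $\mathbb{C}^{\langle n-1\rangle}$ (configurations with rotational symmetry exist), so the quotient is an orbifold and your long exact sequence should be read for orbifold fundamental groups; the paper is equally cavalier about this point.

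The genuine gap is in your final paragraph: the second diagram is not ``entirely analogous.'' The top-left entry is still $\pi_1$ of configurations of $n$ points in $\mathbb{C}$, while the bottom-left is $\pi_1(M_{0,n}/S_n)$, configurations in $\mathbb{P}^1$ modulo $\mathrm{PGL}_2$; the kernel of $B_n\to \mathrm{Mod}_{0,n}$ is therefore strictly larger than the center of $B_n$. Passing from $\mathbb{C}$ to $\mathbb{P}^1$ introduces the additional relation that a point may be pushed around the back of the sphere, i.e.\ the braid $\sigma_1\sigma_2\cdots\sigma_{n-1}^2\cdots\sigma_2\sigma_1$ (a loop of $x_1$ around all the other points, equivalently the inverse of a loop around $\infty$) becomes trivial, and one must then also quotient by the $\mathrm{PGL}_2$-action. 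Your scaling computation only disposes of the full twist. One must separately verify that this extra normal generator maps to a scalar in $\mathrm{U}(r,s)$, and this is precisely where the hypothesis $d\mid n$ does its work: for the cover \eqref{eq:CurveFromPoints} the local monodromy of the covering around $\infty$ is $\rho^{\pm n}$, which is trivial exactly when $d\mid n$, so the loop around infinity acts trivially on $H^1_k$ and the relation is respected. As written, your argument for the second square never uses $d\mid n$, which is the symptom of this missing step.
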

\begin{proof}
It follows from \cite[(2.5)]{McM13}. The main difference between our presentation and the original one is that in \cite{McM13}, there is sometimes an implicit additional branch point at infinity; for instance, see the construction in \cite[8.5]{McM13}. The presence or absence of this implicit branch point induces the two cases we describe.  Indeed, the diagram exists if there exists a degree-$d$ covering map $C\rightarrow \mathbb{P}^1$ branched along $n$ points with the ramification degree of each point being $d-1$. If gcd$(d,n-1)=1$, such branched cover can be obtained from completing the degree-$d$ cover over $\mathbb{C}$ branched along $n-1$ points as the infinity is automatically branched with the same degree; If we also have $d|n$, then we could also assume all $n$ branched points are finite as the infinity is not branched in this case.
\end{proof}
Therefore, the period map described in 
Definition ~\ref{def:EigenPeriod} induces the  monodromy representation 
$
\pi: B_n 
\longrightarrow
\operatorname{Aut}(H^1(C_{\mu_0, d}))
$
which factors through either the mapping class group 
$\mathrm{Mod}^{*}_{0,n-1}$ or 
$\mathrm{Mod}_{0,n}$.
 By considering the eigenspace $H^1_k(C_{\mu_0, d})$ we obtain the monodromy representation
$$
\pi_k: B_n 
\longrightarrow \mathrm{U}(\Lambda_{n,d,k}) \subset \mathrm{U}(r,s)
$$
whose behavior we describe next. 

\begin{theorem}
\cite[Theorem 7.1]{McM13}
\label{thm:Mcmullen}
The values of $n \geq 5$ 
and $\lambda^k \neq 1$ such that $\mathrm{U}(\Lambda_{n,d,k})$ is a discrete subgroup 
$\mathrm{U}\left(H^1_k(C_{\mu,d}) \right) \cong \mathrm{U}(r,s)$ are those given in Table \ref{tab:ancestorCases} and their complex conjugates. In these cases, either:
\begin{itemize}
    \item $\mathrm{U}(\Lambda_{n,d,k})$ is an arithmetic lattice in $\mathrm{U}(r,s)$, or
    \item We have $k=d/2$ and $\mathrm{U}(\Lambda_{n,d,k}) \cong \mathrm{Sp}_{2g}(\mathbb{Z})$ is a lattice in $\mathrm{Sp}_{2g}(\mathbb{R}) \subset \mathrm{U}(g,g)$ 
\end{itemize}
Moreover, the image $\pi_k(B_n)$ of the braid group  is discrete in all these cases. 
\end{theorem}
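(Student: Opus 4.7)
The plan is to identify the monodromy representation of $B_n$ on $H^1_k(C_{\mu,d})$ with the Deligne-Mostow projective monodromy attached to uniform weights $\mu = (k/d, \ldots, k/d)$ on $\mathbb{P}^1$ (with an extra weight $1 - (n-1)k/d$ at infinity when $d \nmid n$), and then apply the classical integrality criteria. Concretely, the explicit basis $\{z^{j-1}\,dz/y^k\}$ from the holomorphic differentials lemma makes each half-twist $\sigma_i \in B_n$ act as a complex reflection on the eigenspace $H^1_k$, and these reflections are exactly those defining the Deligne-Mostow hypergeometric system. This places $\pi_k(B_n)$ inside the Hermitian lattice $\mathrm{U}(\Lambda_{n,d,k}) \subset \mathrm{U}(r,s)$ preserving the integral $\mathbb{Z}[\lambda^k]$-form, so the question of discreteness of $\pi_k(B_n)$ is reduced to that of $\mathrm{U}(\Lambda_{n,d,k})$.

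Using the eigen-Hodge formulas $h^{1,0}_k = \lceil nk/d - 1 \rceil$ and $h^{0,1}_k = \lceil n(1 - k/d) - 1 \rceil$, I would split into regimes by signature. When $\min(r,s) = 1$, the period domain $D_{n,d,k}$ is a complex hyperbolic ball and discreteness of $\mathrm{U}(\Lambda_{n,d,k})$ in $\mathrm{U}(1, r+s-1)$ is governed by Deligne-Mostow's INT condition (or Mostow's $\Sigma$INT condition after the $S_n$-quotient) on the weights; enumerating the uniform-weight pairs $(n, k/d)$ satisfying INT/$\Sigma$INT with $n \geq 5$ is a finite check and produces the ball-quotient entries of Table \ref{tab:ancestorCases}. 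When $k = d/2$, the eigenvalue $\lambda^k = -1$ is real, so $H^1_{d/2}$ carries a canonical real structure and the Hermitian form becomes a unimodular symplectic form of rank $2g$; the monodromy then factors through $\mathrm{Sp}_{2g}(\mathbb{Z}) \subset \mathrm{Sp}_{2g}(\mathbb{R}) \subset \mathrm{U}(g,g)$, which is an arithmetic lattice by Borel-Harish-Chandra.

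For the remaining cover data I would prove non-discreteness by comparing the signature of the Hermitian form at $\lambda^k$ with the signatures at the Galois-conjugate embeddings of $\mathbb{Q}(\lambda^k)$: Borel-Harish-Chandra ensures that the lattice $\mathrm{U}(\Lambda_{n,d,k})$ has finite covolume only when these signatures are compatible with a bounded symmetric domain condition (rank one, or symplectic), and otherwise the image is dense in $\mathrm{U}(r,s)$. The main obstacle is the INT/$\Sigma$INT enumeration in the second step, which must be carried out separately in the regimes $d \mid n$ and $d \nmid n$ with $\gcd(d, n-1) = 1$, and cross-referenced against the sporadic Mostow examples where strict half-integrality replaces integrality. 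A secondary difficulty is the non-discreteness side: it requires the Zariski density of $\pi_k(B_n)$ in the ambient Hermitian group, which follows from the irreducibility of the braid monodromy on each eigenspace $H^1_k$ established in \cite{McM13}.
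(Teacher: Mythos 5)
First, note that the paper does not prove this statement at all: it is quoted as \cite[Theorem 7.1]{McM13}, so the only proof on record is McMullen's, and your sketch must be measured against that. Against it, there is a genuine gap. The theorem classifies when the \emph{full integral unitary group} $\mathrm{U}(\Lambda_{n,d,k})$ is discrete, and the mechanism is arithmeticity: $\mathrm{U}(\Lambda_{n,d,k})$ is discrete in $\mathrm{U}(r,s)$ if and only if the Hermitian form becomes definite at every Galois-conjugate embedding of $\mathbb{Q}(\lambda^k)$ other than the ones giving $\lambda^{\pm k}$ (plus the degenerate case $\lambda^k=-1$, where the form is symplectic over $\mathbb{Z}$); one direction is Borel--Harish-Chandra via restriction of scalars, the other is a density argument, and the table is then produced by evaluating the signature formulas $h^{1,0}_k=\lceil nk/d-1\rceil$, $h^{0,1}_k=\lceil n(1-k/d)-1\rceil$ at all conjugate characters. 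Your proposal instead routes discreteness through the Deligne--Mostow INT/$\Sigma$INT condition in the ball case and uses the Galois-conjugate signature comparison only to prove non-discreteness. This conflates two different discreteness questions: INT governs discreteness of the monodromy image $\pi_k(B_n)$ (and famously produces non-arithmetic lattices, for which $\mathrm{U}(\Lambda)$ is \emph{dense}), not discreteness of $\mathrm{U}(\Lambda_{n,d,k})$; enumerating uniform-weight INT cases would therefore yield the wrong list.

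Moreover, your trichotomy ``ball quotient via INT, or $k=d/2$ symplectic, or non-discrete'' is contradicted by Table \ref{tab:ancestorCases} itself: entries such as $(n,k/d)=(7,3/10)$ with signature $(2,4)$, $(8,5/12)$ with $(3,4)$, or $(12,5/12)$ with $(4,6)$ are discrete, have $\min(r,s)\geq 2$, and are not symplectic --- they are arithmetic lattices in higher-rank $\mathrm{U}(r,s)$, precisely the case your plan declares non-discrete. Relatedly, the claim that Borel--Harish-Chandra gives finite covolume ``only when these signatures are compatible with a bounded symmetric domain condition (rank one, or symplectic)'' is not correct: the criterion is definiteness at all the other embeddings, with no restriction on the signature $(r,s)$ at the distinguished one. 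The repair is to drop INT entirely, prove the arithmeticity criterion in both directions, and carry out the signature computation at every conjugate $\lambda^{kl}$ with $\gcd(l,d)=1$; the identification of the braid action with complex reflections and the final remark that $\pi_k(B_n)\subset\mathrm{U}(\Lambda_{n,d,k})$ forces discreteness of the braid image are the parts of your sketch that do survive.
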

The behavior of the monodromy representation described in 
Lemma \ref{lemma:FundGroups} and Theorem \ref{thm:Mcmullen}
implies that we should consider two cases when defining the period map. If $d$ divides $n$, then by any $d$-th branched cover of $\mathbb{C}$ branching over $n$ points can be completed to a branched cover of $\mathbb{P}^1$ over the same points, as the infinity is unramified. On the other hand, if $\gcd(d,n-1)=1$, then a $d$-th branched cover of $\mathbb{C}$ branching along $(n-1)$ points can be completed to a branched cover of $\mathbb{P}^1$ over $n$ points, as the ramification index of infinity is $d$.  


\begin{table}
\caption{
Cases when the image of the braid group $\pi_q(B_n)$ is discrete in $\mathrm{U}(\Lambda_{n,d,k})$ for $n\geq 5$, see Theorem \ref{thm:Mcmullen}.
Here, $n$ denotes the number of points. $k/d$ cover data,  
 We only include the cases with $r,s \geq 1$.
    }
\setlength{\tabcolsep}{10pt}
\renewcommand{\arraystretch}{1.4}    
    \begin{tabular}
    {|>{\centering\arraybackslash}m{0.4cm}|>{\centering\arraybackslash}m{0.8cm}|>
    {\centering\arraybackslash}m{0.8cm}|
    >{\centering\arraybackslash}m{1.5cm}|
    }
     \hline
        $n$ &  $k/d$ & (r,s) & $H(n,d,k)$
    \\  \hline
    5 & $1/4$ & (1,3) &   - 
    \\ 
    5 & $3/10$ & (1,3) &  - 
    \\ 
    5 & $1/3$ & (1,3)  &  2 
    \\ 
    5 & $3/8$ & (1,3)  &  - 
    \\ 
    5 & $2/5$ & (1,2)  &  $\infty$  
    \\ 
    5 & $5/12$ & (2,2) &  - 
    \\ 
    5 & $1/2$ & (2,2)  & - 
    \\ 
    6 & $1/4$ & (1,4) & 3 
    \\ 
    6 & $3/10$ & (1,4) &  - 
    \\ 
    6 & $1/3$ & (1,3) &  3 
    \\ 
    6 & $3/8$ & (2,3) &   $\infty$  
    \\ 
    6 & $5/12$ & (2,3)&   $\infty$  
    \\ 
    6 & $1/2$ & (2,2) &   1   
    \\ 
    7 & $1/6$ & (1,5) &  - 
    \\
    7 & $1/4$ & (1,5) & - 
    \\ 
    7 & $3/10$ & (2,4) &  - 
    \\ 
    7 & $1/3$ & (2,4) & - 
    \\ 
    7 & $3/8$ & (2,4) &  -  
    \\ 
    7 & $5/12$ & (2,4) & - 
    \\ 
    7 & $1/2$ & (3,3) & -   
    \\ 
    8 & $1/6$ & (1,6) &  5 
    \\
    8 & $1/4$ & (1,5) &  5  
    \\ 
    8 & $3/10$ & (2,5)&   $\infty$  
    \\ 
    8 & $1/3$ & (2,5) &  2  
    \\ 
    8 & $3/8$ & (2,4) &  $\infty$  
    \\ 
    8 & $5/12$ & (3,4) &  $\infty$ 
    \\ 
    8 & $1/2$ & (3,3) &  1    
    \\ 
    \hline   
    \end{tabular}
    \begin{tabular}
    {|>{\centering\arraybackslash}m{1.3cm}|>{\centering\arraybackslash}m{0.8cm}|>
    {\centering\arraybackslash}m{3.4cm}|
    >{\centering\arraybackslash}m{1.5cm}|
    }
     \hline
    n &  k/d & (r,s) & $H(n,d,k)$
    \\  \hline
    9 & $1/6$ & (1,7) &  - 
    \\ 
    9 & $1/4$ & (2,6) &  - 
    \\ 
    9 & $3/10$ & (2,6) &  - 
    \\ 
    9 & $1/3$ & (2,5) & 2 
        \\ 
    9 & $5/12$ & (3,5) & -  
    \\ 
    9 & $1/2$ & (4,4) &  -  
    \\ 
    10 & $1/6$ & (1,8) &  -  
    \\
    10 & $1/4$ & (2,7) &   3 
    \\ 
    10 & $3/10$ & (2,6) &  $\infty$
    \\ 
    10 & $1/3$ & (3,6)  &  - 
    \\ 
    10 & $5/12$ & (4,5) &  - 
    \\ 
    10 & $1/2$ & (4,4) &  1 
    \\ 
    11 & $1/6$ & (1,9) &  - 
    \\
    11 & $1/4$ & (2,8)  &  - 
    \\ 
    11 & $1/3$ & (3,7) &  2 
    \\ 
    11 & $5/12$ & (4,6) & -   
    \\ 
    11 & $1/2$ & (5,5) & -  
    \\ 
    12 & $1/6$ & (1,9) & 9   
    \\ 
    12 & $1/4$ & (2,8) & 3  
    \\ 
    12 & $1/3$ & (3,7) &  2 
    \\ 
   12 & $5/12$ & (4,6) &  $\infty$  
    \\ 
   12 & $1/2$ & (5,5)  &  1   
    \\ \hline    
     & $1/6$ & 
$\left(  
\lceil \frac{n}{6}-1 \rceil,  \lceil 
\frac{5n}{6}-1 
\rceil    
\right)$ 
& 5
    \\
$n > 12$    & $1/4$ & 
$\left(  \lceil \frac{n}{4}-1 \rceil,  \lceil 
\frac{3n}{4}-1  \rceil     \right)$    
& 3
    \\
     & $1/3$ &
$\left( \lceil \frac{n}{3}-1 \rceil,  \lceil 
 \frac{2n}{3}-1  \rceil     \right)$  
 & 2
    \\
    & $1/2$ &$\left( \lceil \frac{n}{2}-1 \rceil,  \lceil \frac{n}{2}-1 \rceil    \right)$  
    & 1
 \\   
 & &  &
  \\   \hline 
    \end{tabular}
    \label{tab:ancestorCases}    
\end{table}

\begin{proposition}
\label{prop:eigenperiod}
Given $n,d,k$ as in Table \ref{tab:ancestorCases}, 
the eigenperiod map is defined as
\begin{align*}
M_{0,n}/S_n \xrightarrow{\phi_{n,d,k}}  
\Gamma_{n,d,k} \backslash D_{n,d,k}    
&&
\Gamma_{n,d,k}:= \pi_{d,k}(\mathrm{Mod}_{0,n})
\end{align*}
if $d|n$, and as
\begin{align*}
M_{0,n}/S_{n-1} \xrightarrow{\phi_{n,d,k}}  
\Gamma_{n,d,k}^{} \backslash D_{n,d,k}   
&& 
\Gamma_{n,d,k}^{}:= \pi_{d,k}(\mathrm{Mod}^*_{0,n-1})
\end{align*}
if $d\nmid n$ and $\gcd(d,n-1)=1$, factors through the quotient of a Mumford-Tate subdomain by $\Gamma_{n,d,k}$. In other words, the eigenperiod map is not Hodge-generic. 
\end{proposition}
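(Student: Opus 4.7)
The target $D_{n,d,k}$ only records the filtration on the complex vector space $H^1_k(C_\mu,\mathbb{C})$, whereas the VHS feeding into it carries substantial additional $\mathbb{Q}$-rational structure inherited from the cyclic cover. The plan is to exhibit a proper $\mathbb{Q}$-algebraic subgroup of the generic Mumford--Tate group of the ambient Shimura datum on $D_{n,d,k}$, in which the Mumford--Tate group of every fiber of the eigen-VHS is forced to lie, and to identify its real orbit with the advertised MT subdomain.

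First, I would work with the full polarized $\mathbb{Q}$-VHS $\mathcal{V}:=R^1\pi_*\mathbb{Q}$ on $M_{0,n}$ attached to $\pi:\mathcal{C}_{\mu,d}\to M_{0,n}$. Because the cyclic group $A_d$ acts on the total space of the family, the induced algebra homomorphism $\rho^*:\mathbb{Q}[A_d]\hookrightarrow\End(\mathcal{V})$ is flat with respect to the Gauss--Manin connection, and each $\rho^*(g)$ is a Hodge endomorphism pointwise. Standard MT theory then gives the inclusion $\mathrm{MT}(\mathcal{V}_\mu)\subseteq Z:=Z_{\mathrm{Sp}(H^1,Q)}(\rho^*(A_d))$ of $\mathbb{Q}$-algebraic groups. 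Decomposing $H^1(C_\mu,\mathbb{Q})$ as a $\mathbb{Q}[A_d]$-module, $Z$ splits as a product indexed by divisors $e\mid d$, whose factor at $e$ is a unitary group defined over $\mathbb{Q}(\zeta_e)$ with respect to the Hermitian form induced by $Q$ (or a symplectic/orthogonal factor for $e\leq 2$), viewed as a $\mathbb{Q}$-group via Weil restriction $\mathrm{Res}_{\mathbb{Q}(\zeta_e)^+/\mathbb{Q}}$.

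Second, projecting to the factor containing the $k$-eigenspace yields a $\mathbb{Q}$-subgroup $M_k$ of the automorphism group of the Hermitian space $(H^1_k,h_k)$ over $\mathbb{Q}(\zeta_d)^+$. By construction the eigenperiod map $\phi_{n,d,k}$ factors through $M_k(\mathbb{R})\cdot F_0\subseteq D_{n,d,k}$ for any reference filtration $F_0$, and modding out by the arithmetic monodromy group gives the claimed factorization through the quotient of a Mumford--Tate subdomain by $\Gamma_{n,d,k}$. The statement that this MT subdomain is proper, equivalently that $\phi_{n,d,k}$ is not Hodge-generic in $D_{n,d,k}$, amounts to showing that the $\mathbb{Q}$-form $M_k$ (Weil-restricted unitary group over the totally real subfield $\mathbb{Q}(\zeta_d)^+$) is a proper sub-$\mathbb{Q}$-algebraic group of the absolute $\mathbb{Q}$-form of $U(r_k,s_k)$ obtained by treating $H^1_k$ as an abstract $\mathbb{Q}$-vector space. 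The cyclotomic $\mathbb{Q}$-structure imposes consistency constraints across the archimedean embeddings of $\mathbb{Q}(\zeta_d)^+$ that are absent from a Hodge-generic HS on $H^1_k$, so the inclusion is strict whenever $d\geq 3$.

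The main technical obstacle is justifying this last comparison of $\mathbb{Q}$-forms and confirming that the MT inclusion is sharp, i.e.\ that $\mathrm{MT}(\mathcal{V}_\mu)$ really projects onto $M_k$ rather than a smaller group (otherwise the factorization exists but the advertised MT subdomain would have to be shrunk). For the data $(n,d,k)$ in Table~\ref{tab:ancestorCases} this is a consequence of Theorem~\ref{thm:Mcmullen}: the monodromy representation $\pi_k(B_n)\subseteq \mathrm{U}(\Lambda_{n,d,k})$ is discrete and, in the arithmetic cases, Zariski-dense in the Weil-restricted unitary group, so $\mathrm{MT}(\mathcal{V}_\mu)$ must contain the algebraic monodromy and thus coincides with $M_k$. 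In the remaining symplectic case $k=d/2$, the analogous argument using $\mathrm{Sp}_{2g}(\mathbb{Z})\hookrightarrow\mathrm{U}(g,g)$ supplies the same conclusion.
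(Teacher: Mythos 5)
Your first two paragraphs follow essentially the same route as the paper: the $A_d$-action gives flat Hodge endomorphisms, so the Mumford--Tate (equivalently Hodge) group lands in the centralizer $Z_{\mathrm{Sp}(H^1,Q)}(\rho^*(A_d))$, which decomposes into Weil-restricted unitary factors, and the factor seeing the $k$-eigenspace is exactly the paper's $G=\mathrm{Res}_{\mathbb{Q}(q)/\mathbb{Q}}(\mathrm{SL}(V))\cap \mathrm{Sp}(\tilde{V},Q)$. That part is fine and is the content of the proof.

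Where you go astray is in locating the ambient domain in which the Mumford--Tate subdomain is \emph{proper}. You assert that $\phi_{n,d,k}$ factors through $M_k(\mathbb{R})\cdot F_0\subseteq D_{n,d,k}$ and then try to show this is a proper inclusion by comparing $M_k$ with ``the absolute $\mathbb{Q}$-form of $U(r_k,s_k)$ obtained by treating $H^1_k$ as an abstract $\mathbb{Q}$-vector space.'' Two problems: (a) $H^1_k$ is only defined over $\mathbb{Q}(\zeta_d)$ (it is a complex eigenspace), so there is no such absolute $\mathbb{Q}$-form to compare with; and (b) the real unitary group at the distinguished archimedean place already acts transitively on $D_{n,d,k}$ as defined in \eqref{eq:Eigenspace}, so $M_k(\mathbb{R})\cdot F_0=D_{n,d,k}$ and your inclusion is an equality, yielding nothing. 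The correct statement, and the one the paper makes, is that $D_{n,d,k}\cong \mathrm{U}(r,s)/\mathrm{U}(r)\times\mathrm{U}(s)$ is itself the Mumford--Tate subdomain, sitting \emph{properly} inside the period domain $\mathrm{Sp}(\tilde V,Q)(\mathbb{R})/\mathrm{U}(r+s)$ of the weight-one Hodge structures on the underlying rational symplectic space $\tilde V$ of $H^1_k\oplus H^1_{\bar k}$; ``not Hodge-generic'' refers to that ambient Siegel-type domain, not to $D_{n,d,k}$. Finally, your last paragraph on sharpness of the MT inclusion via Zariski density of the monodromy (Theorem \ref{thm:Mcmullen}) is a reasonable supplement but is not needed for Proposition \ref{prop:eigenperiod}: the factorization only requires the upper bound $\mathrm{MT}\subseteq M_k$, not equality.
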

\begin{proof}
We follow the argument in \cite[Sec 3.1]{gallardo2021geometric}.
$V_{\mathbb{C}}:= H^1_{\chi}\oplus H^1_{\bar{\chi}}$.
$G:=\mathrm{Res}_{\mathbb{Q}(q)/\mathbb{Q}}(\mathrm{SL}(V))\cap \mathrm{Sp}(\tilde{V},Q)$ is the Hodge group of the period map which is a rational form of $\mathrm{U}(r,s)$, therefore, the image of the period map in $\Gamma \backslash \mathrm{Sp}(V)/\mathrm{U}(r+s)$ factors through the (generalized) ball quotient
$\Gamma \backslash \mathrm{U}(r,s)/ \mathrm{U}(r)\times \mathrm{U}(s)$. The case $\gcd(d,n-1)=1$ follows by the discussion within
\cite[Sec 2]{McM13}.
\end{proof}
\begin{remark}
The period map from Proposition 
\ref{prop:eigenperiod} is coming from a $\mathbb{C}$-VHS, and in occasion is convenient to have a period map define over a number field, for more details see \cite[Sec 7]{dolgachev2007moduli}.
 For that purpose,  we may also consider the eigenperiod map 
$\phi_{n,d,k,\overline{k}}$
defined by the Hodge decomposition within
$$
H^1_{k}(C_{\mu,d})\bigoplus H^1_{\bar{k}}(C_{\mu,d}).
$$
In this case, we obtain a $\mathbb{Q}(\mathrm{cos}(\frac{2\pi}{d}))$ - VHS of type $(2d-2,2d-2)$.    \end{remark}

By \cite[(2.5)]{McM13}, we have that
$D_{n,d,k} \cong \text{II}(r,s)\cong \mathrm{Sp}(2r+2s)/\mathrm{U}(r+s)$, and $\Gamma_{n,d,k}$ is  a discrete lattice in $\mathrm{PU}(r,s)$. Moreover, we can compose the map $\phi_{n,d,k}$ with the natural finite map $M_{0,n} \to M_{0,n}/S_m$ to obtain the following result. 
\begin{proposition}
\label{prop:EigenPeriodM0n}
Let $n$, $k$ and $d$ be integers as listed in Table \ref{tab:ancestorCases}.
If $\gcd(d, n-1) = 1$, we have the following commutative diagram
\begin{align*}
\xymatrix{
\widehat{M}_{0,n} \ar[d] \ar[rr]
^{\widehat{\phi}_{n,d,k}}
&& D_{n,d,k}
\ar[d]
\\
M_{0,n} \ar[rr]^{\Phi_{n,d,k}} && 
\Gamma_{n,d,k}\backslash D_{n,d,k}
}
\end{align*}
obtained by composing the period map from Proposition \ref{prop:eigenperiod} with the relevant finite quotient by a symmetric group $S_m$, $m=n$ or $n-1$.
\end{proposition}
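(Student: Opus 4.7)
The plan is to build $\Phi_{n,d,k}$ as the stated composition and then check commutativity directly. First I would observe that the symmetric group $S_m$ (with $m = n$ or $n-1$) acts freely on $M_{0,n}$, since all labelings in a configuration are distinct, so the quotient $q:M_{0,n}\to M_{0,n}/S_m$ is a finite étale cover. Consequently, the universal cover $\widehat{M}_{0,n}$ of $M_{0,n}$ is canonically identified with the universal cover of $M_{0,n}/S_m$.

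With this identification, I would define $\Phi_{n,d,k}:=\phi_{n,d,k}\circ q$, where $\phi_{n,d,k}$ is the eigenperiod map from Proposition \ref{prop:eigenperiod}. Both $\phi_{n,d,k}$ and $\Phi_{n,d,k}$ are built from the same polarized variation of Hodge structure on the family $\mathcal{C}_{\mu,d}\to M_{0,n}$ of cyclic covers, restricted to the $k$-th eigenspace (Section \ref{sec:hodgetheoryBackground}), so both lift to the universal cover to give the same holomorphic map $\widehat{\phi}_{n,d,k}$ of Definition \ref{def:EigenPeriod}. By construction via analytic continuation, $\widehat{\phi}_{n,d,k}$ is equivariant with respect to the monodromy representation $\pi_1(M_{0,n}/S_m)\to \Gamma_{n,d,k}$ provided by Lemma \ref{lemma:FundGroups} and Proposition \ref{prop:eigenperiod}; a fortiori it is equivariant with respect to the restriction of this representation to the finite-index subgroup $\pi_1(M_{0,n})\subset \pi_1(M_{0,n}/S_m)$. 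Composing $\widehat{\phi}_{n,d,k}$ with the quotient $D_{n,d,k}\to \Gamma_{n,d,k}\backslash D_{n,d,k}$ therefore descends through $\widehat{M}_{0,n}\to M_{0,n}$ to a holomorphic map whose composition with $q$ recovers $\phi_{n,d,k}$; this descended map is precisely $\Phi_{n,d,k}$, giving commutativity of the claimed square.

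There is essentially no substantive obstacle in this proposition beyond bookkeeping; it is a compatibility statement between two different quotients of a single equivariant map on the universal cover. The one point that warrants explicit verification is that the monodromy of $\pi_1(M_{0,n})$ indeed lands in the group $\Gamma_{n,d,k}$ originally defined as the image of the \emph{larger} group $\mathrm{Mod}_{0,n}$ (resp.\ $\mathrm{Mod}^*_{0,n-1}$) under $\pi_{d,k}$; but this is automatic because the monodromy of the étale subcover factors through that of the full quotient, so its image is a subgroup of $\Gamma_{n,d,k}$ and the target of $\Phi_{n,d,k}$ can be enlarged to $\Gamma_{n,d,k}\backslash D_{n,d,k}$ without affecting the commutativity of the diagram.
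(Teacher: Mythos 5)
Your overall strategy is the same as the paper's: the paper offers no separate proof and simply obtains $\Phi_{n,d,k}$ by composing $\phi_{n,d,k}$ from Proposition \ref{prop:eigenperiod} with the quotient $M_{0,n}\to M_{0,n}/S_m$, and your descent/equivariance bookkeeping fills in exactly that. One auxiliary claim you make is false, however: $S_m$ does \emph{not} act freely on $M_{0,n}$. The marked points being distinct does not prevent a nontrivial permutation from stabilizing a configuration, because a point of $M_{0,n}$ is an $\mathrm{SL}_2$-orbit and special configurations admit projective automorphisms permuting the $p_i$ (already for $n=4$ the harmonic and equianharmonic cross-ratios have nontrivial stabilizers, and such loci persist for all $n$). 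Consequently $q:M_{0,n}\to M_{0,n}/S_m$ is not \'etale, the quotient has orbifold points, and the ``canonical identification'' of $\widehat{M}_{0,n}$ with the universal cover of $M_{0,n}/S_m$ is not literally correct. Fortunately none of this is needed for the statement: the square only involves $\widehat{M}_{0,n}\to M_{0,n}$, and the argument goes through by noting that $\widehat{\phi}_{n,d,k}$ is equivariant for the monodromy representation of $\pi_1(M_{0,n})$, whose image lands inside $\Gamma_{n,d,k}$ (the image of the larger group $\mathrm{Mod}_{0,n}$ or $\mathrm{Mod}^*_{0,n-1}$), so it descends to $\Phi_{n,d,k}=\phi_{n,d,k}\circ q$ exactly as you say in your final paragraph. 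You should simply delete the freeness claim and the appeal to the universal cover of the quotient; the rest stands.
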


\begin{question}
By the work of Deligne-Mostow \cite{DM86}, we know that if $n,d,k$ is equal to either 
$(8,4,1)$ or $(12,6,1)$. Then, the map $\Phi_{n,d,k}$ is finite and dominant. 
What are all the other values of $(n,d,k)$ in Table \ref{tab:ancestorCases} such that 
\[
\dim
\left(
\Phi_{n,d,k}
\left(
M_{0,n}
\right)
\right)
= (n-3)?
\]
\end{question}

\subsection{Eigenspectra of isolated hypersurface singularities}
Let $\mathbf{z} := (z_0, \cdots, z_s)$, 
we recall that a polynomial $f(\mathbf{z})$ is called quasi-homogenuous if there exists a weight $\mathbf{w}:=(w_0,...,w_s)\in \mathbb{Q}^{s+1}$ such that 
\begin{align*}
f(\mathbf{z})=\sum a_{\mathbf{m}}\mathbf{z}^{\mathbf{m}}, 
&&
\mathbf{m}\in \mathbb{Z}_{\geq 0}^{s+1}    
\end{align*}
with $\mathbf{w}\cdot \mathbf{m}$ being a constant for all $\mathbf{m}\neq 0$. 
Therefore, for any $t \in \Delta$, the graph of $(f(\mathbf{z})-t=0)$ defines a hypersurface $\mathcal{X}$ in $\mathbb{C}^{s+2}$
which is a family of algebraic varieties 
$\mathcal{X} \to \Delta$
via the map
$f: \mathbb{C}^{s+1}\rightarrow \mathbb{C}$,
We further assume $X_t:=f^{-1}(t)$ is smooth for any $t\in \Delta^*$, and $X_0$ has an isolated singularity given by $f(\mathbf{z})=0$.

\begin{definition}
The Milnor fiber $\mathcal{F}_f$ associated to the family $\mathcal{X}\rightarrow \Delta$ is defined to be $f_1^{-1}(t_0)\cap B_{\epsilon}(0)$ for 
$0<\epsilon \ll t_0 \ll 1$. Its reduced cohomology $V_f:=\tilde{H}^k(\mathcal{F}_f, \mathbb{Q})$ is called the vanishing cohomology of the degeneration.
\end{definition}

Denote $T$ as the generator of monodromy group around $0\in \Delta$ acting on cohomology of the fibers. By Borel's monodromy theorem, $T$ is quasi-unipotent and admits a decomposition $T=T^{\text{ss}}e^N$ where $T^{\text{ss}}$ is the finite-order semisimple factor and $N$ is the monodromy logarithm. To describe our degeneration, Hodge-theoretically, we need some definitions and notations. 
The following theorems are well-known and
we refer the reader to \cite{KL20} and \cite{PS08} for details.

\begin{theorem}
\label{Thm:CycleFunctor}
Let $\psi_f$ be the nearby cycle functor, and 
$\phi_f$ be the vanishing cycle functor. If we denote
\begin{align*}
 H^k_{\mathrm{lim}}(X_t):=\mathbb{H}^k(X_0, \psi_f\mathbb{Q}_{\mathcal{X}})
 &&
 H^k_{\mathrm{van}}(X_t):=\mathbb{H}^k(X_0, \phi_f\mathbb{Q}_{\mathcal{X}})
\end{align*}
, then it holds that 
\begin{itemize}
\item[1.] The nearby and vanishing cycle functors associate both $H^k_{\mathrm{lim}}(X_t)$ and $H^k_{\mathrm{van}}(X_t)$ natural mixed Hodge structures compatible with $T$, in the sense $T^{\mathrm{ss}}$ acts finitely and $N$ acts as an $(-1,-1)$-morphism.
\item[2.] $H^k_{\mathrm{van}}(X_t)\cong V_f$ and  $(V_f)^{p,q} = 0$ for  $ p+q  <k$ and $k+1 < p+q$. 
\end{itemize}
\end{theorem}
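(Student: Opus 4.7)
The plan is to invoke Saito's theory of mixed Hodge modules as the cleanest source of the required structures, with the geometric Steenbrink picture as a backup for intuition. I would first establish (1) in full generality and then specialize to the isolated singularity case to obtain (2).

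First, I would recall that on the algebraic side, for any regular function $f$ on a smooth variety, the nearby and vanishing cycle functors $\psi_f, \phi_f$ lift to exact functors between categories of mixed Hodge modules (Saito). Evaluating on $\mathbb{Q}^H_{\mathcal{X}}$ and taking hypercohomology on $X_0$ endows both $H^k_{\mathrm{lim}}(X_t)$ and $H^k_{\mathrm{van}}(X_t)$ with natural mixed Hodge structures. The action of the monodromy $T$ comes from its canonical action on $\psi_f, \phi_f$; the Jordan decomposition $T = T^{\mathrm{ss}} e^N$ is supplied by Borel's monodromy theorem, and by construction of the relative monodromy filtration in MHM, $N$ is a morphism of type $(-1,-1)$ and $T^{\mathrm{ss}}$ has finite order (it acts through the eigenvalue decomposition indexed by roots of unity). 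If one prefers a more hands-on route, Steenbrink's construction via semistable reduction would produce the same MHS: after base change $\Delta \to \Delta,\ t \mapsto t^m$ making the total space smooth with reduced normal crossing central fiber, the weight filtration is built from the combinatorics of the strata, and the monodromy logarithm is seen to shift $W_{\bullet}$ by $-2$ and the Hodge filtration by $-1$, giving the $(-1,-1)$-type.

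For part (2), I would use the distinguished triangle
\[
i^*\mathbb{Q}_{\mathcal{X}} \longrightarrow \psi_f\mathbb{Q}_{\mathcal{X}} \xrightarrow{\;\mathrm{can}\;} \phi_f\mathbb{Q}_{\mathcal{X}} \xrightarrow{+1}
\]
in the derived category of sheaves on $X_0$, which both explains the terminology and realizes $\phi_f\mathbb{Q}_\mathcal{X}$ as measuring the failure of specialization. Since $X_0$ has only the origin as a singularity, $\phi_f\mathbb{Q}_\mathcal{X}$ is supported at that point, so the hypercohomology collapses to the stalk, and that stalk identifies with $\tilde H^\bullet(\mathcal{F}_f,\mathbb{Q}) = V_f$ by the stalk computation of vanishing cycles together with the local definition of the Milnor fiber. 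By Milnor's classical theorem, $\mathcal{F}_f$ has the homotopy type of a bouquet of $\mu$ copies of $S^k$, so $V_f$ is concentrated in degree $k$, proving the claimed identification.

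For the Hodge type bounds $(V_f)^{p,q} = 0$ outside $p+q \in \{k, k+1\}$, I would argue that the weight filtration on $V_f$ has at most two nonzero graded pieces, $\mathrm{Gr}^W_k$ and $\mathrm{Gr}^W_{k+1}$. One way: from the distinguished triangle above, passing to the long exact sequence of MHS on hypercohomology produces
\[
\cdots \to H^k(X_0) \to H^k_{\mathrm{lim}}(X_t) \to V_f \to H^{k+1}(X_0) \to \cdots,
\]
and since $H^k_{\mathrm{lim}}$ has weights in $[0,2k]$ while for an isolated hypersurface singularity the ``infinite'' weights ($>k+1$) and ``low'' weights ($<k$) all come from the smooth part $X_0$ (which carries a pure structure in the appropriate range), only the weights $k$ and $k+1$ survive in $V_f$. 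Equivalently, one invokes Steenbrink--Varchenko's spectrum theory for isolated hypersurface singularities, which places the spectral numbers of $V_f$ in the strip $(0,k+1)$ and translates into exactly the stated Hodge bound.

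The main obstacle I anticipate is the weight bound in (2): the existence of MHS and the identification with $V_f$ are formal once one cites Saito (or Steenbrink), but the two-step weight restriction uses the isolated singularity hypothesis in an essential way, and a careful proof requires either Steenbrink's explicit description of $\mathrm{Gr}^W_\bullet\phi_f$ on a reduced normal crossing resolution, or the spectrum-theoretic bounds of Varchenko. I would therefore isolate this as the one place where a genuine computation (rather than categorical bookkeeping) is unavoidable.
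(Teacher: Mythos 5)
The paper offers no proof of this statement: it is introduced with ``the following theorems are well-known and we refer the reader to \cite{KL20} and \cite{PS08} for details,'' so there is nothing to match your argument against line by line. Your treatment of part (1) via Saito's mixed Hodge modules (or Steenbrink's semistable-reduction construction) and your identification $H^k_{\mathrm{van}}(X_t)\cong V_f$ from the support of $\phi_f\mathbb{Q}_{\mathcal{X}}$ at the isolated singular point, plus Milnor's bouquet theorem, are exactly what those references supply, and are fine.

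There is, however, a genuine gap in your argument for the weight bound $(V_f)^{p,q}=0$ unless $p+q\in\{k,k+1\}$: you never use the hypothesis, in force throughout this subsection of the paper, that $f$ is \emph{quasi-homogeneous}, and without it the statement is false. For a general isolated hypersurface singularity the monodromy logarithm $N$ can be nonzero on $V_f$ (e.g.\ cusp singularities $x^a+y^b+z^c+xyz$), and then the weight filtration, being the (relative) monodromy weight filtration centered at $k$ on the $T^{\mathrm{ss}}\neq 1$ part and at $k+1$ on the unipotent part, spreads over more than two steps. Neither of your two suggested justifications closes this: the bound ``weights of $H^k_{\mathrm{lim}}$ lie in $[0,2k]$'' is far too weak, and the containment of the spectrum in $(0,k+1)$ constrains the Hodge filtration $F^{\bullet}$ (the integer parts $p$ of the spectral numbers), not the weight graded pieces $p+q$. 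The correct route, and the one the paper itself leans on in the very next subsection, is Steenbrink's theorem for quasi-homogeneous isolated singularities \cite[Thm.~1]{Ste77}: quasi-homogeneity forces the monodromy on $V_f$ to be finite, hence $N=0$ there, and the weight filtration then collapses to the two graded pieces $\mathrm{Gr}^W_k$ and $\mathrm{Gr}^W_{k+1}$. You correctly flagged the weight bound as the one nonformal step, but the missing ingredient is specifically the quasi-homogeneity (equivalently, semisimplicity of the local monodromy), not a finer normal-crossing or spectrum computation.
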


\begin{theorem}\label{vcs}
 There is a sequence known as the Vanishing cycle sequence and given by
\begin{equation}
    ...\rightarrow H^k(X_0)\xrightarrow{}{}H^k_{\mathrm{lim}}(X_t)\xrightarrow{}\bigoplus V_f\xrightarrow{\delta}H^{k+1}(X_0)\rightarrow H^{k+1}_{\mathrm{lim}}(X_t)\rightarrow ...
\end{equation}
which is an exact sequence of mixed Hodge structures. Moreover, the monodromy $T=T^{\text{ss}}e^N$ acts on each term and is compatible with all maps in the sequence. The kernel
\begin{equation}
H^k_{\mathrm{ph}}(X_0):=\mathrm{Ker}\{H^k(X_0)\xrightarrow{\mathrm{}}{}H^k_{\mathrm{lim}}(X_t)\}
\end{equation}
is known as the Phantom cohomology, see \cite{KL20}.
\end{theorem}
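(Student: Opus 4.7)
The plan is to obtain the vanishing cycle sequence from the fundamental distinguished triangle in the derived category of constructible sheaves on $X_0$ that defines $\phi_f$ as the cone of the specialization map:
\[
\mathbb{Q}_{X_0} \xrightarrow{\ \mathrm{sp}\ } \psi_f \mathbb{Q}_{\mathcal{X}} \xrightarrow{\ \mathrm{can}\ } \phi_f \mathbb{Q}_{\mathcal{X}} \xrightarrow{\ +1\ }.
\]
First I would apply the hypercohomology functor $\mathbb{H}^{\bullet}(X_0, -)$ to this triangle. Since $\mathbb{H}^{k}(X_0, \mathbb{Q}_{X_0}) = H^k(X_0)$, $\mathbb{H}^k(X_0, \psi_f\mathbb{Q}_{\mathcal{X}}) = H^k_{\mathrm{lim}}(X_t)$ by definition, and $\phi_f\mathbb{Q}_{\mathcal{X}}$ is supported on the critical locus of $f$, where its stalk computes the reduced cohomology $V_f$ of the Milnor fiber (Theorem \ref{Thm:CycleFunctor}), the resulting long exact sequence is exactly the one displayed in the statement, with the summands of $\bigoplus V_f$ indexed by the critical points of $f$.

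Next, I would upgrade this long exact sequence to one of mixed Hodge structures. Following Saito's theory of mixed Hodge modules, or equivalently the more hands-on mixed Hodge complex constructions of Navarro-Aznar and Steenbrink recalled in \cite{PS08} and \cite{KL20}, both $\psi_f\mathbb{Q}_{\mathcal{X}}$ and $\phi_f\mathbb{Q}_{\mathcal{X}}$ carry canonical mixed Hodge module structures on $X_0$; the maps $\mathrm{sp}$ and $\mathrm{can}$ lift to morphisms in that abelian category, and the distinguished triangle above lifts accordingly. Applying $\mathbb{H}^{\bullet}(X_0,-)$, which is exact on mixed Hodge modules, then promotes every arrow in the long exact sequence to a morphism of mixed Hodge structures. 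For the monodromy statement, I would invoke that the semisimple and unipotent factors $T^{\mathrm{ss}}$ and $N$ are defined already at the level of $\psi_f\mathbb{Q}_{\mathcal{X}}$ and $\phi_f\mathbb{Q}_{\mathcal{X}}$, commute with $\mathrm{can}$ by construction, and extend by the trivial action on $\mathbb{Q}_{X_0}$ so that $\mathrm{sp}$ is equivariant; hypercohomology then transports the equivariance to every term of the sequence, and automatically $N$ acts as a $(-1,-1)$-morphism since this property is preserved by derived pushforward.

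The phantom cohomology $H^k_{\mathrm{ph}}(X_0)$ is then read off directly from the sequence as the kernel of the first arrow, and by the previous paragraph it inherits the structure of a sub-mixed Hodge structure of $H^k(X_0)$. The only genuinely technical step is the lift of the distinguished triangle to a triangle of mixed Hodge complexes or mixed Hodge modules; this is the main obstacle, but it is precisely the content of the references invoked at the top of this subsection, so I would black-box it rather than reprove it, and present the rest of the argument as a formal consequence of taking hypercohomology of the defining triangle of $\phi_f$.
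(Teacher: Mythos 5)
The paper offers no proof of this statement: it is presented as a well-known result with a pointer to \cite{KL20} and \cite{PS08}. Your argument --- taking hypercohomology of the defining distinguished triangle $\mathbb{Q}_{X_0}\to\psi_f\mathbb{Q}_{\mathcal{X}}\to\phi_f\mathbb{Q}_{\mathcal{X}}\xrightarrow{+1}$, identifying the third term with $\bigoplus V_f$ via the support of $\phi_f\mathbb{Q}_{\mathcal{X}}$ on the critical locus, and lifting everything to mixed Hodge modules (or Steenbrink/Navarro-Aznar mixed Hodge complexes) to get a sequence of mixed Hodge structures compatible with $T=T^{\mathrm{ss}}e^{N}$ --- is precisely the standard derivation contained in those references, and it is correct; the phantom cohomology is then indeed just the kernel of the first map, inheriting a sub-MHS structure. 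The only wording to tighten is that hypercohomology is not literally ``exact'' on mixed Hodge modules; rather, a distinguished triangle of mixed Hodge modules yields a long exact sequence of mixed Hodge structures on hypercohomology, which is all you need.
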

\begin{remark}
The phantom cohomology $H^k_{\mathrm{ph}}$ describes the global linear dependence relations of local vanishing cycles.
\end{remark}

\begin{theorem}\label{css}
There is a sequence known as the Clemens-Schmid sequence and given by 
\begin{equation}
    ...\rightarrow H_{2s+2-k}(X_0)\rightarrow H^k(X_0)\xrightarrow{\mathrm{}}{}H^k_{\mathrm{lim}}(X_t)\xrightarrow{N}H^k_{\mathrm{lim}}(X_t) \xrightarrow{}H_{2s-k}(X_0)\rightarrow ...
\end{equation}
which is an exact sequence of mixed Hodge structures, and the monodromy $T=T^{\text{ss}}e^N$ acts on each term and is compatible with all maps in the sequence.
\end{theorem}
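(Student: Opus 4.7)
The plan is to recover this classical sequence by splicing two long exact sequences attached to the degeneration: the relative pair sequence of $(\mathcal{X},\mathcal{X}^{*})$ and the Wang sequence of the smooth fibration $\mathcal{X}^{*}:=\mathcal{X}\setminus X_{0}\to \Delta^{*}$.

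First I would reduce to the semistable case: after a base change $t\mapsto t^{m}$ and a resolution of singularities I may assume $\mathcal{X}$ is smooth and $X_{0}$ is a reduced simple normal crossing divisor. Then the inclusion $X_{0}\hookrightarrow \mathcal{X}$ admits a deformation retract, so $H^{k}(\mathcal{X})\cong H^{k}(X_{0})$; this realizes the second arrow $H^{k}(X_{0})\to H^{k}_{\mathrm{lim}}(X_{t})$ in the statement as the specialization map $H^{k}(\mathcal{X})\to H^{k}(X_{t})$ composed with parallel transport of $X_{t}$ to the reference fiber. For the outer homological terms I would invoke the long exact sequence of the pair $(\mathcal{X},\mathcal{X}^{*})$: Poincar\'e--Lefschetz duality on the oriented $(2s{+}2)$-real manifold $\mathcal{X}$, combined with excision across a tubular neighborhood of $X_{0}$, yields
\begin{align*}
H^{k}(\mathcal{X},\mathcal{X}^{*}) \;\cong\; H_{2s+2-k}(\mathcal{X}) \;\cong\; H_{2s+2-k}(X_{0}),
\end{align*}
giving both outer terms $H_{2s+2-k}(X_{0})$ and $H_{2s-k}(X_{0})$ with the correct index shift.

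The middle portion, where the monodromy logarithm $N$ appears together with the exactness $\ker N = \mathrm{Im}(H^{k}(X_{0})\to H^{k}_{\mathrm{lim}})$, comes from the Wang sequence of $\mathcal{X}^{*}\to \Delta^{*}$,
\begin{align*}
\cdots \to H^{k}(\mathcal{X}^{*}) \to H^{k}_{\mathrm{lim}}(X_{t}) \xrightarrow{\,N\,} H^{k}_{\mathrm{lim}}(X_{t}) \to H^{k+1}(\mathcal{X}^{*}) \to \cdots,
\end{align*}
together with the \emph{local invariant cycle theorem} of Clemens--Deligne, which asserts that the image of $H^{k}(\mathcal{X})\to H^{k}(\mathcal{X}^{*})\to H^{k}_{\mathrm{lim}}(X_{t})$ is exactly $\ker N$. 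A standard diagram chase comparing the pair sequence above with the Wang sequence, through the commutative square $(\mathcal{X},\mathcal{X}^{*})\leftrightarrow (\mathcal{X},X_{t})$, then splices these into a single long exact sequence of the stated form.

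To upgrade this to an exact sequence of mixed Hodge structures compatible with $T=T^{\mathrm{ss}}e^{N}$, I would realize every arrow as a morphism of constructible complexes on $X_{0}$ carrying Saito/Steenbrink mixed Hodge module structures, using the nearby and vanishing cycle functors $\psi_{f}$, $\phi_{f}$ of Theorem \ref{Thm:CycleFunctor} and the MHS on $H_{\bullet}(X_{0})$ coming from the simplicial resolution of $X_{0}$; MHS compatibility and the compatibility with $T$ are then automatic. The main obstacle I expect is the bookkeeping of weight filtrations and Tate twists introduced by Poincar\'e--Lefschetz duality, and verifying that the connecting homomorphism produced by the splicing is genuinely the monodromy logarithm $N$ rather than only an automorphism matching $N$ up to scalar on each weight-graded piece; this is where the explicit weight-filtered Steenbrink complex, as exposited in \cite{PS08}, becomes essential.
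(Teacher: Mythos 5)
The paper offers no proof of Theorem \ref{css}: it is quoted as a classical result with pointers to \cite{KL20} and \cite{PS08}, so there is no in-paper argument to compare against. Your sketch reconstructs the standard proof found in those references and in Clemens' original work: semistable reduction, the retraction of $\mathcal{X}$ onto $X_0$, Lefschetz duality plus excision to identify $H^{k}(\mathcal{X},\mathcal{X}^{*})$ with $H_{2s+2-k}(X_0)$ (and $H^{k+2}(\mathcal{X},\mathcal{X}^{*})$ with $H_{2s-k}(X_0)$), the Wang sequence, and the local invariant cycle theorem to splice the two sequences, with the mixed Hodge structure compatibility supplied by the Steenbrink complex. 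This is correct in outline, and you correctly identify the weight/Tate-twist bookkeeping as the delicate part of the MHS upgrade. Three points should be tightened. First, the Wang sequence naturally carries the map $T-\mathrm{Id}$, not $N$; after unipotent reduction one has $N=\log T=(T-\mathrm{Id})\,u$ with $u$ unipotent and commuting with everything, so kernels and images coincide, but this substitution must be stated explicitly since the theorem asserts exactness for $N$ itself. Second, the splicing is not ``a standard diagram chase'': exactness at the first copy of $H^{k}_{\mathrm{lim}}(X_t)$ is \emph{equivalent} to the local invariant cycle theorem, which is the genuinely deep input (you do invoke it, but it carries the entire weight of the argument rather than being one ingredient among several, and a version of it is also needed to get exactness at the $H^{k}(X_0)$ term, since the specialization map factors through $H^{k}(\mathcal{X}^{*})$). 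Third, the surrounding subsection of the paper states this in the setting of the affine Milnor fibration of an isolated hypersurface singularity, where the fibers are open; Clemens--Schmid in the form you prove requires $\mathcal{X}\to\Delta$ proper (or a Borel--Moore/compactly supported variant), so your Lefschetz-duality step is only literally valid in the projective case --- which is, fortunately, the only case the paper actually uses, namely the degenerating families of projective cyclic covers in Section \ref{sec:ExtensionPureLocus}.
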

To investigate the mixed Hodge structure on the vanishing cohomology $V_f$, we write its Deligne splitting as $V_f=\bigoplus_{p,q}V^{p,q}$.
Suppose there is a finite-order automorphism $\rho$ on $V_f$ compatible with $T^{\text{ss}}$ and $e^N$, then we have a finer splitting of $V_f$ into their eigenspaces:
\begin{equation}
    V_f=\bigoplus_{p,q, \lambda, \eta}V_{f,\lambda, \eta}^{p,q},
\end{equation}
where $e^{2\pi i\lambda}$ and $e^{2\pi i\eta}$
are eigenvalues correspond to 
$T^{\text{ss}}$ and $\rho$, respectively. Notice that we can always choose $\lambda,\eta\in [0,1)\cap \mathbb{Q}$ because the eigenvalues of $T^{\text{ss}}$ and $\rho$ are roots of unity.

\begin{definition}\label{defn:Eigenspectra}
The eigenspectra associated to the vanishing cohomology $V_f$ and the finite automorphism $\rho$ is equal to 
the element in the group algebra $\mathbb{Z}[\mathbb{Q}\times \mathbb{Q}\times \mathbb{Z}]$ defined as
\begin{align*}
    {\sigma_E(f, \rho)}:= \sum_{p,q,\lambda, \eta}m_{p+\lambda, \eta, p+q}[(p+\lambda,\eta,  p+q)],
    &&
    m_{p+\lambda,\eta, p+q}:=\textrm{dim}(V_{f,\lambda, \eta}^{p,q}).
\end{align*}
\end{definition}
This eigenspectra is a natural generalization of Steenbrink's mixed-spectrum, which only contains information about the monodromy operator. 

\section{
Describing the locus with finite monodromy}
\label{sec:PureLocus}

In this section, we prove Proposition \ref{prop:extension} and Theorem \ref{thm:codimensions}. 
We start with some require results on the eigenspectra of the degenerations. 
\subsection{Lemmas on eigenspectra}
Let $\mathbf{z} \in \mathbb{C}^{s+1}$ and $f(\mathbf{z})$ be a quasi-homogenuous polynomial of weight $\mathbf{w} \in \mathbb{Q}^{s+1}_{>0}$ and degree $1$. By \cite[Thm. 1]{Ste77}, the only weights appear in the mixed Hodge structure of vanishing cohomology are $p+q=s$ or $s+1$. Hence the eigenspectra given in Definition \ref{defn:Eigenspectra} can be also written as:
\begin{equation*}
    \sigma_E(f,\rho):= \sum_{p+q=s,\lambda, \eta}m_{p+\lambda, \eta, s}[(p+\lambda,\eta,  s)]+\sum_{p+q=s+1,\lambda, \eta}m_{p+\lambda,\eta, s+1}[(p+\lambda, \eta, s+1)].
\end{equation*}
Since $f(\mathbf{z})$ is quasi-homogeneous, the Jacobian algebra satisfies
\begin{equation*}
\frac{\mathbb{C}[[\mathbf{z}]]}{\mathcal{J}(f)}
:=
\frac{
\mathbb{C}[[\mathbf{z}]]}{<f, \frac{\partial f}{\partial z_0},...,\frac{\partial f}{\partial z_s}>}
\cong 
\frac{\mathbb{C}[\mathbf{z}]}{\mathcal{J}(f)}
\end{equation*}
This is a finite dimensional $\mathbb{C}$-vector space of which basis is a set of monomials that we denote $\operatorname{B}(\mathbf{f})$.
Let  
$
\mathcal{B} = \{ 
\beta \in \mathbb{N}^{s+1}_{\geq 0} \; | \; \mathbf{z}^{\beta} \in \operatorname{B}(\mathbf{f}) \}
$
and set
\begin{equation} \label{eq:lb}
l(\beta):= \sum_{0\leq j\leq s}(\beta_j+1)w_j 
\in \mathbb{Q}.
\end{equation}
We need the holomorphic $(s+1)$-forms
\begin{equation} \omega_{\beta}:=\frac{\mathbf{z}^{\beta}d\mathbf{z}}{(f(\mathbf{z})-1)^{\lceil l(\beta) \rceil}}, \;\; \beta\in \mathcal{B}
\end{equation}
defined on $\mathbb{C}^{s+1}\backslash (Z_f:=\{f(\mathbf{z}) -1 =0 \})$ for our following isomorphism.
\begin{lemma}\label{lemma:residue}
There is an isomorphism:
\begin{equation*}
    H^{s+1}(\mathbb{C}^{s+1}\backslash Z_f)\xrightarrow[\cong]{\mathrm{Res}}H^s(Z_f)\cong V_f
\end{equation*}
and each $\eta_{\beta}:=\mathrm{Res}_{Z_f}(\omega_{\beta})$ satisfies:
\begin{equation*}
\eta_{\beta}\in \bigoplus_{\mu}V^{
\lfloor s+1-l(\vec{\beta})\rfloor,
\lfloor l(\vec{\beta})\rfloor}_{f, \{
s+1-l(\vec{\beta})\}, \eta}, \beta\in \mathcal{B}
\end{equation*}
\end{lemma}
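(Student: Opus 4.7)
The plan is to combine Griffiths' residue calculus on the complement of an affine hypersurface with the Steenbrink--Varchenko description of the mixed Hodge structure on the vanishing cohomology of a quasi-homogeneous isolated singularity; the framework in use is the one recorded in \cite{KL20}.

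First I would establish the isomorphism $\mathrm{Res}\colon H^{s+1}(\mathbb{C}^{s+1}\setminus Z_f) \xrightarrow{\cong} H^s(Z_f) \cong V_f$. The first map is the standard Poincar\'e residue, and it is an isomorphism because the Thom--Gysin long exact sequence for the closed embedding $Z_f \hookrightarrow \mathbb{C}^{s+1}$, combined with the contractibility of $\mathbb{C}^{s+1}$, forces vanishing of the surrounding terms in degree $s+1$. For the identification $H^s(Z_f) \cong V_f$, the quasi-homogeneity provides a $\mathbb{C}^*$-action $t\cdot\mathbf{z} = (t^{w_0}z_0,\dots,t^{w_s}z_s)$ that yields a diffeomorphism between $Z_f = f^{-1}(1)$ and the Milnor fiber $\mathcal{F}_f$, hence $\tilde{H}^s(Z_f) \cong \tilde{H}^s(\mathcal{F}_f) = V_f$.

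Next, to pin down the Hodge bidegree of each $\eta_\beta$, I would compute $H^{s+1}(\mathbb{C}^{s+1}\setminus Z_f)$ via the algebraic de Rham complex $\Omega^{\bullet}_{\mathbb{C}^{s+1}}(\ast Z_f)$ of meromorphic forms with poles along $Z_f$, filtered by pole order. By Griffiths' theorem, extended to the quasi-homogeneous isolated-singularity setting by Steenbrink (see \cite{KL20}), the residue carries the pole-order filtration (shifted by one) isomorphically onto the Hodge filtration on $H^s(Z_f)$. Since $\omega_\beta$ has pole order exactly $\lceil l(\beta)\rceil$, its residue lies in $F^{s+1-\lceil l(\beta)\rceil}H^s(Z_f)$. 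Because the Jacobian ring $\mathbb{C}[\mathbf{z}]/\mathcal{J}(f)$ has a monomial basis indexed by $\beta\in\mathcal{B}$ whose cardinality equals $\dim V_f$, the classes $\{\eta_\beta\}$ form a basis diagonal for the Hodge bigrading, giving $(p,q) = (\lfloor s+1-l(\beta)\rfloor,\lfloor l(\beta)\rfloor)$.

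Finally, for the eigenvalue data: the semisimple monodromy $T^{\mathrm{ss}}$ on $V_f$ corresponds, under the $\mathbb{C}^*$-identification, to the pullback by $t\mapsto e^{2\pi i}t$, which multiplies $\omega_\beta$ by $e^{2\pi i\,l(\beta)}$; hence the normalized eigenvalue exponent is $\lambda = \{-l(\beta)\} = \{s+1-l(\beta)\}$. The finite automorphism $\rho$ commutes with $T^{\mathrm{ss}}$ and with the residue, so its eigenvalue on $\eta_\beta$ is read off from its action on the monomial $\mathbf{z}^\beta$. The main obstacle is making the pole-order versus Hodge filtration comparison sharp in the borderline case $l(\beta)\in\mathbb{Z}$, where a priori the pole-order filtration can overshoot and weight-$(s+1)$ contributions may appear; to handle this I would invoke the explicit Koszul/Jacobian-ring computation of \cite[Sec.~2]{KL20}, which confirms that $\eta_\beta$ lies in a single bigraded piece and pins down the eigenvalue $\eta$.
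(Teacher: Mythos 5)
Your proposal is correct and follows essentially the same route as the paper: the paper's proof is a one-line citation of the Griffiths--Steenbrink residue calculus in \cite[Chap.~2]{KL20}, and your argument (Gysin sequence for the residue isomorphism, the $\mathbb{C}^*$-action identifying $f^{-1}(1)$ with the Milnor fiber, pole-order versus Hodge filtration, and the quasi-homogeneous computation of the $T^{\mathrm{ss}}$- and $\rho$-eigenvalues) is precisely the content of that citation, worked out in detail. The floor/ceiling bookkeeping and the normalized exponent $\{s+1-l(\beta)\}$ both check out against the statement, so no gap.
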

\begin{proof}
The isomorphism is given by the residue map and by \cite[Chap. 2]{KL20}
with the later generated by  
$\mathrm{Res}_{Z_f}([\omega_{\beta}])$.
\end{proof}

\begin{corollary}\label{cor:GrWlb}
The dimension of $\mathrm{Gr}_{s+1}^{W}(V_f)$ equals to the number of $\beta\in \mathcal{B}$ such that $l(\beta)\in \mathbb{Z}$.
\end{corollary}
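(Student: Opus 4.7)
The plan is to read the statement directly off Lemma \ref{lemma:residue}. Since $\{\eta_\beta\mid \beta\in\mathcal{B}\}$ is a basis of $V_f$ via the residue isomorphism, and each basis vector lies in a single Hodge piece $V^{\lfloor s+1-l(\beta)\rfloor,\lfloor l(\beta)\rfloor}$, the dimension of $\mathrm{Gr}^W_{s+1}(V_f)$ is precisely the cardinality of
\begin{equation*}
\{\beta\in\mathcal{B}\;|\;\lfloor s+1-l(\beta)\rfloor+\lfloor l(\beta)\rfloor=s+1\}.
\end{equation*}
So the whole content of the corollary is an elementary statement about the floor function.

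I would then verify this elementary fact by splitting on whether $l(\beta)$ is an integer. Write $l(\beta)=m+\varepsilon$ with $m\in\mathbb{Z}_{\geq 0}$ and $\varepsilon\in[0,1)$. If $\varepsilon=0$ then $\lfloor l(\beta)\rfloor=m$ and $\lfloor s+1-l(\beta)\rfloor=s+1-m$, so the two floors sum to $s+1$ and $\eta_\beta$ contributes to $\mathrm{Gr}^W_{s+1}(V_f)$. If $\varepsilon\in(0,1)$ then $\lfloor l(\beta)\rfloor=m$ while $s+1-l(\beta)=(s-m)+(1-\varepsilon)$ with $1-\varepsilon\in(0,1)$, so $\lfloor s+1-l(\beta)\rfloor=s-m$ and the two floors sum to $s$, placing $\eta_\beta$ in $\mathrm{Gr}^W_s(V_f)$. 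Combining the two cases with the Steenbrink weight constraint $p+q\in\{s,s+1\}$ recalled at the start of the subsection, we see that weight $s+1$ arises if and only if $l(\beta)\in\mathbb{Z}$.

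There is essentially no obstacle; the only thing to be careful about is making sure the Hodge bigrading prescribed by Lemma \ref{lemma:residue} is consistent with Steenbrink's dichotomy $p+q\in\{s,s+1\}$, which is automatic because $\lfloor s+1-l(\beta)\rfloor+\lfloor l(\beta)\rfloor$ is always either $s$ or $s+1$. Putting the basis count together with the case analysis above yields the claimed equality.
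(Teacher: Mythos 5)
Your proposal is correct and follows essentially the same route as the paper: both deduce from Lemma \ref{lemma:residue} that each basis element $\eta_\beta$ lies in $\mathrm{Gr}^W_{\lfloor s+1-l(\beta)\rfloor+\lfloor l(\beta)\rfloor}(V_f)$ and then observe that this sum equals $s+1$ exactly when $l(\beta)\in\mathbb{Z}$. Your explicit case analysis on the fractional part of $l(\beta)$ just spells out the step the paper states without computation.
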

\begin{proof}
By definition of $V_f$ and Lemma \ref{lemma:residue}
\begin{align*}
\dim \left( \mathrm{Gr}_{s+1}^{W}(V_f)  \right)
= \sum_{p,q, p+q = s+1} \dim(V^{p,q}_f) .  
\end{align*}
For any $\beta\in \mathcal{B}$, $\eta_{\beta}\in \mathrm{Gr}_{\lfloor s+1-l(\vec{\beta})\rfloor + \lfloor  l(\vec{\beta})\rfloor}^{W}(V_f)$, so $\dim \left( \mathrm{Gr}_{s+1}^{W}(V_f)  \right)$ is exactly the number of $\beta\in \mathcal{B}$ such that
\[
s+1 = \lfloor s+1-l(\vec{\beta})\rfloor + \lfloor  l(\vec{\beta})\rfloor. 
\]
We know that either $\lfloor s+1-l(\vec{\beta})\rfloor + \lfloor  l(\vec{\beta})\rfloor = s$ or $s+1$, and the latter happens if and only if $l(\beta) \in \mathbb{Z}$. 
\end{proof}

\begin{lemma}
The eigenspectra associated to the vanishing cohomology of the local degeneration  to the singularity  $f(x,y)=y^d+x^l$ is:
\begin{align*}
    \sigma_E(f,\rho) = \sum_{\frac{a+1}{l}+\frac{k}{d}\in \mathbb{Z}}[(\frac{a+1}{l}+\frac{k}{d}, 2-\frac{a+1}{l}-\frac{k}{d}, 2)]
    + \sum_{\frac{a+1}{l}+\frac{k}{d}\notin \mathbb{Z}}[(\frac{a+1}{l}+\frac{k}{d}, 2-\frac{a+1}{l}-\frac{k}{d}, 1)]    
\end{align*}
where $0\leq a\leq l-2$ and $0\leq k \leq d-1$.
\end{lemma}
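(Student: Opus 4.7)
The plan is to apply Lemma~\ref{lemma:residue} and Corollary~\ref{cor:GrWlb} directly to the quasi-homogeneous polynomial $f(x,y) = y^d + x^l$, whose weight vector is $\mathbf{w} = (1/l, 1/d)$ and whose total degree is normalized to $1$. First I would compute the Jacobian algebra: since $\mathcal{J}(f) = (l x^{l-1}, d y^{d-1})$, a monomial basis of $\mathbb{C}[x,y]/\mathcal{J}(f)$ is $\{ x^a y^{k-1} : 0 \leq a \leq l-2,\ 1 \leq k \leq d-1 \}$. Writing $\beta = (a, k-1) \in \mathcal{B}$, formula \eqref{eq:lb} immediately yields
\[
l(\beta) = \frac{a+1}{l} + \frac{k}{d}.
\]

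Next, Lemma~\ref{lemma:residue} supplies a basis $\eta_\beta = \mathrm{Res}_{Z_f}(\omega_\beta)$ of $V_f$, where $\omega_\beta = x^a y^{k-1}\, dx \wedge dy / (f-1)^{\lceil l(\beta) \rceil}$. With $s = 1$, each $\eta_\beta$ sits in Hodge bidegree $(\lfloor 2 - l(\beta) \rfloor,\ \lfloor l(\beta) \rfloor)$ with $T^{\mathrm{ss}}$-eigenvalue parameter $\lambda = \{2 - l(\beta)\}$. Hence the spectrum number $p + \lambda$ equals $2 - l(\beta)$ while its Hodge-symmetric partner $q + \mu$ equals $l(\beta)$; together these account for the two rational entries $\frac{a+1}{l} + \frac{k}{d}$ and $2 - \frac{a+1}{l} - \frac{k}{d}$ appearing in the formula.

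To fold in the $\rho$-action I would compute it on $\omega_\beta$ directly: the automorphism $\rho : y \mapsto e^{2 \pi i / d} y$ fixes $f$, and acts on the factor $y^{k-1}\, dy$ by the character $\lambda^k = e^{2 \pi i k/d}$, so $\rho^{*} \omega_\beta = e^{2 \pi i k / d}\, \omega_\beta$. Since the residue map is equivariant with respect to finite automorphisms preserving $f$, the class $\eta_\beta$ lies in the character-$\lambda^k$ eigenspace of $\rho$, pinning down the $\rho$-eigenvalue parameter of every basis element.

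Finally, Corollary~\ref{cor:GrWlb} determines the weight: $\eta_\beta \in \mathrm{Gr}^{W}_{2} V_f$ exactly when $l(\beta) \in \mathbb{Z}$ (third coordinate equal to $2$), and $\eta_\beta \in \mathrm{Gr}^{W}_{1} V_f$ otherwise (third coordinate equal to $1$). Summing the resulting triples over all admissible $(a, k)$, separated according to whether $\frac{a+1}{l} + \frac{k}{d}$ is an integer, yields the claimed expression for $\sigma_E(f, \rho)$. No substantial obstacle arises in this proof; it is essentially bookkeeping, and the only delicate points are correctly tracking the $\rho$-action on each $\omega_\beta$ and matching the Hodge-theoretic data from Lemma~\ref{lemma:residue} with the convention for the spectrum triple in Definition~\ref{defn:Eigenspectra}.
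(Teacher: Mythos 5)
Your proposal is correct and follows essentially the same route as the paper's proof: compute the monomial basis of the Jacobian algebra of the quasi-homogeneous polynomial with weights $(1/l,1/d)$, evaluate $l(\beta)=\frac{a+1}{l}+\frac{k}{d}$, and feed this into Lemma \ref{lemma:residue} and (the proof of) Corollary \ref{cor:GrWlb} to read off the Hodge bidegrees and weights. Your explicit verification that $\rho^{*}\omega_{\beta}=e^{2\pi i k/d}\omega_{\beta}$ is a detail the paper's proof leaves implicit, and it is a worthwhile addition.
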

\begin{proof}
The monodromy logarithm and its induced LMHS on the vanishing cohomology can be computed by eigenspectra via the local degenerating equation $f(x,y)=y^d+x^l=0$ which is quasi-homogeneous with weight $\mathbf{w}=(\frac{1}{l},\frac{1}{d})$. 
Using Equation \eqref{eq:lb}, we compute $l(\beta)$ for every $\beta$ in the Jacobian algebra 
\[
\frac{\mathbb{C}[\mathbf{z}]}{\mathcal{J}(f)}
=
\{x^ay^r|\; 0\leq a\leq l-2, 0\leq r\leq d-2\}.
\]
By using Lemma \ref{lemma:residue}, we can use the value of $l(\beta)$ to calculate the dimension of $V_{f}^{p,q}$ as in the proof of Corollary \ref{cor:GrWlb}. The dimensions of these graded pieces define the eigenspectra. 
\end{proof}

\begin{corollary}\label{maincorollary}
Let $d, k, l$ be fixed positive integers such that  
$2 \leq l$, $1\leq k \leq d-1$, then we have:
\begin{align*}
h^{1,1}_k\left(  V_f \right) 
&=
\begin{cases}
1 & \text{ if } \;  \text{ $d$ divides $kl$} \\
0 & \text{ if } \;  \text{ otherwise }  
\end{cases}
\end{align*}
\end{corollary}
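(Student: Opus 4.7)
The plan is to extract the $(1,1)$ contributions directly from the eigenspectra formula established in the preceding lemma. Recall that for $f(x,y)=y^d+x^l$, the Jacobian algebra has the monomial basis $\{x^a y^b : 0\leq a\leq l-2,\ 0\leq b\leq d-2\}$, and Lemma \ref{lemma:residue} gives a corresponding basis $\{\eta_{(a,b)}\}$ of $V_f$ together with the Hodge placement controlled by $l(\beta)=\tfrac{a+1}{l}+\tfrac{b+1}{d}$.

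First, I would identify which basis elements land in the $(1,1)$ piece. Since $\tfrac{a+1}{l}\in(0,1)$ and $\tfrac{b+1}{d}\in(0,1)$, their sum lies in $(0,2)$. The bigrading formula $(\lfloor 2-l(\beta)\rfloor,\lfloor l(\beta)\rfloor)$ equals $(1,1)$ iff $l(\beta)=1$, matching the weight-$2$ term in the eigenspectra formula. Next, I would determine the $\rho$-eigenvalue of $\eta_{(a,b)}$: a direct computation $\rho^{*}(x^a y^b\,dx\,dy)=\lambda^{b+1}x^ay^b\,dx\,dy$ (with $f$ and hence the denominator $\rho$-invariant) shows that $\eta_{(a,b)}$ lies in the $(b{+}1)$-th eigenspace. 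So for a fixed $k$ with $1\leq k\leq d-1$, the $k$-th eigenspace picks out $b=k-1$, and the $(1,1)$ condition becomes
\[
\frac{a+1}{l}+\frac{k}{d}=1,\qquad \text{i.e.,}\qquad a+1=\frac{l(d-k)}{d}.
\]

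I would then count solutions. Integrality of $a+1$ is equivalent to $d\mid l(d-k)$, hence to $d\mid lk$, giving the arithmetic condition in the statement. To finish, I need to verify that when $d\mid lk$ and $1\leq k\leq d-1$, the resulting $a$ actually lies in the admissible range $\{0,\ldots,l-2\}$. Writing $g=\gcd(l,d)$, $l=gl'$, $d=gd'$ with $\gcd(l',d')=1$, the divisibility $d\mid lk$ is equivalent to $d'\mid k$, so $k=jd'$ for a unique integer $j\in\{1,\ldots,g-1\}$ (from $1\leq k\leq d-1$). A quick substitution gives $a+1=l'(g-j)\in\{l',2l',\ldots,(g-1)l'\}\subseteq\{1,\ldots,l-1\}$, so exactly one admissible $a$ exists. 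Conversely, if $d\nmid lk$ then no such $a$ exists, giving $h^{1,1}_k=0$.

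I do not expect any real obstacle: once the eigenspectra lemma is in hand, the argument is a combinatorial count. The only mildly delicate point is checking the range bound $1\leq a+1\leq l-1$ under the hypothesis $d\mid lk$ with $1\leq k\leq d-1$, which is handled cleanly by the $\gcd$-decomposition above.
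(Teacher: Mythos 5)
Your proposal is correct and follows essentially the same route as the paper: both reduce $h^{1,1}_k(V_f)$ to counting monomials $x^a y^{k-1}$ in the Jacobian ring of $y^d+x^l$ with $l(\beta)=\tfrac{a+1}{l}+\tfrac{k}{d}=1$ via the residue basis of Lemma \ref{lemma:residue}, and then observe that an integral solution $a=l-1-\tfrac{kl}{d}$ exists iff $d\mid kl$. Your explicit $\rho$-eigenvalue computation and the gcd-based range check $a\in\{0,\dots,l-2\}$ are welcome details the paper leaves implicit (the range check also follows immediately from $0<kl/d<l$).
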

\begin{proof}
The $h^{1,1}(V_f)$ of the vanishing cohomology $V_f$ is equal to the cardinality of the set:  
$$
\{
\beta:= (a,k) \in \mathbb{N}_+
\; |\; 
l(\beta) = \frac{a+1}{l}+\frac{k}{d}\in \mathbb{Z}\}.
$$
by Corollary \ref{cor:GrWlb}  and the fact that $\mathrm{Gr}^{W}_2(V_f) = H^{1,1}(V_f)$. 
By definition the weighted degree of the monomials in $\mathcal{B}$
is less than or equal to one, so $l(\beta)< 2$. For a fixed $k$, $h^{1,1}_k(V_f)$
is equal to the cardinality of the set
\begin{align*}
\lbrace
a \in \mathbb{N}_{+} \; | \
l-2 \geq a \geq 0 
\;\; \text{ and } \;\;
\frac{a+1}{l}+\frac{k}{d}=1
\rbrace.
\end{align*}
The equation implies
\[
a = 
l-1-\frac{kl}{d},
\]
so the corresponding $\rho$-eigenperiod satisfies $h_k^{1,1}=1$ or $0$, depends on whether $d|kl$ or not.  
\end{proof}

\subsection{Calculation of Limiting Mixed Hodge Structures}
\label{sec:ExtensionPureLocus}
We recall our setting. Let $\mathcal{I}$ be the set indexing irreducible boundary divisors of $\overline{M}_{0,n}$. For $I\in \mathcal{I}$, let $\left( \mathcal{Y}_{I}, \mathcal{P}_I \right) \to \Delta$ be a one-dimensional family of stable rational curves which central fiber corresponds to a point in the boundary divisor $D_I \subset \overline{M}_{0,n}$, and which generic point is contained in $M_{0,n}$. To define such family is equivalent to give a morphism 
$f_{\mathcal{Y}}:\Delta \to \overline{M}_{0,n}$
because $\overline{M}_{0,n}$ is a fine moduli space. Let 
$R: \overline{M}_{0,n} \to \overline{M}^{\mathrm{GIT}}_{0,\mathbf{\frac{2}{n}}}$ be the reduction morphism. We obtain  a morphism
$R\circ f_{\mathcal{Y}}:\Delta \to 
\overline{M}^{\mathrm{GIT}}_{0,\mathbf{\frac{2}{n}}}$ which has associated a unique family 
$\left( \mathcal{Y}'_{I}, \mathcal{P}'_I \right) \to \Delta$ of GIT semistable closed orbits. 
By Section \ref{sec:hodgetheoryBackground}, we have a one-dimensional family of smooth curves $\mathcal{C} \to \Delta$  such that a generic fiber is a smooth $d$-th cover of $\mathbb{P}^1$. 
The central fiber $\mathcal{C}|_0$ has singularities that are locally analytically isomorphic to one defined by the equation $f(x,y)=y^d + x^l$.

\begin{lemma}\label{lemma:OnlyOneSing}
Let $\mathcal{C} \to \Delta$ be a family of curves as above. Suppose that the central fiber has a non-empty set $S$ of multiple singularities 
$ \s \in S$ and let $V_{f,\s}$ be the local vanishing cohomology group around the singular point 
$ \s \in S$. Then, the following inequality holds:
\begin{equation}\label{eq:IneqMultiple}
    \mathrm{Max}\{\mathrm{dim}(V_{f,\s}^{1,1})| \; \s\in S\}
    \leq h^{1,1}_{\mathrm{lim}}(C_t)\leq \sum_{\s \in S}\mathrm{dim}(V_{f,\s}^{1,1}).
\end{equation}
Moreover, the inequality \eqref{eq:IneqMultiple} stays true after restricting to each $\lambda^k$-eigenspace of $H^{1,1}(V_f)$ and $H^{1,1}_{\mathrm{lim}}(C_t)$. 
\end{lemma}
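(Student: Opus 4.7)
The plan is to extract both inequalities from the $(1,1)$-part of the vanishing cycle sequence of Theorem \ref{vcs} applied to $\mathcal{C}\to\Delta$:
\[
H^1(\mathcal{C}|_0)\to H^1_{\mathrm{lim}}(\mathcal{C}|_t)\to \bigoplus_{\s\in S}V_{f,\s}\xrightarrow{\delta}H^2(\mathcal{C}|_0).
\]
I would apply $\mathrm{Gr}^W_2$ to this exact sequence of mixed Hodge structures, exploiting strictness of MHS morphisms with respect to the weight filtration. Three structural facts enter: (i) $\mathrm{Gr}^W_2 H^1(\mathcal{C}|_0)=0$ since $H^1$ of the semistable curve $\mathcal{C}|_0$ has weights at most $1$; (ii) $\mathrm{Gr}^W_2 H^1_{\mathrm{lim}}(\mathcal{C}|_t)$ is purely of Hodge type $(1,1)$ by the general structure of the LMHS of a family of curves; (iii) $H^2(\mathcal{C}|_0)=\bigoplus_i H^2(C_i)$ is pure of weight $2$ and type $(1,1)$, each irreducible component $C_i$ being smooth and projective with $H^{2,0}=0$. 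Restricting to $(1,1)$-pieces yields the left exact sequence
\[
0\to H^{1,1}_{\mathrm{lim}}(\mathcal{C}|_t)\hookrightarrow \bigoplus_{\s\in S}V_{f,\s}^{1,1}\xrightarrow{\delta^{(1,1)}}H^2(\mathcal{C}|_0),
\]
from which the upper bound $h^{1,1}_{\mathrm{lim}}(\mathcal{C}|_t)\leq \sum_\s\dim V_{f,\s}^{1,1}$ is immediate from injectivity on the left.

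For the lower bound $\max_\s\dim V_{f,\s}^{1,1}\leq h^{1,1}_{\mathrm{lim}}(\mathcal{C}|_t)$, my plan is to first identify $h^{1,1}_{\mathrm{lim}}(\mathcal{C}|_t)$ with $\mathrm{rank}(N)$ via the Clemens--Schmid sequence (Theorem \ref{css}) and the $N$-string isomorphism $N:\mathrm{Gr}^W_2\xrightarrow{\sim}\mathrm{Gr}^W_0$. Then, by Picard--Lefschetz, I would write $N=\sum_{\s\in S}N_\s$, where $N_\s$ records the transvections around the local vanishing cycles at $\s$ and has rank $\dim V_{f,\s}^{1,1}$ on the $(1,1)$-part. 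Since vanishing cycles at distinct singular points can be represented by cycles supported in disjoint Milnor balls, their contributions to $H^1_{\mathrm{lim}}(\mathcal{C}|_t)$ sit in transverse subspaces; this prevents complete cancellation and forces $\mathrm{rank}(N)\geq \max_\s\mathrm{rank}(N_\s)$, which yields the lower bound.

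The eigenspace refinement is automatic because the $A_d$-action $\rho$ commutes with the nearby and vanishing cycle functors, so every step above — strictness, the $\mathrm{Gr}^W_2$ analysis, and the Clemens--Schmid / Picard--Lefschetz arguments — decomposes into $\lambda^k$-eigenspaces. Here Corollary \ref{maincorollary} is convenient, since $\dim V_{f,\s,k}^{1,1}\in\{0,1\}$, so the eigenspace lower bound reduces to the statement that a nontrivial local $(1,1)$-contribution in the $\lambda^k$-eigenspace survives to a nontrivial global $(1,1)$-class. I expect the main obstacle to be this lower bound: ruling out cancellations among local Picard--Lefschetz contributions at different singular points requires a careful topological argument about transversality of the vanishing thimbles, whereas the upper bound and the eigenspace decomposition are direct applications of the vanishing cycle sequence and basic Hodge theory of semistable degenerations.
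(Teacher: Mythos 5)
Your treatment of the upper bound and of the eigenspace refinement matches the paper's: both come down to applying the exact vanishing cycle sequence of Theorem \ref{vcs}, observing that $\mathrm{Gr}^W_2H^1(C_0)=0$ because $H^1$ of the nodal central fiber carries only weights $0$ and $1$ (the paper extracts this from Clemens--Schmid, you assert it directly), and noting that everything is compatible with the $A_d$-action. Whether one phrases this as left-exactness of $\mathrm{Gr}^W_2$ applied to the sequence or as the dimension count
$h^{1,1}(C_0)+\sum_{\s}\dim V_{f,\s}^{1,1}=h^{1,1}_{\mathrm{lim}}(C_t)+h^{1,1}_{\mathrm{ph}}(C_0)$
is immaterial.

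The genuine gap is in your lower bound, and it is exactly where you suspected. The claim that vanishing cycles supported in disjoint Milnor balls have ``transverse'' images in $H^1(C_t)$ is false: local vanishing cycles at distinct singular points can be homologous in the global curve, and the phantom cohomology $H^2_{\mathrm{ph}}(C_0)$ measures precisely these global linear relations among local vanishing cycles. Its nonvanishing is the reason the upper bound is an inequality rather than an equality, so an argument that implicitly assumes the local contributions inject independently into $H^1(C_t)$ cannot be right; in particular $\mathrm{rank}(\sum_\s N_\s)\geq\max_\s\mathrm{rank}(N_\s)$ does not follow from disjointness of supports for general nilpotent summands. What actually rescues the statement is positivity coming from the polarization, and this is the route the paper takes: it realizes the one-parameter degeneration as the deepest stratum of an $|S|$-parameter family $\tilde{\mathcal{C}}\to\Delta^{|S|}$, with local monodromy $\mathbb{Z}^{|S|}$ generated by the $N_\s$, so that $N=\sum_\s N_\s$ lies in the open monodromy cone while each $N_\s$ spans a face. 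The polarized relations among limiting mixed Hodge structures attached to the faces of such a cone (\cite[Sec.\ 1 and 3]{KPR19}) then force the LMHS of the full cone to be at least as degenerate as that of any face, and in weight one this partial order is recorded entirely by the value of $h^{1,1}$, giving $h^{1,1}_{\mathrm{lim}}(C_t)\geq\dim V_{f,\s}^{1,1}$ for every $\s$. If you prefer to stay with Picard--Lefschetz, the correct no-cancellation mechanism is that each $Q(N_\s\,\cdot,\cdot)$ is semi-definite of a consistent sign on $H^1_{\mathrm{lim}}(C_t)$, so that $\ker\bigl(\sum_\s N_\s\bigr)=\bigcap_\s\ker N_\s$; but this semi-definiteness is itself a polarization statement, not a consequence of disjoint supports, so you would still need to supply it.
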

\begin{proof}
By construction the central fiber of $\mathcal{C} \to \Delta$ has singularities of form $y^d + x^{l_i}$ for some $l_i\geq 2$. They appear as the cover of a configuration of points where the marked points $\{p_{j},...,p_{j+l_i-1}\}$ overlap in $\mathbb{P}^1$.
Let $V_{f,s}$ be the local vanishing cohomology around the singular point 
$ \mathfrak{s}\in S$ (and defined in Theorem \ref{Thm:CycleFunctor}.)
In our case, the vanishing cycle sequence \eqref{vcs} is equal to:
\begin{equation}\label{eq:VaniCycleMultiple}
    0\rightarrow 
    H^1(C_0)\xrightarrow{}{}
    H^1_{\mathrm{lim}}(C_t)
    \xrightarrow{}
    \bigoplus_{\s \in S}
    V_{f,\s}\xrightarrow{\delta}H^2_{\mathrm{ph}}(C_0)\rightarrow 0.
\end{equation}
To show the first inequality on the left of \eqref{eq:IneqMultiple}, we use Schmid's nilpotent orbit theorem, polarized relations on nilpotent orbits and associated limiting mixed Hodge structures.  Indeed, we
consider the following $|S|$-dimensional family
\begin{equation}
    \mathcal{\tilde{C}}\rightarrow \Delta^{|S|}, \ z=(z_1,...,z_{|S|})\in \Delta^{|S|}
\end{equation}
with smooth fibers for $z\in (\Delta^{*})^{|S|}$. For $i=1,...,|S|$, the divisor $\{z_i=0\}$ is given by collapsing the $l_i$ points $\{p_{j},...,p_{j+l_i-1}\}$ to a single point. By construction, the central fiber of such a family is given by collapsing $|S|$ tuples of marked points, and the local monodromy group is isomorphic to $\mathbb{Z}^{|S|}$. By \cite[Sec. 3]{KPR19} we obtain the left hand side of \eqref{eq:IneqMultiple} due to the polarized relation of LMHS's summarized in \cite[Sec. 1]{KPR19}. Note that in our weight-$1$ case, the polarized relation on all possible LMHS's is just induced by the values of the $h^{1,1}\in \mathbb{Z}$ and the natural linear relation. 

For the inequality on the right of \eqref{eq:IneqMultiple}, we recall that sequence \eqref{eq:VaniCycleMultiple} is a sequence of MHS, so we count dimensions for the (1,1)-pieces by using additivity of the dimension with respect to that exact sequence.  We obtain
\begin{equation}
    h^{1,1}(C_0) + \sum_{\s \in S}\mathrm{dim}(V_{f,\s}^{1,1})
    = 
     h^{1,1}_{\mathrm{lim}}(C_t) + h^{1,1}_{\mathrm{ph}}(C_0).
\end{equation}
By  Clemens-Schmid in the weight $1$ case, see \eqref{css}, and the fact that $\dim(C_0)=1$ implies $H_{3}(C_0) =0$, we obtain 
\begin{equation*}
0 \xrightarrow[]{}    H^1(C_0)
\xrightarrow{\mathrm{sp}}{}
H^1_{\mathrm{lim}}(C_t)
\xrightarrow{N}H^1_{\mathrm{lim}}(C_t).
\end{equation*}
The fact that $\mathrm{sp}\left(H^1(C_0)\right) = \mathrm{Ker}(N)$ with 
$N:H^{1,1}_{\mathrm{lim}}(C_t) \to H^{0,0}_{\mathrm{lim}}(C_t)$ implies the natural MHS on $H^1(C_0)$ is of Hodge type $(1,0)$, $(0,1)$ and $(0,0)$. Therefore,  $h^{1,1}(C_0) =0$ and  we obtain that
\begin{align*}
\sum_{\s \in S}\mathrm{dim}(V_{f,\s}^{1,1})
    = 
     h^{1,1}_{\mathrm{lim}}(C_t) + h^{1,1}_{\mathrm{ph}}(C_0)
\end{align*}
so our desired inequality in \eqref{eq:IneqMultiple} follows. Notice that  when the singular curve $C_0$ has a connected normalization, 
\begin{equation*}
H^{2}_{\mathrm{ph}}(C_0)^{\vee}\cong \mathrm{coker}\{H_2(C_t)_{\mathrm{lim}}\rightarrow H_2(C_0)\}. 
\end{equation*}
is trivial, hence in this case, we have
\begin{equation*}
    \sum_{\s\in S}\mathrm{dim}(V_{f,\s}^{1,1})
    = 
     h^{1,1}_{\mathrm{lim}}(C_t).
\end{equation*}
To conclude the proof, we recall that the sequence \eqref{eq:VaniCycleMultiple} is an exact sequence of Mixed Hodge Structures, and it holds whenever we restricted to any eigenspace of the character  $\lambda^k$.
Therefore, the inequality  \eqref{eq:IneqMultiple} also holds for every  $\lambda^k$-eigenspace. 
\end{proof}
\begin{lemma}\label{lemma:criteriaPure}
Notation as on Lemma \ref{lemma:OnlyOneSing},  $h_k^{1,1}(V_f) =0$ if and only the LMHS on $H^1_{\mathrm{lim}}(C_t)_k$ is pure. 
\end{lemma}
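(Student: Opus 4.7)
The plan is to show both directions of the equivalence by using the double inequality established in Lemma \ref{lemma:OnlyOneSing}, combined with the standard observation that a weight-$1$ limiting mixed Hodge structure is pure precisely when its $(1,1)$ piece vanishes.

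First I would record the polarized relation: since $N$ is a $(-1,-1)$-morphism and the underlying VHS on $H^1_{\mathrm{lim}}(C_t)_k$ has weight $1$, Schmid's theorem gives the isomorphism $N\colon \mathrm{Gr}^W_2 \xrightarrow{\cong} \mathrm{Gr}^W_0$, so $h^{0,0}_{\mathrm{lim}}(C_t)_k = h^{1,1}_{\mathrm{lim}}(C_t)_k$. Hence the LMHS on $H^1_{\mathrm{lim}}(C_t)_k$ is pure (of weight one) if and only if $h^{1,1}_{\mathrm{lim}}(C_t)_k = 0$. This reduces the claim to the identity
\begin{equation*}
h^{1,1}_k(V_f) = 0 \;\Longleftrightarrow\; h^{1,1}_{\mathrm{lim}}(C_t)_k = 0.
\end{equation*}

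Next I would apply Lemma \ref{lemma:OnlyOneSing} restricted to the $\lambda^k$-eigenspace, which yields
\begin{equation*}
\max_{\s\in S}\bigl\{\dim V_{f,\s,k}^{1,1}\bigr\} \;\leq\; h^{1,1}_{\mathrm{lim}}(C_t)_k \;\leq\; \sum_{\s\in S}\dim V_{f,\s,k}^{1,1} \;=\; h^{1,1}_k(V_f),
\end{equation*}
where the last equality comes from interpreting $V_f$ (as in the notation of Lemma \ref{lemma:OnlyOneSing}) as the direct sum $\bigoplus_{\s\in S} V_{f,\s}$ of local vanishing cohomologies. For the forward direction, $h^{1,1}_k(V_f) = 0$ forces the upper bound to vanish, hence $h^{1,1}_{\mathrm{lim}}(C_t)_k = 0$ and the LMHS is pure. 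For the converse, if the LMHS is pure then $h^{1,1}_{\mathrm{lim}}(C_t)_k = 0$, so the lower bound forces $\dim V_{f,\s,k}^{1,1} = 0$ for every singular point $\s\in S$; summing gives $h^{1,1}_k(V_f) = 0$.

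There is essentially no main obstacle here: the heavy lifting has already been done in Lemma \ref{lemma:OnlyOneSing} (which controls $h^{1,1}_{\mathrm{lim}}$ via the vanishing cycle sequence and Clemens--Schmid) and in the standard fact that $H^1(C_0)$ has no $(1,1)$-part. The only delicate point worth flagging is to make sure that the polarized relation $N^1\colon \mathrm{Gr}^W_2 \cong \mathrm{Gr}^W_0$ remains valid on each individual $\lambda^k$-eigenspace; this is immediate because $N$ commutes with the semisimple monodromy $T^{\mathrm{ss}}$ and thus preserves the eigenspace decomposition, so the $\lambda^k$-eigenspace inherits its own polarized LMHS of weight one.
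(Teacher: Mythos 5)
Your proof is correct and follows essentially the same route as the paper: both rest on the two-sided inequality of Lemma \ref{lemma:OnlyOneSing} restricted to the $\lambda^k$-eigenspace, together with the observation that a weight-one LMHS is pure precisely when its $(1,1)$-piece vanishes (forced by the isomorphism $N\colon \mathrm{Gr}^W_2\xrightarrow{\cong}\mathrm{Gr}^W_0$). The only cosmetic difference is that the paper first reduces to a single singular point and reads the conclusion off the eigenspace-restricted vanishing cycle sequence, whereas you keep all singular points and use the lower (max) bound for the converse direction.
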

\begin{proof}
By Lemma \ref{lemma:OnlyOneSing}, we can suppose that the central fiber of the family $\mathcal{C} \to \Delta$ has a unique singularity which is locally modelled by $y^d + x^l=0$.  By Theorem \ref{Thm:CycleFunctor}, we have that $V_{f}^{0,0}=0$.  We recall that the vanishing cycle sequence within Theorem \ref{vcs} is
compatible with the eigenspace decomposition, so we can restrict them to a fixed eigenspace. We obtain
\begin{equation}\label{eq:VaniLMHS}
0 \xrightarrow[]{}    H^1_k(C_0)
\xrightarrow{\mathrm{}}{}
H^1_{\mathrm{lim}}(C_t)_k
\xrightarrow{\mathrm{}} (V_f)_k
\end{equation}
which can be visualized via Hodge-Deligne diagrams as follows. Therefore, the symmetries of a LMHS imply that 
$H^{1,1}_{\mathrm{lim}}(C_t)_k =\{0\}$ if and only if the LMHS on $H^1_{\mathrm{lim}}(C_t)_k$ is pure. 
\begin{figure}[h!]
\begin{tikzpicture}
\begin{scope}[shift={(0,0)}]
    \node at (-2,0.75) {$0$};
    \draw[->] (-1.5, 0.75) -- (-0.5,0.75); 
    \draw[-,line width=1.0pt] (0,0) -- (0,1.5);
    \draw[-,line width=1.0pt] (0,0) -- (1.5,0);
    \fill (0,1) circle (3pt);
    \fill (0,0) circle (3pt);
    \fill (1,0) circle (3pt);
\end{scope}
\begin{scope}[shift={(4,0)}]
    \draw[->] (-2, 0.75) -- (-1,0.75); 
    \node at (-1.5,1) {$\mathrm{}$};    
    \draw[-,line width=1.0pt] (0,0) -- (0,1.5);
    \draw[-,line width=1.0pt] (0,0) -- (1.5,0);
    \fill (0,1) circle (3pt);
    \fill (1,0) circle (3pt);
    \fill (0,0) circle (3pt);
    \fill (1,1) circle (3pt);
    \draw [-stealth](0.9,0.9) -- (0.2,0.2) [line width = 1.0pt];
\end{scope}    
\begin{scope}[shift = {(8,0)}]
    \draw[->] (-2, 0.75) -- (-1,0.75); 
    \node at (-1.5,1) {$\mathrm{}$};    
    \draw[-,line width=1.0pt] (0,0) -- (0,1.5);
    \draw[-,line width=1.0pt] (0,0) -- (1.5,0);
    \fill (0,1) circle (3pt);
    \fill (1,1) circle (3pt);
    \fill (1,0) circle (3pt);
\end{scope}
\end{tikzpicture}
\end{figure}
\end{proof}

\subsection{Proof of Proposition \ref{prop:extension} and 
Theorem \ref{thm:codimensions}
} 
\begin{proof}[Proof of Proposition \ref{prop:extension}]
Let $\Phi_{n,d,k}:M_{0,n} \to 
\Gamma_{n,d,k} \backslash D_{n,d,k}$ be the period map defined on Proposition 
\ref{prop:EigenPeriodM0n}. 
The complement of $M_{0,n}$ in $\overline{M}_{0,n}$ is the union of divisors denoted as $D_{I}$ with $I\subset \{1, \ldots,n\}$, $2 \leq |I| \leq \lfloor \frac{n}{2} \rfloor$. 
For any of such divisors, we consider its interior 
\[
D_{I}^*
:=
D_{I} \setminus \bigcup_{I \neq J} D_{J}
\]
Let $N_{I}$ be the local monodromy of the lifted period map around $D_{I}$. We label the set of divisors with finite monodromy, that is
\[
\mathcal{I}^{\mathrm{pr}} := 
\{
I \; | \;
N_{I} \; \text{ has finite order}
\}. 
\]
Since $\overline{M}_{0,n}$ is a smooth and
normal crossing compactification of $M_{0,n}$, it is a classical result by Griffiths, see  \cite[Theorem 9.5]{Gri70}, that $\Phi_{n,d,k}$ can be extended uniquely and holomorphically to 
\[
M_{n,d,k}^{\mathrm{pr}}:=M_{0,n}
\bigcup (\cup_{I \in \mathcal{I}^{pr}} D_{}^{*})
\subset
\overline{M}_{0,n}
.
\]
This shows Proposition \ref{prop:extension}.
\end{proof}

\begin{remark}
All the Deligne-Mostow cases can be obtained from $n=8$ and $n=12$ which are known as the ancestral cases \cite{doran2004hurwitz}. For the case $n=12$, we don't need to make a finite base change for the Kato-Usui extension \cite[Thm 1.1]{gallardo2021geometric}.
However, for $n=8$ we need to lift to the ordered case by \cite{HM22}.
\end{remark}

\begin{proof}[Proof of Theorem \ref{thm:codimensions}]
\label{sec:ProorCorollary}

Let $\Delta$ be a $1$-dimensional subfamily of $\overline{M}_{ 0, \mathbf{w} }^{\mathrm{GIT}}$ centered at $p$ such that $\Delta^{*}\subset M_{0,n}$.  The local period map $\Phi_{n,d,k}|_{\Delta^{*}}$ is obtained from a complex local system $\mathcal{V}\rightarrow \Delta^{*}$. This induces a $1$-dimensional subfamily of $\overline{M}_{0,n}$ centered at some $\tilde{p}\in \pi^{-1}(p)$ given by $\pi^{*}\mathcal{V}\rightarrow \pi^{-1}\Delta^{*}$. By assumption these two ($\mathbb{C}$)-variation of Hodge structures are isomorphic via $\pi$, which means the local monodromy of $\pi^{*}\mathcal{V}\rightarrow  \pi^{-1}\Delta^{*}$ around $\tilde{p}$ is also finite.  Therefore, the space $\pi(M_{n,d,k}^{\mathrm{pr}})\subset 
\overline{M}_{ 0,  \mathbf{w}  }^{\mathrm{GIT}}$ 
contains exactly all points $p\in 
\overline{M}_{ 0, \mathbf{w} }^{\mathrm{GIT}}$ such that for any one-dimensional family $\Delta$ centered at $p$ with $\Delta^{*}\subset M_{0,n}$, $\mathcal{V}|_{\Delta^{*}}$ has finite monodromy around the center $p$. Next, we compute separately compute the two functions in Theorem \ref{thm:codimensions} separately.

\noindent\textit{Case 1: $d|n$}:  For $\frac{n}{2} \geq |I| \geq 2$, the morphism $\pi: \overline{M}_{0,n} \to \overline{M}_{0,\mathbf{w}}^{\mathrm{GIT}}$
contracts all divisors $D_{I,I^c}$ with $|I|=l$, and their image 
$\pi\left( D_I \right)$ is of codimension $|I|-1 = (l-1)$  in  $\overline{M}_{0,\mathbf{w}}^{\mathrm{GIT}}$ for $l \leq  \lfloor  \frac{n-1}{2}\rfloor$.
Particularly, if $l=\frac{n}{2}$, this implies $D_I$ is contracted to $0$-dimensional boundary strata.
By Lemma \ref{lemma:criteriaPure} and Corollary  \ref{maincorollary}, the LMHS of our family is pure if and only if $d$ does not divide $kl$. To conclude our argument, we need to find the smallest $l$ such that $d$ divides $kl$. Notice that we also need $l\leq \frac{n}{2}$ for the sake of GIT-semistability. We have three possibilities.

\begin{enumerate}
    
\item When gcd$(k,d)<\frac{2d}{n}$, which means there does not exist an $1<l<\frac{n}{2}$ such that $d|kl$. In this case $\pi(M_{n,d,k}^{\mathrm{pr}})=\overline{M}_{0,\mathbf{w}}^{\mathrm{GIT}}$ (and we use the convention dim$(\emptyset)=-\infty$).

\item When gcd$(k,d)>\frac{2d}{n}$, there exists $0<l=\frac{d}{\mathrm{gcd}(k,d)}<\frac{n}{2}$ such that $d|kl$. In this case codim$(\overline{M}_{0,\mathbf{w}}^{\mathrm{GIT}}\backslash \pi(M_{n,d,k}^{\mathrm{pr}}))=\frac{d}{\mathrm{gcd}(k,d)}-1$.

\item When $n$ is even and gcd$(k,d)=\frac{2d}{n}$, $\overline{M}_{0,\mathbf{w}}^{\mathrm{GIT}}\backslash \pi(M_{n,d,k}^{\mathrm{pr}})$ is exactly the set of cusps and has codimension $(n-3)$.

\end{enumerate}

\noindent\textit{Case 2: $d\nmid n$  and $\mathrm{gcd}(d,n-1)=1$}: Using the weight 
\[
(w_1,...,w_{n-1},w_{n})= \left( \frac{1}{n-1}+\epsilon,...,\frac{1}{n-1}+\epsilon, 1-(n -1) \epsilon \right)
\]
with $0< \epsilon \ll \frac{1}{n^2}$, the complement of the image of $M_{0,n}$ in $\overline{M}^{\mathrm{GIT}}_{\mathbf{w}}/S_{n-1}$ 
can be stratified by the partition of $[n-1]$, namely $S_1\sqcup S_2\sqcup ... \sqcup S_r=\{1,2,...,n-1\}$ with $\mathrm{Max}\{|S_1|,...,|S_r|\}\leq 
n-2$.  More precisely, we have the following analog results:
\begin{enumerate}
    
\item When gcd$(k,d)<\frac{d}{n-2}$, which means there does not exist an $1<l\leq (n-2)$ such that $d|kl$. In this case $\pi(M_{n,d,k}^{\mathrm{pr}})=
\overline{M}_{0,\mathbf{w}}^{\mathrm{GIT}}$ (and we use the convention dim$(\emptyset)=-\infty$).

\item When gcd$(k,d)>\frac{d}{n-2}$, there exists $0<l=\frac{d}{\mathrm{gcd}(k,d)}< (n-2)$ such that $d|kl$. In this case codim$(\overline{M}_{0,\mathbf{w}}^{\mathrm{GIT}}\backslash \pi(M_{n,d,k}^{\mathrm{pr}}))=\frac{d}{\mathrm{gcd}(k,d)}-1$.

\item When gcd$(k,d)=\frac{d}{n-2}$, for $l=(n-2)$ it holds that $d | kl$. Then, the only divisors where the LMHS may not be pure is $D_{I}$ with $I^c = \{ i,n \}$. Then, $\pi(D_I)$ parametrizes configurations where the $(n-2)$ points $\{ p_i | i \notin I \}$ collide. This has codimension $(n-3)$. 
\end{enumerate}
\end{proof}

\section{Hodge-theoretic interpretation of exceptional boundary components}

In this section, we focus on the proof of Theorem \ref{thm:KU},
which describes the Hodge theoretical extension of the period map introduced in the previous sections. 

\subsection{Preliminaries and Notation}
Let $\phi_{n,d,k}$ be the eigenperiod map defined in Proposition
\ref{prop:eigenperiod}. 
Let $\Delta \subset \overline{M}_{0,n}$ be a one-parameter smoothing of a point in the boundary. That is $0 \in \Delta$ is a point in $\overline{M}_{0,n} \setminus M_{0,n}$ and  $\Delta^*\subset M_{0,n}$. By the construction on Section \ref{sec:hodgetheoryBackground}, we have a one-dimensional family of generically smooth curves $\mathcal{C}_d \to \Delta$ that 
induces a variation of Hodge Structures. Denote the monodromy of this local VHS as $\langle T \rangle \leq \Gamma_{n,d,k}$, and $N:= \mathrm{log}(T)$.  After a finite base change, we can assume $\langle T\rangle$ is unipotent. 
The local period map $\Delta^* \to  \langle T\rangle \backslash D_{n,d,k} $  extends to a space of nilpotent orbits or limiting mixed Hodge structures (LMHS) associated to $N$, 
see \cite{KU08}. More precisely,
\begin{equation}
\xymatrix{
\Delta  \ar@{->}[r]  
&   \langle T\rangle \backslash D_{n,d,k} \sqcup \langle T\rangle \backslash B(N)
\ar@{->>}[d]
\\
&  \langle T\rangle \backslash D_{n,d,k} \sqcup \langle T\rangle \backslash B^0(N)
}
\end{equation}
where we denote $\check{D}_{n,d,k}$ as the compact dual of $D_{n,d,k}$, and
\begin{align*}
B(N) &
:=e^{\langle N\rangle_{\mathbb{C}}}\backslash \{F^{\bullet}\in \check{D}_{n,d,k} \;|\;(N,F^{\bullet}) \text { is a nilpotent orbit } \},
\\    
B^0(N) &:=\{F(\mathrm{Gr}(W_N))^{\bullet}\in \prod \mathrm{Gr}(D^W_{n,d,k})|(N, F^{\bullet})\in B(N)\}
\end{align*}
with $W_N$ being the Jacobson-Morozov filtration on $\mathrm{Lie}(U(r,s))$ associated to $N$, and $D^W_{n,d,k}$ being the period domain of $N$-polarized pure Hodge structures on graded quotients, and
\[
F^p(\mathrm{Gr}(W_N )) = F^p\cap W_N(H)_{\mathbb C}/F^p\cap W_{N-1}(H)_{\mathbb C}
\]
Where $H$ is the underlying $\mathbb{Q}$-vector space of the Hodge structure.

To interpret such an extension, we recall that 
$B(N)$ parametrizes all $N_{\mathbb{C}}$-nilpotent orbits, or say all limiting Mixed Hodge structures polarized by $N$ up to rescaling, with fixed weight filtration $W_N$, and that the natural map $B(N)\twoheadrightarrow B^0(N)$
sends an LMHS's parametrized by $B(N)$ to their graded quotients. 

Moreover, since we are working with polarized Hodge structures of weight one, the limiting mixed Hodge structure of the central fiber can be described as either pure or of type I , see Figure \ref{lmhsfigure}.
If $T$ is not finite, $N$ gives a type I degeneration. 
In our cases, the type of LMHS is uniquely determined by the weight filtration.
In that case, 
we denote the $\mathbb{Q}$-parabolic subgroup corresponds to the weight filtration as $P_W$,  its unipotent radical $U_W$, the complex parabolic corresponds to a fixed base point 
$F^{\bullet}\in D$ as $P_{F^{\bullet}}$, and the centralizer of $N$ as $Z(N)$.
By \cite[chap. 7]{KP16}, the boundary components parametrizing our LMHS are
\begin{align}\label{hodgebdcomp}
 B(N)
&\cong e^{\langle N\rangle_{\mathbb{C}}} \backslash Z(N)\cap(U_W(\mathbb{C})\rtimes P_W(\mathbb{R}))/Z(N)\cap \mathrm{Stab}_{F^{\bullet}}(\mathbb{C})   
\\
B^0(N)
& \cong B(N)/Z(N)\cap U_W(\mathbb{C})
\notag
\end{align}

\begin{remark}
In \cite{KP16}, by $B(N)$, they meant a single connected component, or $Z(N)$-orbit as $N$ may polarize LMHS of different types. In our weight $1$ case, since $Z(N)$ acts transitively on $B(N)$, therefore the two definitions agree. Examples, where $B(N)$ contains multiple $Z(N)$-orbits parametrizing LMHS with different Hodge numbers, could appear only in higher weights.
\end{remark}

\begin{center}\label{lmhsfigure}
\begin{figure}
\begin{tikzpicture}[scale=0.7]
\begin{scope}[shift ={(0,0)}]
    \draw[-,line width=1.0pt] (0,0) -- (0,3.5);
    \draw[-,line width=1.0pt] (0,0) -- (3.5,0);
\fill (3,0) circle (3pt);
\fill (0,3) circle (3pt);

\node at (3,-0.5) {$r$};
\node at (-0.5,3.5) {$s$};
\end{scope}
\begin{scope}[shift ={(7,0)}]
\draw[-,line width=1.0pt] (0,0) -- (0,3.5);
\draw[-,line width=1.0pt] (0,0) -- (3.5,0);
\fill (0,0) circle (3pt);
\fill (3,0) circle (3pt);
\fill (0,3) circle (3pt);
\fill (3,3) circle (3pt);
\node at (-0.3,-0.3) {$a$};
\node at (3.3,3.3) {$a$};
\node at (-1,3.5) {$s-a$};
\node at (3,-0.5) {$r-a$};
\node at (1.1,1.9) {$N$};
\draw [-stealth](2.8,2.8) -- (0.2,0.2) [line width = 1.0pt];
\end{scope}
\begin{scope}[shift ={(14,1.5)}]
\draw[-,line width=1.0pt] (0,0) -- (5,0);
\draw[-,line width=1.0pt] (2.5,2.5) -- (2.5,-2.5);
\fill (2.5,0) circle (3pt);
\fill (2.5,2) circle (3pt);
\fill (4.5,2) circle (3pt);
\fill (4.5,-2) circle (3pt);
\fill (4.5,0) circle (3pt);
\fill (2.5,-2) circle (3pt);
\fill (0.5,-2) circle (3pt);
\fill (0.5,0) circle (3pt);
\fill (0.5,2) circle (3pt);
\draw [-stealth](2.3,1.8) -- (0.7,0.2) [line width = 1.0pt];
\draw [-stealth](4.3,1.8) -- (2.7,0.2) [line width = 1.0pt];
\draw [-stealth](2.3,-0.2) -- (0.7,-1.8) [line width = 1.0pt];
\draw [-stealth](4.3,-0.2) -- (2.7,-1.8) [line width = 1.0pt];
\end{scope}
\end{tikzpicture}
\centering
\caption{Hodge-Deligne diagram for boundary type zero (left), type I (center) and the adjoint Hodge-Deligne diagram for type I degeneration (right)}
\end{figure}
\end{center}

\subsection{Proof of Theorem
\ref{thm:KU}(i)}\label{sec:ProofThm1.1.ii}

The case when the domain $D_{n,d,k}$ is a complex ball is proved in \cite[Sec. 3.3]{gallardo2021geometric}. 
Their arguments still hold for general signatures $(r,s)$ if we only try to extend the eigenperiod map to the codimension $1$ boundary strata. We describe the proof next.  Choose a neat subgroup $\Gamma^{\mathrm{nm}}_{n,d,k}\leq \Gamma_{n,d,k}$ of finite index. The choice of a neat subgroup induces a finite cover of  
$\overline{M}_{0,n}$ which we normalize and denote as 
$\overline{M}^{\mathrm{nm}}_{0,n}$.  
By construction, $\overline{M}^{\mathrm{nm}}_{0,n}$ has a singular locus of codimension larger than or equal to two. Then, by Kato-Usui's theory \cite[Thm. B]{KU08}, we have a unique extension  to the partial compactification
\begin{align*}
 \Phi_{n,d,k}^{ \mathrm{nm} }:
 \overline{M}^{\mathrm{nm}}_{0,n}
 \dashrightarrow
 \left( \Gamma^{\mathrm{nm}}_{n,d,k}\backslash D_{n,d,k} \right)
  \bigsqcup_{ N\in \Gamma^{\mathrm{nm}}_{n,d,k} \backslash \Sigma}(\Gamma_N\backslash B(N))
\end{align*}
 where the fan $\Sigma$ is constructed by taking the monodromy logarithm around each divisor and their $\mathrm{Ad}(\Gamma^{\mathrm{nm}}_{n,d,k})$-conjugates. This fan is clearly $\Gamma^{\mathrm{nm}}_{n,d,k}$-strongly compatible and thus gives rise to a Kato-Usui type extension to codimension $1$ strata.

\subsection{Proof of Theorem \ref{thm:KU}(ii)}\label{sec:ProofTorelli}
We describe the geometry of the map $\mathcal{R}$. Let $[n]=\{1,2,...,n=2r\}$ and  fix $I\subset [n]$ such that $|I|=r$. Let's recall a generic point $p \in \mathcal{D}^{\circ}
\subset M^{\circ}_{0,\mathbf{\frac{2}{n}} +
\epsilon}$ parametrizes a stable curve $(X, \sum_{i=1}^{2r} \left( \frac{1}{2} + \epsilon \right)x_i)$  such that
$X=X_1\cup X_2\cong \mathbb{P}^1\cup \mathbb{P}^1$ with  $X_1\cap X_2= q$. The points $x_i \in X_1$ for $i \in I$ and $x_i \in X_2$ for $i \in [n]\setminus I$. These points are mutually distinct and away from $X_1 \cap X_2$.
There is a natural reduction map:
\begin{align}\label{eq:ksbareductionmap}
\mathcal{R}:&  \mathcal{D}^{\circ}\rightarrow M_{0,r}/S_r\times M_{0,r}/S_r,
&&
\mathcal{D}^{\circ} \subset
\overline{M}_{0,\frac{2}{n}+\epsilon}^{\circ}
\end{align}
induced by sending the configuration on each component $X_i$ to the corresponding element in $M_{0,r}/S_r$, that is
\begin{align}\label{eq:R}
\left(
X, \sum_{i=1}^{2r} \left( \frac{1}{2} + \epsilon \right)x_i \right) 
\mapsto
\left( 
\mathbb{P}^1, \sum_{i=1}^{r} x_i
\right) , 
\left( \mathbb{P}^1, \sum_{i=r+1}^{2r} x_i \right).
\end{align}
 For any $x\in M_{0,r}/S_r\times M_{0,r}/S_r$, the fiber $\mathcal{R}^{-1}(x)$ is a $2$-dimensional subfamily given by fixing all $x_i$ and varying the gluing point on each component.

\begin{figure}[h!]
\centering
\includegraphics[width=0.5\textwidth]{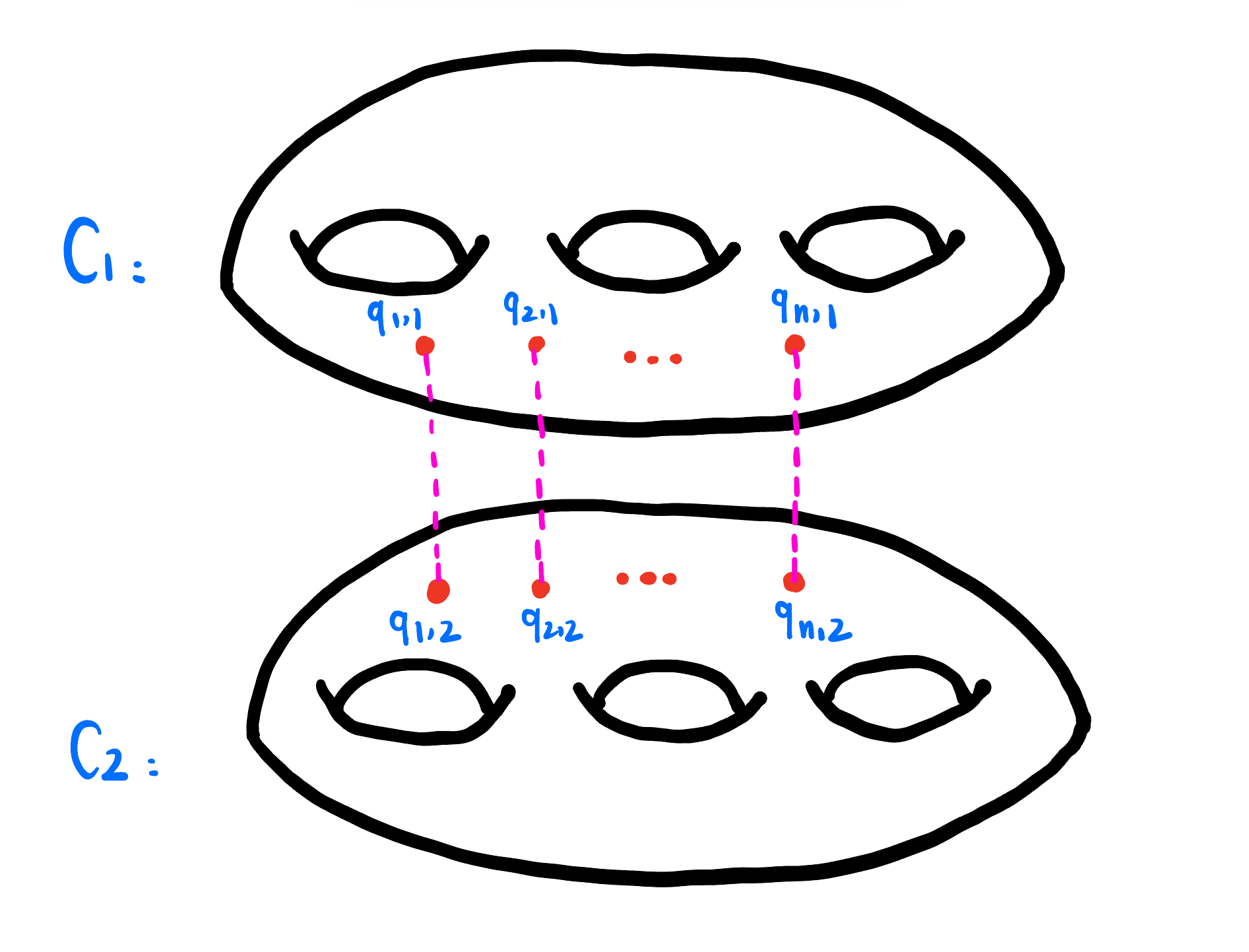}
\caption{Branched cover $C$ and its normalization}
\label{figureksba2}
\end{figure}

Let 
$\Delta\subset \overline{M}_{0,\mathbf{\frac{2}{n}} + \epsilon} $ 
be a one-parameter family such that $\Delta^*\subset M_{0,n}$ and $0\in \mathcal{D}^{\circ}$. As described in Section \ref{sec:hodgetheoryBackground}, when $d|r$ we have a family of curves $\mathcal{C} \to \Delta$ such that $\mathcal{C}|_{t=0}$ is cyclic $d$-cover of the nodal curve $X$ branched along $\{x_1,...,x_n\}$.  Let $C$ be the normalization of $\mathcal{C}|_{t=0}$. By construction, $\mathcal{C}|_{t=0}$ is the union of two components as $C_1\cup C_2$. Both $C_i$ are curves of genus $\frac{(d-1)(r-2)}{2}$ with $C_1\cap C_2=\{q_1,...,q_d\}$ as the set of nodes.
Notice that there are $r$ branched points on each component $X_i$ that define the map $C_i \mapsto X_i$. We identify the (disconnected) curve $C$ with its normalization $C_1\sqcup C_2$ and let $q_{j,1} \in C_1$ and $q_{j,2} \in C_2$ be the points identified with $q_i$ by the normalization map. The divisor 
$q_i:=q_{1,i}+...+q_{d,i}$ in $C_i$ is $\rho$-invariant - recall $\rho$ is the automorphism of $C$ associated to the finite cover $\mathcal{C} \mapsto X$, see Figure \ref{figureksba2}.

By Theorem \ref{prop:extension}.(ii), there exist a finite cover a
 $M^{\mathrm{nm},\circ}_{0,\mathbf{\frac{2}{n}} +
\epsilon} \longrightarrow
M^{\circ}_{0,\mathbf{\frac{2}{n}} +
\epsilon}
$ such that the period map extends to it. 
Let  $\mathcal{D}^{\mathrm{nm},\circ}$ be a connected component of the divisor within 
$M^{\mathrm{nm},\circ}_{0,\mathbf{\frac{2}{n}} + \epsilon}$ that maps to $\mathcal{D}^{\circ}$.
The extended period map $ \phi_{n,d,k}^{\circ}$ is 
defined within $\mathcal{D}^{\mathrm{nm},\circ}$, so we describe it next.

By using the Clemens-Schmid and normalization sequences together, see \cite[Sec 3.b]{Car80}, we obtain
the exact diagram of mixed Hodge structures 
given by Figure \ref{fig:ExactDiagram}.
\begin{figure}[h!]
\centering
\begin{tikzcd}
& & & &0 \arrow{d}\\
& & & &H_1(C_1)_k\bigoplus H_1(C_2)_k\arrow{d} \\
0\arrow{r} & H^1(C)_k \arrow{r} & H^1_{\mathrm{lim}}(C_t)_k \arrow[r, "N"] & H^1_{\mathrm{lim}}(C_t)_k \arrow{r} & H_1(C)_k\arrow{r} \arrow{d} &0\\
& & & & H^{1,1}_{\mathrm{lim}}(C_t)_k\arrow{d}\\
& & & &0
\end{tikzcd} 
\caption{Exact diagram of LMHS associated to $C$}
\label{fig:ExactDiagram}
\end{figure}

The diagram in Figure \ref{fig:ExactDiagram} implies that
\begin{equation}
H_1(C_1)_k\bigoplus H_1(C_2)_k\cong \mathrm{Gr}_1H^1_{\mathrm{lim}}(C_t)
\end{equation}
and since $\mathcal{C}|_{t=0}$ is nodal, $H^{1,1}_{\mathrm{lim}}(C_t)_k$ is described by the Abel-Jacobi map of certain non-closed paths in its normalization $C$ mapped to cycles in $\mathcal{C}|_{t=0}$ via the identification of $q_{j,1}$ and $q_{j,2}$.

Since our result is a local statement that is irrelevant to finite base change, we can directly work on the original map $\mathcal{R}$ locally - instead of the lifted map
$\mathcal{R}^{\mathrm{nm}}$. Denote $N_0$ as the local monodromy logarithm corresponds to $\mathcal{D}$. It suffices to check a local $2$-parameter family:
\begin{equation}
\Phi: \Delta\times \Delta\rightarrow B(N_0)    
\end{equation}
for $\mathbf{N}:=(N_1,N_2)\in \Delta\times \Delta$. Since moving the intersection points $C_1 \cap C_2$ (i.e., the pinched nodes) does not change the normalized curve, we have: 
\begin{equation}
\Phi^0: \Delta\times \Delta\rightarrow  B^0(N_0)
\end{equation}
is constant, and the period map only varies among the extension data with fixed graded quotient, i.e a fiber of the natural map. 
\begin{equation}
  (W_{N_0}=:W_0, F^{\bullet}, N_0)\in B(N_0)\twoheadrightarrow (\mathrm{Gr}(W_{0}), F^{\bullet}, N_0)\in B^0(N_0).
\end{equation}

Take $F_0^{\bullet}$ such that $(N_0, F_0^{\bullet})=\Phi(0,0)$. It then follows that $\mathrm{Img}(\Phi)\subset \pi^{-1}(\Phi^0(0,0))$ where 
$\pi: B(N)\twoheadrightarrow B^0(N)$, 
or more precisely, combine with formula \ref{hodgebdcomp}, we have:
\begin{equation}
\mathrm{Img}(\Phi)\subset e^{\langle N_0\rangle_{\mathbb{C}}} \backslash Z(N_0)\cap U_{W_0}(\mathbb{C}) /Z(N_0)\cap P_{F_0^{\bullet}}(\mathbb{C})
\end{equation}

This implies the map $\Phi$ is totally determined by its image in $I^{-1,0}$, where $\mathrm{Lie(U}(r,s))=\oplus_{p,q}I^{p,q}(=I^{p,q}_{W_0, F_0^{\bullet}})$ is the Deligne splitting of the adjoint diagram \ref{lmhsfigure} (recall that $I^{-1,-1}$ is modded out by $N_0$. $I^{-1,1}$ lies in the Levi subgroup and is irrelevant along any $\pi$-fiber.)

Working on the dual homology sequence, the previous discussion shows the extension class is determined by the image under Abel-Jacobi map of some element in $H^0(\{q_{j,i}\})[-1], i=1,2$. Several things to be noticed are:

\begin{enumerate}
    \item The normalized curve $C$ is not path-connected in our case, therefore the path should be split into $2$ components with endpoints identified under the normalization map;
    \item The "$\rho$-eigen"-extension data should come from (the Abel-Jacobi image of) paths coming from normalizing $\rho$-eigen $1$-cycles.
    \item The standard Abel-Jacobi map only gives a class in the extension data modulo the lattice of integral homology cycles. To define a class in the affine space, we have to find a locally canonical construction.
\end{enumerate}

We fix a component $C_1$. Since any finite cover does not change the local behavior, we consider the local extended period map of the ordered case, i.e., we associate the points a fixed order $\{x_1,...,x_r\}\subset \mathbb{P}^1$ and denote the corresponding branched points in $C$ as $\{p_1,...,p_r\}$. By applying a local $\mathrm{SL}_2$-(constant) section of linear transformation we can assume $(x_1,x_2,x_3)=(0,1,\infty)\in (\mathbb{P}^1)^3$. For the ordered set of pinched points $\{q_{j,1}\}$ with $\rho^l(q_{j,1})=N_{j+l,1}$, $q_{l+d,1}:=q_{l,1}$, we choose a simple path from $0$ to $q_{1,1}$, denoted as $\gamma^1_{q_1}$. If $q_{1,1}$ varies in a contractible neighborhood, we assume the path $\gamma^1_{q_1}$ also varies in a contractible tubular neighborhood. 
Let $\zeta_d=e^{\frac{2\pi i}{d}}$ be the $d$-root of unity and consider the following two elements in the relative homology group $H_1(C_1,\{q_{i,1}\};\mathbb{Z})$:
\begin{center}
$\gamma^1_+:=\gamma^1_{q_1}+\zeta_d^{-1}\rho \gamma^1_{q_1}+...+\zeta_d^{-(d-1)}\rho^{d-1} \gamma^1_{q_1}$
\\

$\gamma^1_-:=\gamma^1_{q_1}+\zeta_d^{}\rho \gamma^1_{q_1}+...+\zeta_d^{(d-1)}\rho^{d-1} \gamma^1_{q_1}$
\end{center}
It is clear $\rho(\gamma^1_{\pm})=\zeta_d^{\pm 1}\gamma^1_{\pm}$. If we do a similar procedure on $C_2$ to obtain $\gamma^2_{\pm}$.
Let $C_1 \sqcup C_2  \to C$ be the normalization map, the image
 of $\gamma^1_{\pm}-\gamma^2_{\pm}$ will give a well-defined element in $H_1(C)_{\pm k}$.

Given $\gamma^1_{\pm}$ and a differential $1$-form $\omega \in H^1(C_1)_k$, by the symmetry in Lemma \ref{eqnholodiff}, we have:
\begin{equation}
    H^1(C_1)_k \xrightarrow[]{\cong} \Omega^1(C_1)_{\pm k},
\end{equation}
where $\omega$ is sent to $\omega$ (resp. $\bar{\omega}$) if $\omega$ is holomorphic (resp. anti-holomorphic). We have the natural map
$\omega \mapsto \int_{\gamma^1_{\pm}} \omega$. 
In the construction of $\gamma^1_{\pm}$,  we choose the point $q_{1,1}$ and a path $\gamma^1_{q_1}$, they define maps 
\begin{equation}
q_{1,1}\xrightarrow[]{\gamma^1_{q_1}} \gamma^1_{\pm} \xrightarrow[]{\tau} \Omega^1(C_1)_{k=\pm 1}^{\vee}.
\end{equation}
For a fixed $q_{1,1}$, if we select two different paths 
$\gamma^{1}_{q_1}$ and $\hat{\gamma}^{1}_{q_1}$ varying in the same contractible tubular neighborhood, the resulting image of $\tau$ in $\Omega^1(C_1)_{k=\pm 1}^{\vee}$ will not be changed.
So to prove the theorem it remains to show $\tau$ is locally injective, because this is the construction that defines the Abel-Jacobi map. 
We only need to prove the following statement: For some $\omega\in \Omega^1(C_1)_{k=\pm 1}$, when varying $q_{1,1}$ in a contractible neighborhood $U_1 \subset C_1$, the function:
\begin{align*}
f_{\omega}: U_1 \longrightarrow \mathbb{C},
&&
f_{\omega}(q_{1,1}):=\int_{\gamma^1_{q_1}}\omega     
\end{align*}
is locally injective. This is because
\begin{equation}
\int_{\gamma_+^1}\omega = \sum_{j=0}^{d-1}\zeta_d^{-j}\int_{\rho^{j-1}\gamma_{q_{1}}^1}\omega = \sum_{j=0}^{d-1}\zeta_d^{-j}\zeta_{d}^{j}\int_{\gamma_{q_1}^1}w = d\int_{\gamma_{q_1}^1}\omega.
\end{equation}
In other words, we need to show for any point $q_{1,1}$ there exists some $\omega\in \Omega^1(C_1)_{k=\pm 1}$ such that $\omega(q_{1,1})\neq 0$. This will imply $f_{\omega}$ is conformal around any $q_{1,1}$.

By Lemma \ref{eqnholodiff}, $\Omega^{1}(C_1)_{-1}$ is popularized by:
\begin{equation}
\frac{z^{j-1}dz}{y^{d-1}} \ , \  
0 <j/r\neq (d-1)/d<1
\end{equation}
In our case where we assume $d|r$, $r\geq 3$ and $k=1$, by checking the formula \cite[(3.2)]{McM13}, the only possible zero for $dz/y^{d-1}\in \Omega^{1}(C_1)_{-1}$ is $\widetilde{\infty}$, the preimage of $\infty$ under the branched covering map $C_1\rightarrow \mathbb{P}^1$, while $\widetilde{\infty}$ is not a zero of the form $z^{\frac{n}{d}-1}dz/y^{d-1}\in \Omega^{1}(C_1)_{-1}$. Therefore, $\Omega^1(C_1)_{k=\pm 1}$ is basepoint-free. We finish the proof of the theorem by showing the same statement on the component $C_2$.

\bibliographystyle{alpha}
\bibliography{references}

\end{document}